\documentclass[a4paper,11pt,leqno]{amsart}

\usepackage[dvips]{graphics}
\usepackage[matrix,arrow,tips,curve]{xy}
\usepackage[english]{babel}
\usepackage{amsmath}
\usepackage{amssymb,amsthm}

\usepackage{mathrsfs}

\usepackage{enumerate}
\usepackage{psfrag}
 \psfrag{y}{\fontsize{20}{20}$y$}
 \psfrag{C0}{\fontsize{20}{20}$C_0=G_Y\cap \mathbb{F}$}
 \psfrag{p1}{\fontsize{20}{20}$p_1$}
 \psfrag{p2}{\fontsize{20}{20}$p_2$}
 \psfrag{q1}{\fontsize{20}{20}$q_1$}
 \psfrag{q2}{\fontsize{20}{20}$q_2$}
 \psfrag{C}{\fontsize{20}{20}$C_1$}
 \psfrag{G1}{\fontsize{20}{20}$\wi{G}_Y\cap \mathbb{F}$}
\psfrag{Y}{\fontsize{20}{20}$Y$}
\psfrag{X}{\fontsize{20}{20}$X$}
\psfrag{n}{\fontsize{20}{20}$\sigma$}

\psfrag{f}{\fontsize{20}{20}$\longrightarrow$}
\psfrag{G}{\fontsize{20}{20}$G$}
\psfrag{Gh}{\fontsize{20}{20}$\wi{G}$}
\psfrag{Gy}{\fontsize{20}{20}$G_Y$}
\psfrag{Ghy}{\fontsize{20}{20}$\wi{G}_Y$}
\psfrag{E}{\fontsize{20}{20}$E$}
\psfrag{Eh}{\fontsize{20}{20}$\wi{E}$}
\psfrag{s}{\fontsize{20}{20}$\sigma(\wi{E})=\pi^{-1}(A)$}
\psfrag{A}{\fontsize{20}{20}$A_Y$}


\newtheorem{thm}{Theorem}[section]
\newtheorem*{thm*}{Theorem}
\newtheorem{lemma}[thm]{Lemma}
\newtheorem{proposition}[thm]{Proposition}
\newtheorem{corollary}[thm]{Corollary}

\newtheorem{construction}[thm]{Construction}

\theoremstyle{definition}

\newtheorem{remark}[thm]{Remark}

\newtheorem{example}[thm]{Example}
\newtheorem{parg}[thm]{}

\newtheorem{pargtwo}{}[thm]

\renewcommand{\theequation}{\thethm}

\newcommand{\ph}{\varphi}
\newcommand{\w}{\widetilde}
\newcommand{\wi}{\widehat}
\newcommand{\pr}{\mathbb{P}}
\newcommand{\Q}{\mathbb{Q}}
\newcommand{\R}{\mathbb{R}}
\newcommand{\N}{\mathcal{N}_1}
\newcommand{\ol}{\mathscr{O}}
\newcommand{\Nu}{\mathcal{N}^1}

\newcommand{\Obs}{\operatorname{Obs}}
\newcommand{\Spec}{\operatorname{Spec}}
\newcommand{\Sing}{\operatorname{Sing}}
\newcommand{\NE}{\operatorname{NE}}
\newcommand{\Exc}{\operatorname{Exc}}
\newcommand{\Lo}{\operatorname{Locus}}

\newcommand{\Hilb}{\operatorname{Hilb}}
\newcommand{\Chow}{\operatorname{Chow}}
\newcommand{\Pic}{\operatorname{Pic}}

\newcommand{\Sec}{\operatorname{Sec}}
\newcommand{\Proj}{\operatorname{Proj}}
\newcommand{\Sym}{\operatorname{Sym}}
\newcommand{\Rat}{\operatorname{RatCurves}^n}

\newcommand{\Hom}{\textup{Hom}}
\newcommand{\Ext}{\textup{Ext}}

\newlength{\Mheight}
\newlength{\cwidth}
\settoheight{\Mheight}{\footnotesize M}
\settowidth{\cwidth}{c}

\title[]{Locally unsplit families of rational curves of large anticanonical degree on Fano manifolds}
\author[]{Cinzia Casagrande}
\address{Cinzia Casagrande:
 Universit\`a di Torino,
 Dipartimento di Matematica,
via Carlo Alberto 10,
 10123 Torino - Italy}
\email{cinzia.casagrande@unito.it}

\author[]{St{\'e}phane Druel}
\address{St{\'e}phane Druel:
Institut Fourier,
UMR 5582 du CNRS,
Universit\'e Grenoble 1, BP 74,
38402 Saint Martin d'H\`eres - France}
\email{druel@ujf-grenoble.fr}
\date{January 3, 2015}
\subjclass[2010]{14J45, 14C05, 14E30}

\begin{document}
\maketitle

\begin{abstract}
In this paper we address Fano manifolds of dimension $n\ge 3$
with a locally unsplit dominating family  of rational curves of anticanonical degree $n$. We first observe that their 
Picard number is at most $3$, and then we provide a classification of all cases with maximal Picard number.
We also give examples of locally unsplit dominating families of rational curves whose varieties of minimal tangents at a general point are singular.
\end{abstract}

{\small\tableofcontents}

\section{Introduction}
\noindent 
Let $X$ be a Fano manifold, and let $V$ be a dominating family of rational curves  on $X$. By this we mean that $V$ is an irreducible component of 
 $\Rat(X)$, the scheme parametrizing 
integral rational curves on $X$, and that the union of the curves parametrized by $V$ is dense in $X$.

We say that $V$ is \emph{locally unsplit} if for general $x\in X$, the subfamily $V_x$ of curves containing $x$ is proper.  This is true, for instance, if $V$ is a dominating family with minimal degree with respect to some ample line bundle on $X$.

When $V$ is locally unsplit, the anticanonical degree of the curves of the family can vary between $2$ and $n+1$, where $n$ is the dimension of $X$.

Following Miyaoka \cite{miyaokaQ},
we define $l_X$
to be the minimal anticanonical degree of a locally unsplit dominating family of rational curves in $X$, so that $l_X\in\{2,\dotsc,n+1\}$. Equivalently, $l_X$ is the minimal anticanonical degree of a free rational curve in $X$ (see Remark \ref{deflX}). 

In this paper we study 
 Fano manifolds
 with a locally unsplit dominating family  of rational curves of anticanonical degree $n$, including in particular Fano manifolds $X$  with $l_X=n$.

Let us recall the following results, due respectively to Cho, Miyaoka, and Shepherd-Barron
(see also \cite{kebekus}), and to Miyaoka.
\begin{thm}[\cite{cho}]\label{CMSBK}
Let $X$ be a Fano manifold of dimension $n$. The following properties are equivalent:
\begin{enumerate}[$(i)$]
\item
 $X$ has a locally unsplit dominating family of rational curves of maximal anticanonical degree $n+1$;
\item
 $X\cong\pr^n$.
\end{enumerate}
 In particular, $l_X=n+1$ if and only if $X\cong\pr^n$.
\end{thm}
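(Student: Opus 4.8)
The plan is to prove the stated equivalence, the ``in particular'' clause then being immediate from the general bound $l_X\le n+1$ recalled above: if $l_X=n+1$ then any locally unsplit dominating family has anticanonical degree exactly $n+1$, which is condition $(i)$, while conversely the family of lines on $\pr^n$ shows $l_X=n+1$ there. The implication $(ii)\Rightarrow(i)$ is the easy one: on $\pr^n$ the lines form a dominating family, they are unsplit because an integral degree-one rational curve cannot degenerate within $\Rat(\pr^n)$ into a reducible cycle, and $-K_{\pr^n}\cdot\ell=n+1$. So the whole content lies in $(i)\Rightarrow(ii)$.

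For $(i)\Rightarrow(ii)$ I would fix a general point $x\in X$ and study $V_x$, the subfamily of curves through $x$, which is proper by local unsplitness. Standard deformation theory gives $\dim_{[C]}V_x=-K_X\cdot C-2=n-1$, and that a general member is a standard rational curve $f\colon\pr^1\to X$ with $f^*T_X\cong\ol_{\pr^1}(2)\oplus\ol_{\pr^1}(1)^{\oplus(n-1)}$; in particular $f^*T_X$ is ample, exactly as for a line in $\pr^n$. Next I would bring in the tangent map $\tau_x\colon V_x\dashrightarrow\pr(T_xX)\cong\pr^{n-1}$, sending a curve smooth at $x$ to its tangent direction there. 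By Kebekus' results \cite{kebekus} on locally unsplit families, for general $x$ this map is a finite morphism onto its image $\mathcal{C}_x$, the variety of minimal rational tangents. Since $\dim V_x=n-1=\dim\pr(T_xX)$, finiteness forces $\tau_x$ to be surjective, so $\mathcal{C}_x=\pr(T_xX)$. I would then upgrade this to an isomorphism: the crucial point is that for general $x$ no two distinct members of $V_x$ are tangent at $x$, so $\tau_x$ is birational; being a finite birational morphism onto the smooth, hence normal, target $\pr^{n-1}$, Zariski's main theorem makes it an isomorphism $V_x\cong\pr^{n-1}$.

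Finally I would reconstruct $X$ from this family. Over $V_x\cong\pr^{n-1}$ the universal family $\mathcal{U}_x$ is a $\pr^1$-bundle carrying the tautological section $\Sigma$ over the marked point $x$, and the evaluation morphism $e\colon\mathcal{U}_x\to X$ contracts $\Sigma$ to $x$. Using the ampleness of $f^*T_X$ together with the injectivity of $\tau_x$ to control $e$, I would show that $e$ realizes $\mathcal{U}_x$ as the blow-up $\w{X}=\mathrm{Bl}_xX$, with $\Sigma$ the exceptional divisor. Then $\w{X}$ is simultaneously a $\pr^1$-bundle over $\pr^{n-1}$ and carries a section contractible to a smooth point; this is precisely the geometry of $\mathrm{Bl}_x\pr^n$, and identifying the two structures forces $X\cong\pr^n$.

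The main obstacle is the global control of the evaluation map $e$. Surjectivity of $\tau_x$ and ampleness of the restricted tangent bundle only give good behaviour near $\Sigma$; the delicate step is to prove that $e$ is birational with no positive-dimensional fibres other than $\Sigma\to x$, equivalently that two distinct curves of $V_x$ cannot meet again away from $x$. This is exactly where the extremal value $-K_X\cdot C=n+1$ is essential: a dimension count shows that through two general points of $X$ there pass only finitely many curves of the family, and it is this rigidity, combined with Kebekus' tangency analysis, that rules out the spurious identifications and secures the $\pr^1$-bundle structure.
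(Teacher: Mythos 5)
First, a point of comparison: the paper does not prove this statement at all. It is the Cho--Miyaoka--Shepherd-Barron characterization of $\pr^n$, quoted as a known black box (citations \cite{cho} and \cite{kebekus}), and it is then used repeatedly as an input in the paper's own arguments. So your proposal is not an alternative to a proof in the paper; it is an attempt to reprove a deep external theorem, and it must be judged on its own completeness.

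Judged that way, what you have is an outline of Kebekus' strategy with the two hardest steps asserted rather than proved. The first is birationality of $\tau_x$: your claim that ``for general $x$ no two distinct members of $V_x$ are tangent at $x$'' is not elementary -- it is essentially the Hwang--Mok birationality theorem \cite{hwang_mok_birationality} (which, note, postdates Kebekus' proof), and you give no argument for it; without it, finiteness plus equality of dimensions only makes $\tau_x$ a finite cover of $\pr^{n-1}$ of unknown degree. (A smaller unproved claim of the same kind: that the general member is standard, which is what you need for ampleness of $f^*T_X$; freeness and $-K_X\cdot C=n+1$ alone allow splitting types such as $\ol_{\pr^1}(2)^{\oplus 2}\oplus\ol_{\pr^1}(1)^{\oplus (n-3)}\oplus\ol_{\pr^1}$.) The second and decisive gap is the one you flag yourself: showing that the evaluation map $e\colon\mathcal{U}_x\to X$ is the blow-up of $X$ at $x$, i.e.\ that two distinct curves of $V_x$ never meet away from $x$ and that each curve is smooth and embedded. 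Your proposed remedy -- ``a dimension count shows that through two general points there pass only finitely many curves, and this rigidity combined with Kebekus' tangency analysis rules out the spurious identifications'' -- does not do this: finiteness of the set of curves through two general points is compatible with that finite number being $\geq 2$, and with distinct members of $V_x$ meeting at special second points, which is exactly what must be excluded. This exclusion is the technical core of the Cho--Miyaoka--Shepherd-Barron/Kebekus proof (it occupies the bulk of \cite{kebekus}), so as it stands the proposal has a genuine gap precisely at the heart of the theorem.
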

\begin{thm}[\cite{miyaokaQ}]\label{M}
Let $X$ be a Fano manifold of dimension $n\geq 3$, and with Picard number $\rho_X=1$. Then $l_X=n$ if and only if
$X$ is isomorphic to a quadric.
\end{thm}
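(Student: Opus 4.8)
The plan is to prove the two implications separately: the forward implication (quadric $\Rightarrow l_X=n$) is elementary, while the converse rests on the geometry of the minimal family, and I will phrase it through the variety of minimal rational tangents.

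First I would dispose of the easy direction. If $X$ is a smooth quadric $Q\subset\pr^{n+1}$ with $n\ge 3$, then $\Pic(X)=\mathbb{Z}\,H$ with $H=\mathscr{O}_Q(1)$, and adjunction gives $-K_X=nH$, so the Fano index is $n$. The lines on $Q$ form a dominating family $V$; a line $\ell$ is free because $T_X|_\ell\cong\mathscr{O}(2)\oplus\mathscr{O}(1)^{\oplus(n-2)}\oplus\mathscr{O}$ is globally generated, and $-K_X\cdot\ell=n$. Since every curve $C$ on $Q$ satisfies $-K_X\cdot C=n\,(H\cdot C)\ge n$, no free rational curve can have anticanonical degree $<n$, so $l_X=n$. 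Properness of $V_x$ for general $x$ is clear, as the lines of $Q$ through a point form a closed subvariety.

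For the converse, assume $\rho_X=1$ and fix a locally unsplit dominating family $V$ of anticanonical degree $n$, which exists since $l_X=n$. For a general point $x$ the subfamily $V_x$ is proper of dimension $-K_X\cdot C-2=n-2$, and by Kebekus' theorem the tangent map $\tau_x\colon V_x\to\pr(T_xX)\cong\pr^{n-1}$ is finite onto its image, the variety of minimal rational tangents $\mathcal{C}_x$. Hence $\mathcal{C}_x$ is a hypersurface in $\pr^{n-1}$. Because $\rho_X=1$, the VMRT $\mathcal{C}_x$ is linearly nondegenerate in $\pr(T_xX)$ (otherwise the curves of $V$ would define a proper distribution and force a fibration or a proper covered locus, contradicting $\rho_X=1$); moreover, since $V$ has minimal degree, the general member through $x$ is a standard rational curve, so $T_X|_C\cong\mathscr{O}(2)\oplus\mathscr{O}(1)^{\oplus(n-2)}\oplus\mathscr{O}$.

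The heart of the matter, and the step I expect to be the main obstacle, is to show that $\mathcal{C}_x$ is a smooth quadric hypersurface. The idea is that the splitting type above pins down the infinitesimal geometry of $\mathcal{C}_x$: the embedded tangent space and the projective second fundamental form of $\mathcal{C}_x$ at a general point $[\,T_xC\,]$ are read off from the $\mathscr{O}(2)\oplus\mathscr{O}(1)^{\oplus(n-2)}$ part, while the single trivial summand $\mathscr{O}$ should force the second fundamental form to be that of a smooth quadric. Carrying this out rigorously — controlling the degree of $\mathcal{C}_x$ and ruling out higher-degree or singular hypersurfaces — is the delicate part; equivalently, one must show that the minimal curves have $H$-degree $1$, i.e.\ that the Fano index $r_X$ equals $n$. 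Once $\mathcal{C}_x$ is known to be a smooth quadric of dimension $n-2$ at the general point, a Cartan--Fubini type recognition theorem for Fano manifolds of Picard number $1$ (Hwang--Mok) identifies $X$ with the model whose VMRT is a smooth quadric, namely $Q^n$; alternatively, and more classically, the quadric VMRT yields $r_X=n$, and the Kobayashi--Ochiai characterization of the quadric by its index gives $X\cong Q^n$ directly.
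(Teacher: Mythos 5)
A preliminary remark: the paper offers no proof of this statement to compare against --- Theorem \ref{M} is quoted from \cite{miyaokaQ} and used as a black box --- so your proposal must stand on its own. Your forward direction (quadric $\Rightarrow l_X=n$) is correct and complete. The converse, however, contains a genuine gap, and it sits exactly where you flag it: you never prove that $\mathcal{C}_x$ is a smooth quadric, equivalently that the curves of $V$ have degree $1$ against the ample generator $H$ (which would give $-K_X=nH$, hence index $n$, after which Kobayashi--Ochiai together with the exclusion of $\mathbb{P}^n$ by Theorem \ref{CMSBK} finishes the argument). Everything you write after that point is legitimate, but everything before it is a reformulation of the problem rather than a proof: the entire content of Miyaoka's theorem is this one step, and announcing it as ``the delicate part'' leaves the theorem unproved.

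Moreover, the mechanism you propose for closing the gap cannot work as stated. The splitting type $T_X|_C\cong\mathscr{O}_{\mathbb{P}^1}(2)\oplus\mathscr{O}_{\mathbb{P}^1}(1)^{\oplus(n-2)}\oplus\mathscr{O}_{\mathbb{P}^1}$ holds for a general member of \emph{any} locally unsplit dominating family of anticanonical degree $n$ on \emph{any} $n$-fold; it encodes only that $\tau_x$ is an immersion at such a member (Remark \ref{standard}) and that $\mathcal{C}_x$ is a hypersurface, and it carries no information about the projective second fundamental form or the degree of $\mathcal{C}_x$. This very paper refutes your heuristic: for the varieties of Example \ref{ex}, the general member of $V$ has exactly this splitting type (Lemma \ref{free}), yet the VMRT is an irreducible hypersurface of degree $\binom{d}{a}$, which for $2\le a\le d-2$ is even singular (Theorem \ref{tau_x}). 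Those examples have $\rho_X=3$, so they are consistent with Miyaoka's theorem, but they show that the hypothesis $\rho_X=1$ must enter the argument in an essentially global way --- for instance through the ampleness of the divisor $\Lo(V_x)$ for general $x$, which holds automatically when $\rho_X=1$ and is exactly what fails in those examples, where one only has $\dim\N(\Lo(V_x),X)=1$ (Remark \ref{locuns}) --- and not merely through local differential geometry along a single curve plus linear nondegeneracy. (Your nondegeneracy parenthesis is likewise only sketched, but that one is repairable: a degenerate $\mathcal{C}_x$ would be a linear $\mathbb{P}^{n-2}$, and Araujo's theorem \cite{carolina06} would then produce a $\mathbb{P}^{n-1}$-bundle structure on an open subset of $X$, incompatible with $\rho_X=1$.)
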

On the other hand, there are also cases where
  $l_X=n$ and $\rho_X>1$.
\begin{example}[\cite{miyaokaQ}, Remark~4.2]\label{exampleM}
Let $A\subset \pr^n$ be a smooth subvariety, of dimension $n-2$ and degree $d\in\{1,\dotsc,n\}$, contained in a hyperplane. Let $X$ be the blow-up of $\pr^n$ along $A$. Then $X$ is Fano with $\rho_X=2$ and $l_{X}=n$. The locally unsplit dominating family of rational curves of anticanonical degree $n$ is given by the strict transforms of lines intersecting  $A$ in one point.
\end{example}
First of all, we show that in fact these are the only examples.
\begin{thm}\label{result}
Let $X$ be a Fano manifold of dimension $n\geq 3$, with $\rho_X>1$ and 
 $l_X=n$. Then there exists a smooth subvariety  $A\subset \pr^n$ of dimension $n-2$ and degree $d\in\{1,\dotsc,n\}$, contained in a hyperplane, such that
 $X$ is isomorphic to the blow-up of $\pr^n$ along $A$.
\end{thm}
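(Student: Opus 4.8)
The plan is to realise $X$ as the blow-up described in Example~\ref{exampleM} by producing the blow-down as an elementary contraction and identifying its target through Theorem~\ref{CMSBK}. Fix a minimal locally unsplit dominating family $V$, so that $-K_X\cdot C=l_X=n$ for a general $C\in V$. Deformation theory gives $\dim V=-K_X\cdot C+n-3=2n-3$ and, as $V_x$ is proper for general $x$, $\dim V_x=-K_X\cdot C-2=n-2$; hence $\Lo(V_x)$ is a prime divisor $D_x\subset X$ and the variety of minimal tangents $\mathcal{C}_x\subset\pr(T_xX)\cong\pr^{n-1}$ is a hypersurface of dimension $n-2$. The whole argument is driven by the hypothesis $l_X=n$: the manifold $X$ has no free rational curve of anticanonical degree $<n$, so neither a proper fibre of a fibre-type contraction nor a proper subvariety covered by a dominating subfamily can carry a locally unsplit dominating family of degree $<n$.

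Next I would run through the elementary contractions of $X$, using the a priori bound $\rho_X\le 3$ to keep the bookkeeping finite, and show that one of them realises the desired blow-down. For a contraction of fibre type $\varphi\colon X\to Y$, a general fibre $F$ of dimension $n-\dim Y$ inherits a locally unsplit dominating family from $V$ (one first checks that $V$ is vertical for $\varphi$); since such a family on $F$ has degree at most $\dim F+1\le n$, degree $n$ forces $\dim F=n-1$, and then Theorem~\ref{CMSBK} gives $F\cong\pr^{n-1}$, so $\varphi$ is a $\pr^{n-1}$-bundle over $Y\cong\pr^1$—the blow-up of $\pr^n$ along a linear $\pr^{n-2}$, i.e. the $d=1$ instance of Example~\ref{exampleM}. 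Small contractions are excluded by an analysis of $\Exc(\varphi)$ via Wi\'sniewski's inequality $\dim\Exc(\varphi)+\dim F\ge n+\ell(R)-1$ together with the divisorial family $\{D_x\}$. This leaves the divisorial contractions, which form the main case.

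For the divisorial case, let $\sigma\colon X\to Z$ contract an irreducible divisor $E$ onto $\sigma(E)$. Wi\'sniewski's inequality and Ando's classification of divisorial contractions whose non-trivial fibres have dimension $\le 1$ should force $\sigma$ to be the blow-up of the smooth variety $Z$ along a smooth subvariety $A_Z=\sigma(E)$ of codimension two, with the curves of $V$ meeting $E$ transversally so that $E\cdot C=1$. Then $K_X=\sigma^*K_Z+E$ gives $-K_Z\cdot\sigma_*C=-K_X\cdot C+E\cdot C=n+1$; since $\sigma$ is an isomorphism near a general point, $\sigma_*V$ is a locally unsplit dominating family on the Fano manifold $Z$ of maximal anticanonical degree $n+1$, so $Z\cong\pr^n$ by Theorem~\ref{CMSBK}. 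Therefore $X$ is the blow-up of $\pr^n$ along the smooth $(n-2)$-dimensional center $A:=A_Z$, in particular $\rho_X=2$, and the members of $V$ are the strict transforms of the lines meeting $A$ in one point. Finally, the Fano condition on $X$—a positivity statement for $N_{A/\pr^n}$—together with the minimality of $n$ forces $\deg A=d\in\{1,\dots,n\}$ and exhibits a hyperplane $\pr^{n-1}\subset\pr^n$ containing $A$ (detected as the image of the complementary elementary contraction of $X$), completing the identification with Example~\ref{exampleM}.

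The step I expect to be the main obstacle is the structural one: extracting the divisorial blow-down and proving that it is a \emph{smooth} blow-up of a codimension-two center, rather than a small or a more degenerate divisorial contraction. This is exactly where $l_X=n$—as opposed to the mere existence of a degree-$n$ locally unsplit dominating family—is indispensable, via the absence of low-degree free curves and the resulting length and dimension estimates; it is also where the bound $\rho_X\le 3$ and a simultaneous control of all extremal rays are needed, so that after blowing down one genuinely lands on a single Fano manifold to which Theorem~\ref{CMSBK} can be applied, and so that the complementary ray produces the hyperplane through $A$.
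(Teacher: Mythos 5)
Your overall skeleton (produce a blow-down to a smooth variety, identify the target as $\pr^n$ via Theorem \ref{CMSBK}, then pin down the center) is indeed the paper's strategy, but the step you yourself flag as the main obstacle is a genuine gap, and your claim there is false as stated. You assert that Wi\'sniewski's inequality and Ando's classification force \emph{any} divisorial elementary contraction $\sigma\colon X\to Z$ to be a smooth blow-up along a smooth codimension-two center. Ando-type results apply only to contractions whose non-trivial fibers have dimension $\le 1$, and the manifolds in the theorem's own conclusion violate your claim: if $X$ is the blow-up of $\pr^n$ along $A\subset H$ with $\deg A=d\ge 2$, then the transform $\widetilde{H}\cong\pr^{n-1}$ of the hyperplane $H$ has normal bundle $\ol_{\pr^{n-1}}(1-d)$, and the contraction of the second extremal ray sends $\widetilde{H}$ to a (singular) point --- a divisorial contraction with an $(n-1)$-dimensional fiber that is not a smooth blow-up. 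So one must first \emph{select} the correct ray, and that selection is precisely the missing idea. The paper (Lemma \ref{uno}) takes $D:=\Lo(V_x)$, which satisfies $\dim\N(D,X)=1$ (Remark \ref{locuns}), picks an extremal ray $R$ with $D\cdot R>0$, proves $R\not\subset\N(D,X)$, and deduces that the contraction of $R$ is finite on $D$, hence has fibers of dimension $\le 1$; only then does \cite[Theorem 1.2]{wisn} give the dichotomy ``conic bundle or smooth codimension-two blow-up'', with the conic bundle excluded by $l_X=n>2$. The same lemma is what eliminates $\rho_X=3$ (that case forces a conic bundle structure) and makes any separate discussion of small or fiber-type rays unnecessary; your exclusions of those cases are unsupported assertions (for instance, nothing guarantees that $V$ is vertical for a fiber-type contraction --- that case can be rescued, but only by a different argument, using that general fibers are Fano and contain curves free in $X$, hence of degree $\ge l_X$).

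Two later steps are also asserted rather than proved, and both genuinely need $l_X=n$ in ways your sketch does not supply. First, $E\cdot C=1$: to exclude $E\cdot C=0$ one must show that a locally unsplit dominating family of anticanonical degree $n$ on the target is impossible, which the paper proves by observing that such curves would have to be disjoint from the center $A$, while their locus through a general point is an \emph{ample} divisor (this uses $\rho_Y=1$, so the Picard-number analysis must already be complete). Note that the paper avoids pushing $V$ forward altogether: it takes a \emph{new} minimal locally unsplit family $W$ on $Y$, shows $-K_Y\cdot[W]\ge n$ from $l_X=n$ (Remark \ref{deflX}), excludes equality by the ampleness argument, and concludes $-K_Y\cdot[W]=n+1$, whence $Y\cong\pr^n$. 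Second, the hyperplane containing $A$: this does \emph{not} follow from the Fano condition together with the complementary contraction --- the blow-up of $\pr^3$ along a twisted cubic is Fano with non-degenerate center (and has $l_X=2$). What forces degeneracy is again $l_X=n$: a line secant to $A$ has transform of anticanonical degree at most $n-1$, so $\Sec(A)\neq\pr^n$, and a smooth codimension-two subvariety with deficient secant variety is degenerate by \cite[Theorem 3.4.26]{lazI}.
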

Together with Miyaoka's result (Theorem \ref{M}), this 
gives a complete classification of Fano manifolds with $l_X=n$.

\medskip

Then we turn to the case where $X$ is a Fano manifold  having a locally unsplit dominating family  of rational curves of anticanonical degree $n$, where $n\geq 3$ is the dimension of $X$. Notice that this assumption is easier to check than the condition $l_X=n$, as it involves only one family of rational curves.

In the toric case, these varieties have been classified by Chen, Fu, and Hwang.
\begin{proposition}[\cite{fuhwang}, Proposition 3.8]
Let $X$ be a toric Fano manifold, of dimension $n\geq 3$, having a locally unsplit dominating family  of rational curves of anticanonical degree $n$. Then $X$ is one of the following:
\begin{enumerate}[1)]
\item  the blow-up of $\pr^n$ at a linear $\pr^{n-2}$ (here $\rho=2$ and $l_X=n$);
\item
 $\pr^1\times\pr^{n-1}$ (here $\rho=2$ and $l_X=2$); 
\item the blow-up of $\pr^n$  at $A\cup\{p\}$, where $A$ is a linear 
$\pr^{n-2}$, and $p$ a point not in $A$ (here $\rho=3$ and $l_X=2$).
\end{enumerate}
\end{proposition}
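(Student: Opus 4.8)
The plan is to use the degree-$n$ family $V$ to exhibit $X$ as a fibration over $\pr^1$ with general fiber $\pr^{n-1}$, and then to classify the possible total spaces by toric Fano combinatorics.

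First, set $\beta=[V]\in N_1(X)$. Since $V$ is dominating, a general member $C$ is a free curve contained in no torus-invariant prime divisor $D_i$, so $D_i\cdot\beta\ge 0$ and $\sum_iD_i\cdot\beta=-K_X\cdot\beta=n$. For general $x$ the subfamily $V_x$ is proper of dimension $-K_X\cdot\beta-2=n-2$, so the curves of $V$ through $x$ sweep out a subvariety $W_x$ of dimension $n-1$; as $V$ is locally unsplit every such curve has class $\beta$, and through a general point of $X$ there passes a unique member $W_x$. Thus the $W_x$ form an $(n-1)$-dimensional covering family and define an almost holomorphic map $X\dashrightarrow B$ to a curve $B$, which contracts $\beta$. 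In particular $\beta$ lies on $\partial\overline{\NE}(X)$; letting $\pi\colon X\to Y$ be the contraction of the minimal face of $\overline{\NE}(X)$ containing $\beta$, its fibers contain the $W_x$, so $\dim Y\le 1$, while $\beta\in\partial\overline{\NE}(X)$ gives $\dim Y\ge 1$. Hence $Y\cong\pr^1$ and, $X$ being smooth, $\pi$ is flat and equidimensional with $(n-1)$-dimensional fibers.

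Next I would identify the general fiber. Let $F=\pi^{-1}(y)$ with $y$ general; it is a smooth irreducible toric variety of dimension $n-1$ with trivial normal bundle $N_{F/X}\cong\ol_F$, since $\dim Y=1$. Adjunction gives $-K_F=-K_X|_F$, which is ample, so $F$ is Fano, and the curves of $V$ contained in $F$ satisfy $-K_F\cdot\beta=-K_X\cdot\beta=n=\dim F+1$. Restricting $V$ to $F$ yields a locally unsplit dominating family of rational curves of maximal anticanonical degree $\dim F+1$, so Theorem \ref{CMSBK} forces $F\cong\pr^{n-1}$, with $V|_F$ the family of lines. Thus $\pi\colon X\to\pr^1$ is a fibration with general fiber $\pr^{n-1}$ whose associated line family is $V$.

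Finally I would classify such $X$. Combinatorially $\pi$ is a surjection $N\to\mathbb{Z}$ whose kernel carries the fan of $\pr^{n-1}$, the remaining rays being horizontal. If $\rho_X=2$ the relative Picard number is $1$, so $\pi$ is a projective bundle $\pr(\ol^{\oplus(n-1)}\oplus\ol(a))$ over $\pr^1$; a direct anticanonical computation shows it is Fano exactly for $a\in\{0,1\}$, giving $\pr^1\times\pr^{n-1}$ and the blow-up of $\pr^n$ along a linear $\pr^{n-2}$ (cases 2 and 1). If $\rho_X\ge 3$ the extra horizontal rays create degenerate fibers; analyzing them under smoothness and strict convexity of the support function of $-K_X$ should show that the Fano condition permits at most one extra ray — whence $\rho_X\le 3$ — and that the unique degenerate fiber is the blow-up of $\pr^{n-1}$ at a torus-fixed point. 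Reconstructing the fan then identifies $X$ with the blow-up of $\pr^n$ along $A\cup\{p\}$, with $A$ a linear $\pr^{n-2}$ and $p\notin A$ (case 3). Reading off the minimal dominating families yields $l_X=n,2,2$ respectively. The step I expect to be the main obstacle is exactly this last classification: bounding the horizontal rays to obtain the sharp inequality $\rho_X\le 3$ from convexity, and matching the fan in the case $\rho_X=3$ precisely with $\mathrm{Bl}_{A\cup\{p\}}\pr^n$. More routine, but still needing care, are the verification that $X\dashrightarrow B$ coincides with a genuine extremal contraction with reduced general fiber (so that the adjunction in the second step applies) and the Fano computation $a\in\{0,1\}$; both are standard for toric Mori dream spaces.
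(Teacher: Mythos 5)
Your proposal stands or falls with the sentence ``through a general point of $X$ there passes a unique member $W_x$'': the map to $\pr^1$, the claim $\beta\in\partial\overline{\NE}(X)$, and everything after it are downstream of this, and the only justification you offer (that all curves of $V$ have class $\beta$) does not give it. This is exactly the point where the toric hypothesis must do real work, and the claim is false without it. Take the paper's Example~\ref{exampleM} (the case classified by Theorem~\ref{result}): let $\sigma\colon X\to\pr^n$ be the blow-up of a smooth $A$ of degree $d$, $2\le d\le n$, contained in a hyperplane, with exceptional divisor $E$, and let $V$ be the family of transforms of lines meeting $A$ in one point --- a locally unsplit dominating family of anticanonical degree $n$. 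Here $W_x=\Lo(V_x)$ is the transform of the cone over $A$ with vertex $\sigma(x)$, of class $d\,\sigma^*H-E$, so $W_x\cdot[V]=d-1>0$; for general $y$ the divisors $W_x$ with $x\in W_y$ are pairwise distinct and all pass through $y$; and $[V]=(\sigma^*\ell-d\,f)+(d-1)\,f$ (with $f$ a fiber of $E\to A$) exhibits $[V]$ as lying in the \emph{interior} of $\NE(X)$, so no contraction of $X$ contracts the curves of $V$ and the rc-$V$ quotient is a point, just as for the quadric. When $d=1$ all the cones through $y$ coincide with the hyperplane spanned by $\sigma(y)$ and $A$; your uniqueness claim amounts to that coincidence, which is a special geometric fact, not a formal one. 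You also cannot recover the claim from chain-connectedness theorems: results such as \cite[Corollary IV.4.21]{kollar} require the relevant families of curves to be proper (unsplit), whereas local unsplitness controls only curves through general points --- indeed the example above has $\rho_X=2$ and yet any two general points of it are joined by a chain of two curves from $V$. So a correct proof along your lines must first establish $W_x\cdot[V]=0$ by an argument that genuinely uses toricness; this is in substance what the toric analysis of \cite{fuhwang} provides, and it is precisely what the paper's own machinery (Remark~\ref{locuns}, Lemma~\ref{uno}, Proposition~\ref{divisorial}) is designed to avoid assuming, since it works only with $\dim\N(W_x,X)=1$ and extremal contractions.

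The rest of the argument also has gaps, which you partly acknowledge. Granting the fibration $\pi\colon X\to\pr^1$, your identification of the general fiber via Theorem~\ref{CMSBK} is correct, and parallels the argument in case $(i)$ of the proof of the paper's Lemma~\ref{contraction}. But for $\rho_X=2$ you assert $X\cong\pr_{\pr^1}\bigl(\ol_{\pr^1}^{\oplus(n-1)}\oplus\ol_{\pr^1}(a)\bigr)$, while a priori $X\cong\pr_{\pr^1}\bigl(\bigoplus_{i=1}^n\ol_{\pr^1}(a_i)\bigr)$; one must normalize $\min_i a_i=0$ and use $-K_X\cdot C_0=2-\sum_i a_i>0$ on the minimal section $C_0$ to force $\sum_i a_i\le 1$ --- fixable, but missing as written. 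More seriously, the entire case $\rho_X\ge 3$ --- the sharp bound $\rho_X\le 3$ and the matching of the fan with that of the blow-up of $\pr^n$ along $A\cup\{p\}$ --- is exactly what you defer with ``should show''. Together with the first gap, this is where all the content of \cite[Proposition 11]{fuhwang} lies (the paper itself gives no proof, quoting Fu--Hwang), so what you have is a plausible strategy rather than a proof.
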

We  show that if $X$ has a locally unsplit dominating family $V$ of rational curves of anticanonical degree $n$, then the Picard number of $X$ is at most $3$ (see Proposition \ref{prop_main}). Moreover we classify all cases with $\rho_X=3$, giving a complete description of $X$ and $V$. 
Let us describe our results.

We first construct and study a family of examples.
\begin{example}\label{ex}
Fix integers $n,a$ and $d$ such that  
$n\geq 3$, $d\geq 1$ 
 and
$0\leq a\leq d$. Let moreover $A\subset \pr^{n-1}$ be a smooth hypersurface of degree $d$.

Set $Y:=\pr_{\pr^{n-1}}(\ol_{\pr^{n-1}}\oplus\ol_{\pr^{n-1}}(a))$, and
let $\wi{G}_Y\cong\pr^{n-1}\subset Y$ 
be a section of the $\pr^1$-bundle $Y\to\pr^{n-1}$ with normal bundle 
 $\mathcal{N}_{\wi{G}_Y/Y}\cong\ol_{\pr^{n-1}}(a)$.

Finally set $A_Y:=\wi{G}_Y\cap\pi^{-1}(A)$ (so that $A_Y\cong A$), and let $\sigma\colon X\to Y$ be the blow-up of $A_Y$.

Then $X$ is smooth of dimension $n$ and Picard number $3$, and it is Fano if and only if $a\leq n-1$ and $d-a\leq n-1$. In the Fano case, these varieties appear in \cite{toru}, where the author classifies Fano manifolds containing a divisor 
$G\cong\pr^{n-1}$ and with negative normal bundle.
\end{example}
\begin{proposition}\label{examples}
Let $X$ be as in Example \ref{ex}. Then
 $X$ has a locally unsplit dominating family $V$ of rational curves of anticanonical degree $n$.
\end{proposition}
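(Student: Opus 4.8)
The plan is to exhibit $V$ explicitly as the family of strict transforms of suitable sections of the $\pr^1$-bundle $\pi\colon Y\to\pr^{n-1}$. Write $G_Y\cong\pr^{n-1}$ for the section of $\pi$ with $\mathcal N_{G_Y/Y}\cong\ol_{\pr^{n-1}}(-a)$ (the ``negative'' section, disjoint from $\wi G_Y$), and for a line $\ell\subset\pr^{n-1}$ set $\mathbb F:=\pi^{-1}(\ell)$, a Hirzebruch surface $\mathbb F_a$ whose negative section is $C_0:=G_Y\cap\mathbb F$ and whose fibres are fibres of $\pi$. Since $A_Y\subset\wi G_Y$ lies over $A$, the intersection $A_Y\cap\mathbb F$ consists of the $d=\deg A$ points over $A\cap\ell$, all lying on the positive section $\wi G_Y\cap\mathbb F$. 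I would define $V$ as the family of strict transforms $\wi C$ of sections $C\subset\mathbb F$ of $\pi|_{\mathbb F}$ that are numerically equivalent to a line contained in $\wi G_Y$ (equivalently $C\equiv C_0+a\cdot(\text{fibre})$ on $\mathbb F$) and pass through exactly $a$ of the $d$ points of $A_Y\cap\mathbb F$; here the hypothesis $a\le d$ is precisely what guarantees that $a$ such points are available.

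First I would check the anticanonical degree. By adjunction on $\wi G_Y\cong\pr^{n-1}$, using $\mathcal N_{\wi G_Y/Y}\cong\ol(a)$, one gets $-K_Y|_{\wi G_Y}\cong\ol(n+a)$, so a section $C$ as above, being numerically a line in $\wi G_Y$, has $-K_Y\cdot C=n+a$. As $\sigma$ is the blow-up of the smooth codimension-two centre $A_Y$, one has $-K_X=\sigma^*(-K_Y)-E$; since a general such $C$ meets $A_Y$ transversally in its $a$ prescribed points, $E\cdot\wi C=a$ and hence
\[
-K_X\cdot\wi C=(-K_Y\cdot C)-E\cdot\wi C=(n+a)-a=n.
\]
The general $C$ is a smooth rational curve meeting $A_Y$ transversally, so $\wi C\cong\pr^1$ is a smooth rational curve of anticanonical degree $n$.

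Next I would verify that $V$ dominates $X$. As $\sigma$ is an isomorphism over $Y\setminus A_Y$, it suffices to produce a member through a general $y\in Y$. Fixing a general line $\ell\ni\pi(y)$, the sections $C\equiv C_0+a\cdot(\text{fibre})$ form a linear system of dimension $a+1$ on $\mathbb F$; imposing passage through $y$ and through $a$ of the $d$ points of $A_Y\cap\mathbb F$ is $a+1$ conditions, which for general $\ell$ is satisfied by some $C$. Thus a member of $V$ passes through a general point of $X$, so $V$ is a dominating family, and (being dominating on the smooth $X$, in characteristic zero) its general member is free; let $V$ also denote the corresponding irreducible component of $\Rat(X)$.

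The main point is local unsplitness: I must show $V_x$ is proper for general $x$. Every member, and every limit $1$-cycle $Z$ of members, has the fixed numerical class $[\wi C]$, so $\psi_*Z$ is a line $\ell_0$ for $\psi:=\pi\circ\sigma$, and $E\cdot Z=a$; moreover $Z$ is connected, hence contained in $\psi^{-1}(\ell_0)$, so $\sigma_*Z$ is an effective curve of class $C_0+a\cdot(\text{fibre})$ on the Hirzebruch surface $\mathbb F:=\pi^{-1}(\ell_0)$. If $Z$ were reducible or non-reduced, then $\sigma_*Z$ would be too, and on $\mathbb F_a$ the only such curves of this class consist of $C_0$ together with $a$ fibres. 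For $Z$ to pass through a general point $x$, with $y:=\sigma(x)\notin G_Y$ and $\pi(y)=:x'\notin A$, the curve $\sigma_*Z$ must contain the fibre through $y$, namely $\pi^{-1}(x')$; but $x'\notin A$ means this fibre does not meet $A_Y$, so the condition $E\cdot Z=a$ forces $a$ further fibres lying over $A\cap\ell_0$, giving at least $a+1$ fibres in a class containing only $a$ — a contradiction. Hence no reducible or non-reduced cycle of $V$ passes through a general $x$, so $\overline{V_x}=V_x$ is proper and $V$ is locally unsplit. I expect this degeneration analysis — pinning down that the only possible breakings are $C_0$ plus fibres, and ruling them out through a general point by the count $E\cdot Z=a$ — to be the crux; the degree computation and domination are comparatively routine.
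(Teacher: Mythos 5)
Your construction is essentially the paper's own (Construction \ref{construction}): the same sections in $|C_0+af|$ on $\mathbb{F}=\pi^{-1}(\ell)$ through $a$ points of $A_Y\cap\mathbb{F}$, the same degree computation, and domination as in the paper. But the step you yourself identify as the crux --- local unsplitness --- has a genuine gap. The implication ``if $Z$ is reducible or non-reduced, then $\sigma_*Z$ is too'' is false: components of $Z$ contracted by $\sigma$, i.e.\ fibers of the $\pr^1$-bundle $E\to A_Y$, vanish under push-forward, so $Z$ can be reducible while $\sigma_*Z$ is integral. Such cycles genuinely exist in the class $[V]$: by the relation \eqref{equiv} one has $C_{\wi{G}}+(d-a)F\equiv C_G+a\wi{F}=[V]$, and this class is represented by the connected cycle consisting of the transform of the \emph{positive} section $\wi{G}_Y\cap\mathbb{F}$ (whose scheme-theoretic intersection with $A_Y$ has length $d$, so its transform meets $E$ in degree $d$, not $a$) together with $d-a$ fibers of $E\to A_Y$ attached to it. Your dichotomy never treats these cycles, and your fiber count on $\mathbb{F}_a$ does not apply to them, since their $\sigma$-push-forward is not of the form $C_0+\sum f_i$. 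So, as written, the argument does not rule out all degenerations through a general point.

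The gap is repairable inside your framework, but it needs an extra case. If $Z$ is reducible or non-reduced while $\sigma_*Z$ is integral, then $\sigma_*Z=C'$ is an irreducible member of $|C_0+af|$ and $Z=\wi{C}'+(\text{an effective sum of }k\ge 1\text{ fibers of }E\to A_Y)$, where $\wi{C}'$ is the transform of $C'$. If $C'\neq \wi{G}_Y\cap\mathbb{F}$, then $C'\cap A_Y\subseteq C'\cap(\wi{G}_Y\cap\mathbb{F})$, which has length $C'\cdot(\wi{G}_Y\cap\mathbb{F})=a$; hence $E\cdot Z=\operatorname{length}(C'\cap A_Y)-k\le a-1<a$, contradicting $E\cdot Z=a$. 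If instead $C'=\wi{G}_Y\cap\mathbb{F}$, then $Z$ is supported in $\wi{G}\cup E$ and misses the general point $x$. The paper's Lemma \ref{lu} avoids this case analysis entirely: it decomposes an arbitrary effective cycle $C'\equiv C$ into one section plus fibers of $\ph_{|E}$, fibers of $\ph_{|\wi{E}}$, and whole $\ph$-fibers, and kills all contracted components at once using $G\cdot C'=\wi{G}\cdot C'=0$ (table \ref{table}). Two smaller points you gloss over: you only allow reduced $W$, whereas one must allow arbitrary length-$a$ subschemes of $\ell\cap A$ (this matters when $\ell$ is tangent to $A$, and is why the paper works with relative Hilbert schemes); and you do not justify that your curves lie in a single irreducible component of $\Rat(X)$ (the paper uses Theorem \ref{grass}) --- for the bare existence statement this can be bypassed by taking the component through one free constructed curve, but it is needed for the uniqueness assertion in Theorem \ref{main}.
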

Then we show that these are all the examples with $\rho\geq 3$.
\begin{thm}\label{main}
Let $X$ be a Fano manifold of dimension $n\geq 3$, and suppose that $X$ has a locally unsplit dominating family $V$ of rational curves of anticanonical degree $n$. Then $\rho_X\leq 3$. 

If moreover $\rho_X=3$, then $X$ is isomorphic to one of the varieties described in Example \ref{ex}, and the family $V$ is unique.
\end{thm}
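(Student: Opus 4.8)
The plan is to reduce first to the case where $V$ is \emph{not} a minimal family, then to extract enough numerical structure from the locus of $V$ to bound $\rho_X$, and finally to reconstruct $X$ through its Mori contractions. I would begin with the basic dimension count. Since $-K_X\cdot V=n$ and $V$ is locally unsplit, for general $x\in X$ the subfamily $V_x$ is proper of dimension $n-2$, and its locus $D_x:=\Lo(V_x)$ satisfies $n-1\le\dim D_x\le\dim V_x+1=n-1$, so $D_x$ is a divisor swept out by the curves of $V_x$ through $x$. The tangent map $\tau_x\colon V_x\dashrightarrow\pr(T_xX)\cong\pr^{n-1}$ is finite onto its image, so the associated variety of tangents $\mathcal{C}_x$ is a hypersurface in $\pr^{n-1}$; this is the exact analogue of the quadric cone cut out on a smooth quadric by its tangent hyperplane, which is the $\rho_X=1$ model of Theorem \ref{M}. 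Since $V$ is a locally unsplit dominating family of degree $n$ we have $l_X\le n$, and if $l_X=n$ then Theorems \ref{M} and \ref{result} force $\rho_X\le 2$ and we are done. From now on I may therefore assume $l_X\le n-1$ and fix a minimal dominating family $W$, locally unsplit with $-K_X\cdot W=l_X<n$; in particular $V\neq W$, so $V$ genuinely degenerates in $\overline{\Rat(X)}$.

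Next I would exploit those degenerations. Because $V$ is only locally unsplit, its closure contains reducible curves, and the components appearing in such splittings generate a small number of unsplit families whose numerical classes—together with $[V]$ and $[W]$—are constrained by the relation that the anticanonical degrees of the components sum to $n$. Combining this with the cone theorem and Wi\'{s}niewski-type inequalities for the contractions of the extremal rays met by $W$ and by the boundary classes, I expect to show that $\overline{\NE}(X)$ is spanned by at most three rays, that is, $\rho_X\le 3$; the class $[D_x]$, which I expect to be independent of the general point $x$, supplies the dual data needed to separate these rays. This is the content of Proposition \ref{prop_main}.

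Finally, assuming $\rho_X=3$, I would identify the three extremal contractions explicitly. The heart of the matter is to produce a divisorial contraction $\sigma\colon X\to Y$ that is the smooth blow-up of a codimension-two center $A_Y$, with $Y$ of Picard number $2$, and then to recognise $Y$ as a $\pr^1$-bundle $\pr_{\pr^{n-1}}(\ol_{\pr^{n-1}}\oplus\ol_{\pr^{n-1}}(a))$ over $\pr^{n-1}$ in which $A_Y$ lies in a section $\wi{G}_Y$ as the preimage of a smooth hypersurface $A\subset\pr^{n-1}$; tracking $-K_X\cdot V=n$ through $\sigma$ should pin down how $A_Y$ meets the fibres and recover exactly Example \ref{ex}. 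The curves of $V$ are then the strict transforms of the distinguished sections meeting $A_Y$, and since the other two contractions of $X$ produce families whose anticanonical degree differs from $n$, the degree-$n$ family is unique.

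The hard part will be this last reconstruction step, for two reasons. First, the variety of tangents $\mathcal{C}_x$ can be singular—as the examples in this paper show—so the classical recognition theorems of Cho--Miyaoka--Shepherd-Barron and Hwang--Mok are unavailable, and one must argue purely through the Mori contractions rather than through the projective geometry of $\mathcal{C}_x$. Second, one must rule out all competing configurations of three extremal rays and verify that the blow-down target is precisely the decomposable $\pr^1$-bundle over $\pr^{n-1}$ with the center sitting inside a section; this case analysis, together with the bookkeeping of anticanonical degrees under $\sigma$, is where the real work lies.
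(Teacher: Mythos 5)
There is a genuine gap: both pivotal claims of your plan --- the bound $\rho_X\le 3$ and the $\rho_X=3$ classification --- are announced (``I expect to show\dots'', ``should pin down\dots'') rather than argued, and the mechanism you propose misses the one observation that makes the paper's proof run. The bound does not come from bookkeeping the degrees of components of degenerations of $V$ against a minimal family $W$, plus the cone theorem; it comes from the fact that for general $x$ any irreducible component $D$ of $\Lo(V_x)$ satisfies $\dim\N(D,X)=1$ (Remark \ref{locuns}, via Koll\'ar's results) --- a statement about the classes of \emph{curves contained in} $D$, not about the divisor class $[D_x]\in\Nu(X)$, which is what you invoke. Granting this, any extremal ray $R$ with $D\cdot R>0$ cannot lie in $\N(D,X)$, so its contraction is finite on $D$, hence has fibers of dimension at most one, hence is either a conic bundle or the smooth blow-up of a codimension-two center; iterating this at most twice lands on a conic bundle over a base of Picard number one, giving $\rho_X\le 3$ (Lemma \ref{uno}, after \cite{toru}). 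Your second paragraph contains no substitute for this argument, and it is far from clear that splitting classes of degenerate members of $V$ alone could bound $\rho_X$. (Your initial reduction to $l_X\le n-1$ via Theorems \ref{M} and \ref{result} is legitimate but unnecessary; the paper's argument never uses a minimal family $W$.)

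The reconstruction and uniqueness steps have the same defect, plus a misconception that would actively block your route. You assert that since $\mathcal{C}_x$ may be singular, ``the classical recognition theorems of Cho--Miyaoka--Shepherd-Barron \dots are unavailable, and one must argue purely through the Mori contractions.'' In fact Theorem \ref{CMSBK} is exactly what the paper uses to identify the base: after Theorem \ref{main2} exhibits $X$ as a blow-up of a $\pr^1$-bundle $Y\to Z$ over a Fano $Z$ with $\rho_Z=1$, one pushes $V$ forward to a family $V_Z$ on $Z$, checks $-K_Z\cdot[V_Z]=\dim Z+1$, and applies CMSB to $Z$ --- not to $X$. The genuinely hard point, absent from your outline, is proving that $V_Z$ is \emph{locally unsplit}, i.e.\ ruling out that $\ph\colon X\to Z$ has degree $k\ge 2$ on curves of $V_x$; this is the delicate surface argument with $\w{\mathbb{F}}$ occupying most of the proof of Proposition \ref{prop_main}. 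Finally, your uniqueness argument (``the other two contractions of $X$ produce families whose anticanonical degree differs from $n$'') is invalid as stated: it says nothing about a hypothetical second locally unsplit dominating family of degree $n$ unrelated to those contractions. The paper proves uniqueness by first pinning down the numerical class of \emph{any} such family, $[V]\equiv C_G+a\wi{F}$ (Proposition \ref{prop_main}), and then showing that every effective one-cycle in that class meeting the open set $X_0$ is an integral curve arising from Construction \ref{construction} (Lemma \ref{lu}), so the family is forced to coincide with the one of Proposition \ref{examples}.
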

We study in more detail the family of curves given by Proposition \ref{examples}. For $x\in X$,  we denote by $V_x$ the normalization of the closed subset of $V$ parametrizing curves containing $x$.
\begin{thm}\label{V_x}
Let $X$ and $V$ be as in Proposition \ref{examples}, and $x\in X$ a general point. Let $z\in\pr^{n-1}$ be the image of $x$ under the morphism $X\to\pr^{n-1}$, and  let 
$p_z\colon A \to \mathbb{P}^{n-2}$ be the degree $d$ morphism  induced by the linear projection $\mathbb{P}^{n-1}\dashrightarrow \mathbb{P}^{n-2}$ from $z$.

Then 
$V_x$ is smooth and connected. If $a=0$, then $V_x\cong\pr^{n-2}$. If $a>0$, then $V_x$ is isomorphic to the relative Hilbert scheme $\textup{Hilb}^{[a]}(A/\mathbb{P}^{n-2})$ of zero-dimensional subschemes, of length $a$, of fibers of $p_z$.
\end{thm}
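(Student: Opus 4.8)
The plan is to make the members of $V$ completely explicit on $Y$, read off from the blow-up the condition ``anticanonical degree $n$'', and then recognise the curves through a fixed general point as points of the relative Hilbert scheme.

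First I would recall from the construction in the proof of Proposition \ref{examples} that a general member of $V$ is the strict transform $\wi C$ under $\sigma$ of a section $C$ of $\pi\colon Y\to\pr^{n-1}$ over a line $\ell\subset\pr^{n-1}$, lying in the class of $\wi G_Y\cap\pi^{-1}(\ell)$ in the Hirzebruch surface $\pi^{-1}(\ell)$, i.e.\ disjoint from the negative section $G_Y\subset Y$ (the one with normal bundle $\ol_{\pr^{n-1}}(-a)$). For such a section one computes $-K_Y\cdot C=n+a$, so from $-K_X=\sigma^*(-K_Y)-E$ the equality $-K_X\cdot\wi C=n$ is equivalent to $C\cdot A_Y=a$. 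Trivialising $\pi^{-1}(\ell)\setminus G_Y$, such sections $C$ correspond to $s\in H^0(\ell,\ol_\ell(a))$, with $\wi G_Y\cap\pi^{-1}(\ell)$ given by $s=0$; since $A_Y\cap\pi^{-1}(\ell)$ is the degree-$d$ divisor $A\cap\ell$ lying on $\wi G_Y$, the curve $C$ meets $A_Y$ exactly along the zeros of $s$ that lie on $A\cap\ell$. Hence membership of $V$ translates into $\operatorname{div}(s)\le A\cap\ell$ with $\deg\operatorname{div}(s)=a$.

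Now fix a general $x$, and set $y=\sigma(x)$, $z=\pi(y)$; the lines $\ell\ni z$ form a $\pr^{n-2}$. The key point is that a section $s$ of degree $a$ with $\operatorname{div}(s)\le A\cap\ell$ is determined up to scalar by the length-$a$ subscheme $D:=\operatorname{div}(s)\le A\cap\ell$, while the condition that $C$ pass through $y$ (equivalently $s(z)$ equals the value prescribed by $y$, which makes sense as $z\notin A$) pins down the scalar. Thus the members of $V_x$ are in bijection with pairs $(\ell,D)$ where $\ell\ni z$ and $D\le A\cap\ell$ has length $a$; and since $A\cap\ell=p_z^{-1}([\ell])$, such a pair is precisely a length-$a$ subscheme of a fibre of $p_z$, i.e.\ a point of $\Hilb^{[a]}(A/\pr^{n-2})$. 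I would upgrade this bijection to an isomorphism of schemes by using universal families in both directions: intersecting the universal curve over $V_x$ (pushed to $Y$) with $A_Y$ gives a flat family of length-$a$ subschemes of the fibres of $p_z$, hence a morphism $V_x\to\Hilb^{[a]}(A/\pr^{n-2})$, whose inverse reconstructs from the universal subscheme the unique section $s$, and so the unique curve through $x$. When $a=0$ the subscheme is empty and $\Hilb^{[0]}(A/\pr^{n-2})=\pr^{n-2}$, recovering $V_x\cong\pr^{n-2}$.

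It then remains to prove that $\Hilb^{[a]}(A/\pr^{n-2})$ is smooth and connected. For connectedness, $V_x\to\pr^{n-2}$ is finite of degree $\binom{d}{a}$, and for general $z$ the monodromy of the degree-$d$ cover $p_z\colon A\to\pr^{n-2}$ is the full symmetric group, which acts transitively on $a$-element subsets of a general fibre; hence the cover is connected. For smoothness, I would use that the fibres $A\cap\ell$ are Cartier divisors on the smooth curves $\ell$, hence curvilinear: realising $\Hilb^{[a]}(A/\pr^{n-2})$ as the preimage of the section $[\ell]\mapsto A\cap\ell$ under the fibrewise multiplication map $\pr^a\times\pr^{d-a}\to\pr^d$, $(D,D')\mapsto D+D'$, of divisors on the lines through $z$, smoothness reduces to a transversality statement. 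This transversality is automatic at pairs $(\ell,D)$ for which $D$ and the residual divisor $A\cap\ell-D$ are disjoint. I expect the main obstacle to be precisely the lines $\ell$ tangent to $A$, where $D$ and its residual share a point and the multiplication map fails to be submersive: there one must exploit the deformation of $\ell$ inside $\pr^{n-2}$ to supply the missing normal directions, and it is exactly here that the generality of $z$ (i.e.\ that $p_z$ is a sufficiently generic projection) is needed to guarantee smoothness and to control the scheme structure of $V_x$ over the non-reduced fibres of $p_z$.
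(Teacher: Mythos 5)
Your first half is sound and essentially reproduces the paper's Construction \ref{construction} and Lemma \ref{lu}: the identification of curves of $V$ over a line $\ell$ with nonzero sections $s\in H^0(\ell,\ol_{\ell}(a))$ satisfying $\operatorname{div}(s)\le A\cap\ell$, the computation $-K_Y\cdot C=n+a$, and the observation that passing through $x$ fixes the scalar are all correct, and the resulting bijection between $V_x$ and pairs $(\ell,W)$ is exactly the paper's. (One point you gloss over: you only describe the \emph{general} member of $V$, so you still need the paper's Lemma \ref{lu}-type argument that every one-cycle in the class $[V]$ meeting the open set $X_0$ is integral and again of this form; otherwise the closure of your family could contain curves in $V_x$ not accounted for by your bijection.)

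The genuine gap is in your last paragraph: you reduce everything to proving that $\Hilb^{[a]}(A/\pr^{n-2})$ is smooth, and then leave precisely the hard case -- points $[W]$ over lines tangent to $A$, where $W$ and its residual $A\cap\ell-W$ share a point -- to an unproven ``transversality'' supplied by deforming $\ell$ and the generality of $z$. This is not a routine verification. By the paper's Theorem \ref{genus_hilbert}(2), at such a point the tangent space of the \emph{fiber} of $\Pi\colon\Hilb^{[a]}(A/\pr^{n-2})\to\pr^{n-2}$ has dimension $\sum_i\min(k_i,h_i-k_i)\ge 1$, so smoothness of the total space forces the image of $d\Pi$ at $[W]$ to drop rank correspondingly; this is a delicate statement that genuinely depends on $z$ being general (the appendix only proves the Hilbert scheme is integral and a local complete intersection for arbitrary $z$, and smooth only when $n-2=1$). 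Indeed the paper's logic runs in the \emph{opposite} direction: for general $x$ every curve of $V_x$ is free, so $V_x$ is smooth by standard deformation theory; one then builds a morphism $\Psi\colon\Hilb^{[a]}(A/\pr^{n-2})\to V_x$ in family (using integrality of the Hilbert scheme, from the appendix, to apply Grauert's theorem), shows $\Psi$ is bijective, and concludes it is an isomorphism onto the smooth variety $V_x$. Smoothness of the Hilbert scheme (the paper's Theorem \ref{app}) is a \emph{consequence} of Theorem \ref{V_x}, not an input. So your plan attempts to prove directly the one statement the paper deliberately obtains for free from the rational-curves side; as written, that step is missing, and with it the smoothness assertion of the theorem. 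The fix is cheap: invoke smoothness of $V_x$ (freeness of curves through a general point), prove only \emph{connectedness} of the Hilbert scheme by your monodromy argument plus reducedness (Cohen--Macaulay and generically reduced), and let the isomorphism transport smoothness to the Hilbert scheme rather than the other way around.
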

Finally, we consider the  variety of minimal rational tangents (VMRT) associated to the locally unsplit family $V$ at a general point $x$, defined as follows. Let
 $$\tau_x\colon V_x \dasharrow \mathbb{P}(T_{X,x}^*)$$ be the map defined by sending a general curve from $V_x$ to its tangent direction at $x$, and
define the VMRT $\mathcal{C}_x$ to be the closure of image of $\tau_x$ in $\mathbb{P}(T_{X,x}^*)$.
We still denote by $\tau_x$ the induced map $V_x \to \mathcal{C}_x$; this is in fact the normalization morphism by \cite{kebekusJAG} and \cite{hwang_mok_birationality}.
\begin{thm}\label{tau_x}
Let $X$ and $V$ be as in Proposition \ref{examples}, and $x\in X$ a general point.
\begin{enumerate}[1)]
\item
The VRMT $\mathcal{C}_x\subset\mathbb{P}(T_{X,x}^*)$ is an irreducible hypersurface of degree $\binom{d}{a}$.
\item
If $a\in\{0,1,d-1,d\}$, then $\tau_x\colon V_x\to \mathcal{C}_x$ is an isomorphism.
\item
If $2\leq a\leq d-2$, then $\tau_x\colon V_x\to \mathcal{C}_x$ is not an isomorphism. More precisely: the closed subset where $\tau_x$ is not an isomorphism has codimension $1$, and the closed subset where $\tau_x$ is not an immersion has codimension $2$.
\end{enumerate}
\end{thm}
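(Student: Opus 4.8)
The plan is to render $\tau_x$ completely explicit in local coordinates and then extract all three assertions from the resulting formula. As $x$ is general, $\sigma$ is an isomorphism near $x$, so $T_{X,x}\cong T_{Y,y}$ with $y:=\sigma(x)$, and the $\pr^1$-bundle $\pi\colon Y\to\pr^{n-1}$ distinguishes a vertical direction $P\in\pr(T_{X,x}^*)$ (tangent to the fibre) together with the linear projection $\pr(T_{X,x}^*)\dashrightarrow\pr(T^*_{\pr^{n-1},z})=\pr^{n-2}$ away from $P$. I would fix an affine coordinate $\lambda$ on a line $\ell\subset\pr^{n-1}$ through $z$ (with $z=\{\lambda=0\}$) and a fibre coordinate $t$ on the Hirzebruch surface $\pi^{-1}(\ell)$ with $\wi{G}_Y\cap\pi^{-1}(\ell)=\{t=0\}$, so that $y=(\lambda=0,\,t=t_0)$ with $t_0\neq0$. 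By the description of $V_x$ in Theorem \ref{V_x}, the curve attached to a length-$a$ subscheme $\xi\subset A\cap\ell$ is the unique section $t=g_\xi(\lambda)$ of $\pi^{-1}(\ell)$ through $y$ and $\xi$; if $A\cap\ell=\{\lambda_1,\dots,\lambda_d\}$ and $\mu_k:=\lambda_k^{-1}$, then $g_\xi(\lambda)=c\prod_{k\in\xi}(\lambda-\lambda_k)$ with $c$ determined by $g_\xi(0)=t_0$. A logarithmic-derivative computation gives $g_\xi'(0)=-t_0\sum_{k\in\xi}\mu_k$, hence
\[
\tau_x(w,\xi)=\Bigl[\,w:\,-t_0\sum_{k\in\xi}\mu_k(w)\,\Bigr]\in\pr(T_{X,x}^*),
\]
where $w\in\pr^{n-2}$ is the direction of $\ell$ and the $\mu_k(w)$ are the reciprocals of the roots of the degree-$d$ polynomial $\lambda\mapsto F(z+\lambda w)$, for $F$ an equation of $A$. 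Each $\mu_k$ is homogeneous of degree $1$ in $w$.

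For part 1) I would use that, by this formula, the finite degree-$\binom{d}{a}$ map $V_x\to\pr^{n-2}$ of Theorem \ref{V_x} factors as $V_x\xrightarrow{\tau_x}\mathcal{C}_x\to\pr^{n-2}$, the second map being projection away from $P$. Since $V_x$ is irreducible, $\mathcal{C}_x$ is irreducible. The polynomial
\[
\Phi(w,s):=\prod_{|\xi|=a}\Bigl(s+t_0\sum_{k\in\xi}\mu_k(w)\Bigr)
\]
is symmetric in $\mu_1,\dots,\mu_d$, hence a genuine polynomial in $w$ via the elementary symmetric functions $e_j(\mu)=\pm F_j(w)/F(z)$ (where $F(z+\lambda w)=\sum_j F_j(w)\lambda^j$), and it is homogeneous of degree $\binom{d}{a}$ in $(w,s)$ cutting out $\mathcal{C}_x$. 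As $\Phi(0,s)=s^{\binom{d}{a}}$ we have $\Phi(P)\neq0$, so $P\notin\mathcal{C}_x$ and projection from $P$ is finite of degree $\deg\mathcal{C}_x$. Since the composite has degree $\binom{d}{a}$ and $\tau_x$ is birational (it is the normalisation), this degree equals $\binom{d}{a}$, whence $\deg\mathcal{C}_x=\binom{d}{a}$.

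For parts 2) and 3) I would study injectivity and the differential of $\tau_x$ separately, using that $\tau_x$ is an isomorphism exactly where $\mathcal{C}_x$ is normal. The formula shows $\tau_x$ fails to be injective precisely over
\[
\Gamma=\bigl\{w:\ \exists\,\xi\neq\xi',\ \textstyle\sum_{k\in\xi}\mu_k(w)=\sum_{k\in\xi'}\mu_k(w)\bigr\},
\]
the discriminant (in $s$) of $\Phi$. When $a\in\{0,1,d-1,d\}$ the pair $(w,\sum_{k\in\xi}\mu_k)$ determines $\xi$ (the sum is $0$, a single $\mu_i$, $e_1-\mu_i$, or $e_1$), so $\Gamma=\varnothing$ and a direct check of the differential shows $\tau_x$ is a closed immersion onto a smooth $\mathcal{C}_x$; this gives part 2). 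For $2\le a\le d-2$ two distinct $a$-subsets with equal sum and distinct roots differ in at least two indices, i.e.\ satisfy $\mu_i+\mu_j=\mu_k+\mu_l$ with $i,j,k,l$ distinct, so $\Gamma$ is a nonempty hypersurface over which $\tau_x$ is generically $2{:}1$ onto a transverse double locus of $\mathcal{C}_x$: this is the codimension-$1$ non-isomorphism locus. For the differential I would note that $V_x\to\pr^{n-2}$ is étale off the branch locus of $p_z$, where $\tau_x$ is automatically an immersion, and that at a simple tangency (one colliding pair $\mu_i,\mu_j$) the local coordinate enters $\sum_{k\in\xi}\mu_k$ linearly as long as $\xi$ contains exactly one of $\mu_i,\mu_j$, keeping $d\tau_x$ injective; $d\tau_x$ drops rank only when $\xi$ separates two distinct colliding pairs at once, i.e.\ over a bitangent line through $z$, a codimension-$2$ condition. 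The main obstacle is to prove that $\Gamma$ and the bitangent locus have exactly the expected codimensions and are nonempty (for $n\ge4$) for general $A$, which I would settle by dimension counts for the discriminant and bitangent loci of the projection $p_z\colon A\to\pr^{n-2}$, together with the fact that $\Phi$ has generically distinct linear factors; for $n=3$ the bitangent locus is empty and $\mathcal{C}_x$ is a nodal plane curve, consistently with part 3).
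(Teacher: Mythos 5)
Your explicit coordinate formula for $\tau_x$ is correct: the unique section through $y$ and $W$ is $t=c\prod_{k\in\xi}(\lambda-\lambda_k)$, and the logarithmic derivative gives the tangent direction $[\,w:-t_0\sum_{k\in\xi}\mu_k(w)\,]$, with the reciprocal roots $\mu_k$ behaving well in families since their elementary symmetric functions are the polynomials $(-1)^jF_j(w)/F(z)$. With this, your part 1) is sound and is in substance the paper's own argument (degree of the composite $V_x\to\mathcal{C}_x\to\pr^{n-2}$ equals $\binom{d}{a}$ by Theorem \ref{V_x}, and birationality of the normalization $\tau_x$ transfers it to the projection from $P\notin\mathcal{C}_x$), and part 2) goes through provided you phrase injectivity in terms of subschemes of $\ell\cap A$ rather than index sets, since over the branch locus of $p_z$ distinct index sets can give the same point of $V_x$.

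The genuine gap is in part 3), and it is exactly the step you defer to ``dimension counts.'' First, your $\Gamma$ is the discriminant locus of $\Phi$ in $s$, and this locus always contains the branch locus of $p_z$: there, factors of $\Phi$ collide because the corresponding points of $V_x$ themselves collide, with no failure of injectivity of $\tau_x$ (for $a=1$ the discriminant locus is a nonempty hypersurface, yet $\tau_x$ is an isomorphism). So non-emptiness of the discriminant hypersurface proves nothing; what must be produced is a line $\ell$ through $z$ carrying two \emph{distinct} length-$a$ subschemes of $\ell\cap A$ with equal weighted $\mu$-sums. No expected-dimension count can produce this: the configurations $(\mu_1(w),\dots,\mu_d(w))$ range over a constrained $(n-2)$-parameter family determined by the \emph{fixed} $A$ and $z$, and the theorem is asserted for \emph{every} smooth $A$ (with $z$ general), so genericity of $A$ cannot be invoked either. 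The paper closes precisely this hole by a global count: over a general line $L\subset\pr^{n-2}$, the smooth curve $\Pi^{-1}(L)=\Hilb^{[a]}(A_L/L)$ has genus $1+\frac{1}{2}\binom{d}{a}\left(a(d-a)-2\right)$ by Riemann--Hurwitz (Theorem \ref{genus_hilbert}), while the plane curve $(\Pi')^{-1}(L)$ of degree $\binom{d}{a}$ has arithmetic genus $\frac{1}{2}\bigl(\binom{d}{a}-1\bigr)\bigl(\binom{d}{a}-2\bigr)$; these disagree for $2\le a\le d-2$, forcing non-isomorphism over every general line, hence in codimension $1$. The same defect affects your codimension-$2$ claim: the existence of bitangent lines of $A$ through the general point $z$ (needed for $n\ge 4$) is again a non-emptiness statement, which the paper derives from the classical fact that the branch curve of a general projection of a smooth degree-$d$ surface to $\pr^2$ has exactly $d(d-1)(d-2)(d-3)/2>0$ nodes when $d\ge 4$ — not from a dimension count. (A minor slip besides: two distinct $a$-subsets with equal sums and distinct roots need not satisfy $\mu_i+\mu_j=\mu_k+\mu_l$; they may differ in more than two indices.) Your local analysis of the differential — immersion wherever at most one pair of roots collides, rank drop when $\xi$ separates two colliding pairs — is correct and parallels the paper's Lemma \ref{free}, Remark \ref{standard}, and Theorem \ref{genus_hilbert}(2), but without the two non-emptiness inputs above the ``More precisely'' assertions of part 3) remain unproved.
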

This provides the first examples of locally unsplit dominating families of rational curves whose VMRT at a general point $x$ is singular, equivalently such that $\tau_x\colon V_x\to\mathcal{C}_x$ is not an isomorphism (see \cite[Question 1]{hwangICTP}, \cite[Problem 2.20]{kebekussola}, \cite{hwang_equivalence}, and \cite{hwangcodone}). 
Notice that if $2\le a\le d-2$, then $V$ is not a dominating family of rational curves of minimal degree (see Remark \ref{minimaldegree}).

In \cite{hwangcodone}, Hwang studies projective manifolds $X$ having a locally unsplit
dominating family $V$ of rational curves of anticanonical degree equal to
the dimension of $X$. Under the assumption that the VMRT of $V$ at a
general point is smooth, he gives a birational description of $X$, see
\cite[Theorem 1.4]{hwangcodone}.

\medskip

In order to prove Theorems \ref{V_x} and \ref{tau_x}, we are led to study relative Hilbert schemes of zero-dimensional subschemes of the projection of a smooth hypersurface from a general point. We obtain the 
following result, of independent interest.
\begin{thm}\label{app}
Fix integers $m$, $a$, and $d$, such that $m\geq 1$ and 
$1\leq a\leq d$. Let $A\subset \pr^{m+1}$
be a smooth hypersurface of degree $d$.
Let $z\in \pr^{m+1}$ be a general point, and let $A \to \mathbb{P}^{m}$
be the  morphism  induced by the linear projection  from $z$,
where we identify $\pr^m$ with the variety of lines through $z$ in $\pr^{m+1}$.
\begin{enumerate}[1)]
\item The scheme $\textup{Hilb}^{[a]}(A/\mathbb{P}^{m})$
is connected and smooth of dimension $m$,  and the natural morphism 
$\Pi\colon\textup{Hilb}^{[a]}(A/\mathbb{P}^{m})\to \mathbb{P}^{m}$
is finite of degree $\binom{d}{a}$.
\item Let $\ell\subset \pr^{m+1}$ be a line through $z$, and let $[W]\in \textup{Hilb}^{[a]}(A/\mathbb{P}^{m})$ be a point over $[\ell]\in\pr^m$. Then $\Pi$ is smooth at $[W]$ if and only if $W$ is a union of irreducible components of $\ell\cap A$. 
\end{enumerate}
\end{thm}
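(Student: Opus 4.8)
The plan is to reduce the entire statement to the geometry of a single smooth relative divisor in a $\pr^1$-bundle, and then to a tangent--obstruction computation whose only delicate point is controlled by the genericity of $z$. Since $z$ is general we may assume $z\notin A$; then no line through $z$ is contained in $A$ (such a line would lie in $A$, contradicting $z\notin A$), so every line $\ell\ni z$ meets $A$ in a length-$d$ divisor. Blowing up $\pr^{m+1}$ at $z$ realises the lines through $z$ as a $\pr^1$-bundle $\pi\colon L\to\pr^m=:S$, and the strict transform of $A$ becomes a \emph{smooth} relative Cartier divisor $A\hookrightarrow L$ with $A\to S$ finite flat of degree $d$ (flatness by miracle flatness). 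Since each fibre $\ell\cap A$ sits inside the smooth curve $\ell\cong\pr^1$, it is curvilinear, with local ring $\ol_{\ell,p_i}/(t^{k_i})$ at each point $p_i$, where $k_i$ is the intersection multiplicity. A point $[W]\in\Hilb^{[a]}(A/S)$ over $[\ell]$ is then the same datum as a degree-$a$ effective divisor $D=W$ on $\ell$ with $D\le \ell\cap A$; writing $\ell\cap A=D+D'$ records the complementary divisor $D'$ of degree $d-a$. At once $\Pi$ is finite (Hilbert scheme of points of the fibres of a finite morphism), and over the locus where $\ell\cap A$ is reduced its fibre is the set of $\binom{d}{a}$ choices of $a$ among $d$ points, so $\deg\Pi=\binom{d}{a}$.

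The engine is a tangent--obstruction computation. Let $w,w'$ be the equations of $D,D'$ on $\ell$. At $[W]$ over $s=[\ell]$ the relative tangent space of $\Pi$ is $\Hom_{\ol_{\ell\cap A}}(I_W,\ol_W)$ and the relative obstructions lie in $\Ext^1_{\ol_{\ell\cap A}}(I_W,\ol_W)\cong\operatorname{coker}\big(H^0(\ol_\ell(a))\oplus H^0(\ol_\ell(d-a))\to H^0(\ol_\ell(d))\big)$, the map being $(u,u')\mapsto u\,w'+u'\,w$. A direct computation on $\pr^1$ gives $\dim\Hom=\dim\Ext^1=\deg\gcd(w,w')=\sum_i\min(a_i,k_i-a_i)$, where $a_i$ is the multiplicity of $p_i$ in $W$. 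These fit into
\[ 0\to \Hom_{\ol_{\ell\cap A}}(I_W,\ol_W)\to T_{[W]}\Hilb^{[a]}(A/S)\xrightarrow{d\Pi} T_sS\xrightarrow{\ o\ }\Ext^1_{\ol_{\ell\cap A}}(I_W,\ol_W), \]
where $o(v)=[\partial_v(\ell\cap A)]$ is the class of the derivative of the family of degree-$d$ divisors. Equivalently, $\Hilb^{[a]}(A/S)=\mu^{-1}(\sigma_A(S))$ where $\mu\colon\Hilb^{[a]}(L/S)\times_S\Hilb^{[d-a]}(L/S)\to\Hilb^{[d]}(L/S)$ is addition of divisors and $\sigma_A\colon[\ell]\mapsto[\ell\cap A]$; source and target of $\mu$ are smooth and $\sigma_A(S)$ has codimension $d$, so smoothness of the fibre product at $[W]$ is exactly transversality of $\sigma_A$ to $\mu$, which is exactly surjectivity of $o$.

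The heart of the argument, and the step I expect to be the main obstacle, is to prove that $o$ is surjective at every $[W]$ for $z$ general. Granting this, the sequence gives $\dim T_{[W]}\Hilb^{[a]}(A/S)=\dim\Hom+m-\dim\Ext^1=m$, and transversality makes $\Hilb^{[a]}(A/S)$ smooth of pure dimension $m$ --- this is part (1) apart from connectedness. The target of $o$ is the space of jets of order $\min(a_i,k_i-a_i)$ at the points $p_i$ where $D$ and $D'$ share support, and $o(v)$ is the jet of $\partial_v(\ell\cap A)$; surjectivity asserts that by rotating the line through $z$ one may prescribe these jets independently, which must use smoothness of $A$ and genericity of $z$. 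A clean way to organise this is to let $z$ vary: over $\mathbb{G}=\mathbb{G}(1,m+1)$ the universal intersection $\{(x,\ell):x\in A\cap\ell\}$ is a $\pr^m$-bundle over the smooth $A$, hence smooth, and sits as a smooth relative divisor in the universal $\pr^1$-bundle; showing the corresponding relative Hilbert scheme is smooth (the extra freedom of moving $\ell$ in all directions forcing $o$ surjective) and then invoking generic smoothness in characteristic zero yields smoothness of $\Hilb^{[a]}(A/S)$ for general $z$. I expect the deepest strata --- several coincident tangencies along one line --- to be the genuinely delicate case, where one needs enough independent motions of the line, guaranteed by the smoothness of $A$.

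It remains to deduce connectedness and part (2). Once smoothness is known, every irreducible component of $\Hilb^{[a]}(A/S)$ is smooth of dimension $m$, finite and dominant over $S$, hence meets the general fibre of $\Pi$; but for general $z$ the projection $A\to\pr^m$ is a generic projection with simple branching and $A$ is irreducible, so its monodromy is the full symmetric group $S_d$, which acts transitively on the $\binom{d}{a}$ subsets of size $a$ of a general fibre. Thus the general fibre lies in a single component and $\Hilb^{[a]}(A/S)$ is connected. Finally, as $\Pi$ is finite with smooth source and target of dimension $m$, it is smooth at $[W]$ iff étale there iff unramified there, i.e. iff the relative tangent space $\Hom_{\ol_{\ell\cap A}}(I_W,\ol_W)$ vanishes. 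By the computation above this dimension is $\sum_i\min(a_i,k_i-a_i)$, which is zero iff each $a_i\in\{0,k_i\}$, i.e. iff at every point of $\ell\cap A$ the scheme $W$ contains either the whole local component or none of it --- precisely the condition that $W$ be a union of irreducible components of $\ell\cap A$.
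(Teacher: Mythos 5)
Your reduction is sound as far as it goes, and it runs parallel to the paper's own local analysis: the identification of the relative tangent and obstruction spaces with $\Hom(\mathscr{I}_W,\ol_W)$ and $\Ext^1(\mathscr{I}_W,\ol_W)$, both of dimension $\sum_i\min(a_i,k_i-a_i)$, is Lemma \ref{local}; the finiteness and degree count, the monodromy argument for connectedness (full symmetric group, as in Cukierman), and the deduction of part (2) from vanishing of the relative tangent space are essentially Theorem \ref{genus_hilbert}(1)--(3). The problem is the step you yourself call the heart: surjectivity of the obstruction map $o$ for $z$ general is never proved, and the mechanism you propose for it is false. Over the full Grassmannian, writing $\ell=\{y_1=\cdots=y_m=0\}$ and $A=\{f=0\}$ locally, the first-order variation of the intersection divisor under all motions $y_j=s_j+t_jx$ of the line lies in the span of $\{f_{y_j}(x,0),\,x f_{y_j}(x,0)\}_{j=1,\dots,m}$, while the obstruction space at a contact point of order $h$ with $W$-multiplicity $k$ is $\mathbb{C}[x]/(x^{\min(k,h-k)})$; so ``moving $\ell$ in all directions'' need not force surjectivity once $\min(k,h-k)\ge 3$. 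Concretely, take the smooth plane sextic $yz^5-x^6+y^6=0$: the line $y=0$ meets it in a single point of multiplicity $6$, and here $f_y(x,0)=1$, so for $a=3$ and $W$ the length-$3$ subscheme at that point the image of $o$ is $\operatorname{span}\{1,x\}$ inside the $3$-dimensional $\mathbb{C}[x]/(x^3)$. Hence $\Hilb^{[3]}(\mathcal{I}/\mathcal{G})$ has a tangent space of dimension at least $3$ at $[W]$ while it has dimension $2m=2$ (Theorem \ref{grass}): it is singular. Note that the theorem assumes only $z$ general, with $A$ an arbitrary smooth hypersurface, so this example is fatal to your intermediate claim; smoothness of the total space $\mathcal{I}$ of the family does not help, since relative Hilbert schemes of points on fibers of a morphism from a smooth variety are in general singular. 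With the total space singular, the generic-smoothness deduction collapses.

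What would actually have to be proved is that for $z$ general no line through $z$ carries a non-transverse configuration $[W]$ --- already for $m=1$ this is the statement that a general point of $\pr^2$ lies on no bitangent or inflectional line, and for $m\ge 2$ bitangent and inflectional lines through $z$ do exist, so one needs finer facts (for instance, that no bitangent line through a general $z$ has equal tangent hyperplanes at its two contact points), stratum by stratum, for an arbitrary smooth $A$. This is exactly the difficulty the paper circumvents: it never verifies surjectivity of $o$ directly. Instead, the proof of Theorem \ref{app} takes the Fano manifold $X$ of Example \ref{ex} with $n=m+2$, uses Proposition \ref{examples} and Theorem \ref{main}, and then the proof of Theorem \ref{V_x} produces an isomorphism $\Psi\colon \Hilb^{[a]}(A/\pr^m)\to V_x$ onto the space of curves of the locally unsplit dominating family through a general point $x\in X$; smoothness then comes for free from the theory of rational curves, since every curve of such a family through a general point is free. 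So your proposal, if completed, would indeed be a genuinely different (direct, deformation-theoretic) proof, but its central transversality statement is missing, and the route you sketch for it cannot be repaired as stated.
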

Notice that the smoothness of $\textup{Hilb}^{[a]}(A/\mathbb{P}^{m})$
was proved by Gruson and Peskine
in \cite[Theorem 1.3]{grusonpeskine}, with different methods. Our proof is independent from theirs, and relies on Theorem \ref{V_x}.
\begin{parg}{\bf Outline of the paper.}
In section \ref{notations}, we introduce notations used in the remainder of the paper, and we discuss some properties of families of rational curves.

In section \ref{D} we study Fano manifolds $X$ of dimension $n\geq 3$ having a prime divisor $D$ with $\rho_D=1$, using techniques from the Minimal Model Program, and in particular  results from \cite{BCHM}. It follows from \cite[Proposition 5]{toru} that $\rho_X\leq 3$; we study the cases $\rho_X=2$ and $\rho_X=3$. 
In the case $\rho_X=2$, we describe the possible extremal contractions of $X$ (see Remark \ref{nonnefdiv} and Proposition \ref{divisorial}). Then
we give a complete classification of these varieties when $\rho_X=3$, see Example \ref{ex2} and Theorem \ref{main2}.  
This generalizes results from \cite{bonwisncamp,toru} for the case $D\cong\pr^{n-1}$ and negative normal bundle. 

In section \ref{proofs}, we specialize the results of the previous section to the case where $X$ has a locally unsplit dominating family $V$ of rational curves of anticanonical degree $n$. Indeed, for general $x\in X$, any irreducible component of the locus swept out by curves of the family through $x$ is a divisor $D$ with $\rho_D=1$. This yields $\rho_X\leq 3$.

We first prove Theorem \ref{result} on the case $l_X=n$. Next we show that, under the assumptions of Theorem \ref{main}, if moreover $\rho_X=3$, then $X$ is isomorphic to one of the varieties described in Example \ref{ex}, and we determine the  class of the family $V$ in $\N(X)$ (see Proposition \ref{prop_main}). Finally, when $\rho_X=2$, we describe the possible extremal contractions of $X$ (see Lemma \ref{contraction}).
In this section we use repeatedly the characterization of the projective space given by Theorem \ref{CMSBK}.

In section \ref{families} we first  construct a locally unsplit dominating family of rational curves on the varieties introduced in Example \ref{ex}, proving Proposition \ref{examples}. With the use of Proposition \ref{prop_main}, we also show that the family is unique, completing the proof of Theorem \ref{main}. Finally, we show Theorems \ref{V_x} and \ref{tau_x}, using 
Theorem \ref{genus_hilbert} from the Appendix (section \ref{appendix}).

In section \ref{appendix} we discuss the relative Hilbert scheme 
$\textup{Hilb}^{[a]}(A/\mathbb{P}^{m})$, where 
  $A\to\pr^m$ is the projection of a smooth hypersurface $A\subset\pr^{m+1}$ from a general point. We first study local properties of the natural morphism $\Pi\colon \textup{Hilb}^{[a]}(A/\mathbb{P}^{m})\to\pr^m$, and we show that $\textup{Hilb}^{[a]}(A/\mathbb{P}^{m})$ is integral (see Theorem \ref{genus_hilbert}). This is used in the proof of Theorem \ref{V_x}.

 In Theorem \ref{genus_hilbert} we also determine the genus of the curve $\textup{Hilb}^{[a]}(A/\mathbb{P}^{1})$; together with the description of the fibers  of $\Pi$, this is crucial for the proof of Theorem \ref{tau_x}.

At last we prove that the Hilbert scheme is smooth (Theorem \ref{app}), as a consequence of Theorem \ref{V_x}.
\end{parg}
\begin{parg}{\bf Acknowledgements.} 
This work was initiated during the conference \emph{G\'eom\'etrie Alg\'ebrique et G\'eom\'etrie Complexe} at C.I.R.M., Marseille, in March 2012; we thank the organizers for this opportunity. 
We are also grateful to Alberto Collino for some useful conversations.

The authors were partially supported by the 
   European Research Network Program GDRE-GRIFGA 
(\textit{Italian-French GDRE in Algebraic Geometry}).  The first named author was partially supported by the Research Project M.I.U.R. PRIN 2009 ``Spazi di moduli e teoria
di Lie''. The second named author was partially supported by the project \textit{CLASS} of
Agence Nationale de la Recherche, under agreement 
ANR-10-JCJC-0111.
We would like to thank these institutions for their support. 
\end{parg}
\section{Notations and preliminaries}\label{notations}
Throughout this paper, we work over the field of complex numbers.

We will use the definitions and apply the techniques of 
the Minimal Model Program frequently, without explicit references.
We refer the reader to \cite{kollarmori} and \cite{debarreUT} for 
background and details.
 
For any projective variety $X$, we denote by $\N(X)$ (respectively, $\Nu(X)$) the vector space of one-cycles  (respectively, Cartier divisors), with real coefficients, modulo numerical equivalence. We denote numerical equivalence by $\equiv$, for both one-cycles and $\Q$-Cartier divisors. We denote by $[C]$ (respectively, $[D]$) the numerical equivalence class of a curve $C$ (respectively, of a Cartier divisor $D$). Moreover, $\NE(X)\subset\N(X)$ is the convex cone generated by classes of effective curves.

\emph{For any closed subset $Z\subset X$, we denote by $\N(Z,X)$ the subspace of $\N(X)$ generated by classes of curves contained in $Z$.}

If $D$ is a Cartier divisor in $X$, we set $D^{\perp}:=\{\gamma\in\N(X)\,|\,D\cdot\gamma=0\}$.

If $X$ is a normal projective variety, a \emph{contraction} of $X$ is a surjective morphism $\ph\colon X\to Y$, with connected fibers, where $Y$ is normal and projective. The contraction is elementary if $\rho_X-\rho_Y=1$.

Let $R$ be an extremal ray of $\NE(X)$. 
If $D$ is a divisor in $X$, the sign of $D\cdot R$ is the sign of $D\cdot \Gamma$, $\Gamma$ a non-zero one-cycle with class in $R$. 

Suppose that $K_X\cdot R<0$, 
 and 
let $\ph\colon X \to Y$ be the associated elementary contraction.
We set $\Lo(R):=\Exc(\ph)$, the locus where $\ph$ is not an isomorphism.

\smallskip

By a $\pr^1$-bundle we mean a smooth morphism whose fibers are isomorphic to $\pr^1$, while a  morphism is called a conic bundle if every fiber is 
 isomorphic to a plane conic. 

If $\mathscr{E}$ is a vector bundle on a variety $Y$, we denote by $\pr_{Y}(\mathscr{E})$ the scheme $\Proj_Y(\Sym(\mathscr{E}))$.

\smallskip

We refer the reader to \cite[\S II.2 and IV.2]{kollar} for the main properties of families of rational curves; we will keep the same notation as \cite{kollar}.
In particular, we recall that $\Rat(X)$ is the normalization of the open subset of $\text{Chow}(X)$ parametrizing integral rational curves.

By a family of rational curves in $X$ we mean an irreducible component $V$ of $\Rat(X)$. We say that $V$ is a \emph{dominating family} if its universal family dominates $X$.
We say that $V$ is \emph{locally unsplit} if, for a general point $x\in X$, the subfamily of $V$ parametrizing curves through $x$ is proper.

Let $V$ be a locally unsplit dominating family of rational curves on $X$.
The class in $\N(X)$ of a curve $C$ from $V$ does not depend on the choice of $C$, and will be 
 denoted by $[V]$.

We denote by $[C]\in V$ a point corresponding to the integral rational curve $C\subset X$. We warn the reader that this is the same notation as for the numerical equivalence class of the curve $C$ in $\N(X)$. Unfortunately both notations are standard; however it will be easy for the reader to understand from the context whether we are considering the point $[C]$ in $V$ or in $\N(X)$.

For $x\in X$, we denote by $V_x$ the \emph{normalization} of the closed subset of $V$ parametrizing curves through the point $x$, and by $\Lo(V_x)\subseteq X$ 
the  union of all curves of the family $V_x$.

We denote by $\Hom(\pr^1,X,0\mapsto x)$ the scheme parametrizing morphisms from $\pr^1$ to $X$ sending $0\in\pr^1$ to $x$. 

We now recall some well-known properties.

Suppose that $x\in X$ is \emph{general}.
Then every curve in $V_x$ is free. This implies that $V_x$ is smooth, of dimension
$-K_X\cdot [V]-2$, but possibly not connected. Moreover there is a smooth closed subset $\wi{V}_x\subset\Hom(\pr^1,X,0\mapsto x)$, which is a union of irreducible components, containing all birational maps $f\colon\pr^1\to X$ such that $f(0)=x$ and $f(\pr^1)$ is a curve of the family $V_x$. There is an induced smooth morphism $\widehat{V}_x\to V_x$, sending $[f]$ to $[f(\pr^1)]$. 

Still for a general point $x$, if a curve from $V_x$ 
is smooth at $x$, then it is parametrized by a unique point of $V_x$.

We say that an integral rational curve $C\subset X$ is \emph{standard} if the pull-back of ${T_X}_{|C}$ under the normalization $\pr^1\to C$ is $\mathscr{O}_{\pr^1}(2)\oplus\mathscr{O}_{\pr^1}(1)^p\oplus\mathscr{O}_{\pr^1}^{n-1-p}$ for some $p\in\{0,\dotsc,n-1\}$.
\section{Fano manifolds containing a prime divisor with Picard number one}\label{D}
In this section we study Fano manifolds $X$ having a 
prime divisor $D$ with $\rho_D=1$, or more generally $\dim\N(D,X)=1$. 
The main technique here is the study of extremal rays and contractions of $X$. 

The first step for the proofs of Theorems \ref{result} and \ref{main2} is the following Lemma. It is a standard application of Mori theory, in particular
the proof can be adapted from  \cite[proof of Proposition 5]{toru}, and follows the same
strategy used in \cite{bonwisncamp}. 
We give a short proof for the reader's convenience.
\begin{lemma}\label{uno}
Let $X$ be a Fano manifold of dimension $n\geq 3$ and Picard number $\rho_X>1$, and let $D\subset X$ be a prime divisor with $\dim\N(D,X)=1$. Then one of the following holds:
\begin{enumerate}[$(i)$]
\item $\rho_X=2$ and there exists a blow-up
$\sigma\colon X\to Y$ with center $A_Y\subset Y$ smooth of codimension $2$, $Y$ is smooth and Fano, and $D\cdot R>0$, where $R$ is the extremal ray of $\NE(X)$ generated by the class of a curve contracted by  $\sigma$;
\item $\rho_X=2$ and there exists a conic bundle $\sigma\colon X\to Y$, finite on $D$, such that $Y$ is smooth and Fano;
\item $\rho_X=3$ and there is a conic bundle $\ph\colon X\to Z$, finite on $D$, such that $Z$ is smooth, Fano, and $\rho_Z=1$.
The conic bundle $\ph$ is the contraction of a face $R+\widehat{R}$ of $\NE(X)$, where $R$ and $\wi{R}$ both correspond to a smooth blow-up of a codimension $2$ subvariety. Moreover $D\cdot R>0$, and we have a diagram:
$$\xymatrix{&X\ar[dl]_{\wi{\sigma}}\ar[dd]^{\ph}\ar[dr]^{\sigma}&\\
{\wi{Y}}\ar[dr]_{\wi{\pi}}&&Y\ar[dl]^{\pi}\\
&Z&
}$$
where $\sigma$ is the contraction of $R$, $\wi{\sigma}$ is the contraction of $\wi{R}$, $Y$ and $\wi{Y}$ are smooth, $Y$ is Fano, the center $A_Y\subset Y$ of the blow-up $\sigma$ is contained in $D_Y:=\sigma(D)$, and $\pi$ and $\wi{\pi}$ are conic bundles.
\end{enumerate}
\end{lemma}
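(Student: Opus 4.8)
The plan is to run Mori theory on the Fano manifold $X$, using the prime divisor $D$ with $\dim\N(D,X)=1$ as the main constraint on which extremal rays can exist. The key observation is that since $\dim\N(D,X)=1$, every curve contained in $D$ lies in a single ray, so any extremal contraction that does not contract $D$ must be finite on $D$, and any contraction that does contract a curve of $D$ must send $D$ itself to a lower-dimensional set. First I would list, for each extremal ray $R$ with $K_X\cdot R<0$, the geometric possibilities for the elementary contraction $\ph_R$ (fiber type, divisorial, small) and intersect this list with the sign of $D\cdot R$. Because $X$ is Fano, $\NE(X)$ is polyhedral and every extremal ray is $K_X$-negative, so one obtains a finite combinatorial analysis.

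The second step is to separate cases by $\rho_X$. When $\rho_X=2$ there are exactly two extremal rays $R_1,R_2$. Since $\dim\N(D,X)=1$, the class of any curve in $D$ spans a single ray, say $R_1$; then I would argue that $\ph_{R_2}$ is finite on $D$ (as no curve of $D$ lies in $R_2$) while $\ph_{R_1}$ contracts $D$ or at least all of its curves. One then classifies $\ph_{R_1}$: I expect that $R_1$ cannot be of fiber type with positive-dimensional image collapsing $D$ in a way incompatible with $\dim\N(D,X)=1$, and the analysis narrows to either a divisorial contraction (giving case $(i)$, a smooth blow-up of a codimension-two center, with $Y$ smooth and Fano by adjunction/relative-Picard arguments and the Fano property of $X$) or a conic-bundle structure on the complementary ray (giving case $(ii)$). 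The smoothness of $Y$ and the codimension-two smooth center in $(i)$ would follow from the structure theorem for divisorial contractions of smooth Fano threefolds/manifolds with the required numerical data; that $Y$ is Fano is checked by writing $-K_Y$ in terms of $-K_X$ and the exceptional divisor.

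The third, and most delicate, step is $\rho_X=3$, where one must produce the symmetric diagram. Here I would use Toru's \cite[Proposition 5]{toru} (invoked in the excerpt) to bound $\rho_X\le 3$ and to guarantee the conic-bundle contraction $\ph\colon X\to Z$ with $Z$ smooth Fano of Picard number $1$. The heart is to show that $\ph$ is the contraction of a two-dimensional face $R+\wi R$ of $\NE(X)$ whose two boundary rays $R,\wi R$ are both divisorial of blow-up type with smooth codimension-two centers, yielding $\sigma\colon X\to Y$ and $\wi\sigma\colon X\to\wi Y$ and the commuting square down to $Z$. I would verify that $\ph$ factors through each $\sigma$ and $\wi\sigma$ (since $R,\wi R\subset R+\wi R$), that the induced maps $\pi,\wi\pi$ are again conic bundles, and that the blow-up center $A_Y$ lies in $D_Y=\sigma(D)$ by tracking where $D$ goes under $\sigma$ together with $D\cdot R>0$. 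The main obstacle I anticipate is excluding small contractions and ruling out the ``wrong'' types of elementary contractions on the boundary rays: one must show that the hypothesis $\dim\N(D,X)=1$, combined with the Fano condition and the existence of the conic bundle to $Z$, forces each boundary ray to be a genuine smooth blow-up rather than a fiber-type or non-divisorial contraction, and to establish the smoothness and Fano-ness of $Y,\wi Y$. This is where I would lean most heavily on the adjunction computations and on the classification of elementary contractions available from \cite{BCHM} and the Minimal Model Program, following the strategy of \cite{bonwisncamp} and \cite{toru}.
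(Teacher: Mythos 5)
Your analysis of the case $\rho_X=2$ rests on a false premise. You assume that the ray of $\N(X)$ spanned by the classes of curves in $D$ is one of the two extremal rays of $\NE(X)$ (your $R_1$), and you then classify ``the contraction $\ph_{R_1}$ that contracts $D$ or all of its curves''. In general this ray is \emph{not} extremal: in Example \ref{exampleM} (the blow-up of $\pr^n$ along a hypersurface $A$ of degree $d\geq 2$ inside a hyperplane, which realizes case $(i)$ of the Lemma), every curve in the divisor $D$ swept out by the family through a general point has class proportional to $[\widetilde{\ell}\,]+(d-1)[f]$, where $[f]$ is the class of a fiber of the exceptional divisor and $[\widetilde{\ell}\,]$ is the class of the transform of a line meeting $A$ in $d$ points; this is a strictly positive combination of the two extremal classes, hence not extremal, and \emph{both} elementary contractions are finite on $D$. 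So your case division never gets started, and it cannot produce the conclusion of the Lemma, in which the relevant contraction satisfies $D\cdot R>0$ and is finite on $D$. The correct opening move is the opposite one: choose an extremal ray $R$ with $D\cdot R>0$ (it exists since $D\cdot H^{n-1}>0$ for $H$ ample) and prove $R\not\subset\N(D,X)$; otherwise every curve of $D$ would have class in $R$, so $D\subseteq\Lo(R)$, and $D\cdot R>0$ would force $\Lo(R)=X$, contradicting the existence (because $\rho_X>1$) of a nontrivial fiber disjoint from $D$. Finiteness of the contraction on the divisor $D$ then bounds every nontrivial fiber by dimension one, and the dichotomy ``smooth blow-up of a smooth codimension-$2$ center, or conic bundle, with $Y$ smooth'' is given by \cite[Theorem 1.2]{wisn} and \cite[Theorem 4.1]{AWaview}. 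It does not follow from \cite{BCHM}, which concerns existence of flips and minimal models and contains no classification of elementary contractions, nor from a ``structure theorem for divisorial contractions of Fano threefolds''; identifying the fiber-dimension bound plus the Wi\'sniewski--Andreatta classification as the key tool is essential, and it is missing from your outline.

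The case $\rho_X=3$, which you reduce to quoting \cite[Proposition 5]{toru} and otherwise leave as an ``anticipated obstacle'', is precisely where the content of the Lemma lies, and it cannot be outsourced in this way: Tsukioka's statement is proved under more restrictive hypotheses (the paper says only that its \emph{proof} can be adapted), and in any case the bound $\rho_X\leq 3$, the Fano-ness of $Y$, the inclusion $A_Y\subset D_Y$, and the symmetric diagram are exactly what must be established. The paper obtains all of this by \emph{iterating} the first step rather than by splitting on $\rho_X$ beforehand: if the first contraction $\sigma\colon X\to Y$ is a blow-up with $\rho_Y\geq 2$, one checks that $Y$ is Fano (using $-K_X+E=\sigma^*(-K_Y)$ and the proportionality of all curve classes in $D_Y$), that $D_Y=\sigma(D)$ is a prime divisor with $\dim\N(D_Y,Y)=1$ containing the center $A_Y$, and then repeats the argument in $Y$ with an extremal ray $R_2$ satisfying $D_Y\cdot R_2>0$. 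The essential new point, absent from your proposal, is that the second contraction $\pi\colon Y\to Z$ cannot be another blow-up: since $\dim\N(D_Y,Y)=1$ and $\Exc(\pi)$ meets $D_Y$, the divisor $\Exc(\pi)$ is positive on every curve of $D_Y$ and hence meets $A_Y$ (which has dimension $n-2\geq 1$); the transform $\widetilde{F}\subset X$ of a nontrivial fiber $F$ of $\pi$ through a point of $A_Y$ would then satisfy $-K_X\cdot\widetilde{F}<-K_Y\cdot F=1$, contradicting that $X$ is Fano. Hence $\pi$ is a conic bundle, $\rho_Z=1$, $\rho_X=3$, and the second factorization of $\ph=\pi\circ\sigma$ through $\wi{Y}$, with $\wi{Y}$ smooth, $\wi{\sigma}$ a smooth blow-up and $\wi{\pi}$ a conic bundle, follows from \cite[Theorem 4.1]{AWaview} because all fibers of $\ph$ are one-dimensional. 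Without these steps your outline does not establish the statement.
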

\begin{proof}
Let $R$ be an extremal ray of $\NE(X)$ with $D\cdot R>0$, and
 $\sigma\colon X\to Y$ the associated contraction.

If $R\subset\N(D,X)$, then every curve contained in $D$ has class
in $R$, hence $\sigma(D)$ is a point and $D\subseteq\Lo(R)$. We conclude that $\Lo(R)=X$, because $D\cdot R>0$. On the other hand, since $\rho_X>1$, we can find a non-trivial fiber $F$ of $\sigma$ disjoint from $D$, which yields $D\cdot R=0$, a contradiction. Therefore $R\not\subset\N(D,X)$.

This implies that $\sigma$ is finite on $D$, therefore every non-trivial fiber of $\sigma$ has dimension one. Thus $Y$ is smooth and there are two possibilities: either $\sigma$ is a conic bundle, or it is the blow-up of $A_Y\subset Y$ with $A_Y$ smooth of codimension $2$ (see \cite[Theorem 1.2]{wisn}).

If  $\sigma$ is a conic bundle, then $\rho_Y=1$ because $\sigma(D)=Y$, so 
we are in case $(ii)$.
If $\sigma$ is a blow-up and 
 $\rho_Y=1$, then we are in case $(i)$. 

Assume that $\sigma$ is a blow-up and $\rho_Y\geq 2$, and set $D_Y:=\sigma(D)$.  Then $D_Y$ is a prime divisor in $Y$ and
  $\N(D_Y,Y)=\sigma_*(\N(D,X))$, hence 
$\dim\N(D_Y,Y)=1$. 
Moreover $A_Y\subset D_Y$. 

Let $E\subset X$ be the exceptional divisor. We have 
$-K_X+E=\sigma^*(-K_Y)$.
If $C\subset A_Y$ is an irreducible curve, and $C'\subset D_Y$ is an irreducible curve not contained in $A_Y$,
then there exists $\lambda\in\Q_{>0}$ such that $C\equiv\lambda C'$, 
so that $-K_Y\cdot C=\lambda (-K_Y\cdot C')
=\lambda(-K_X\cdot \widetilde{C})+\lambda(E\cdot \widetilde{C})>0$, 
where $\widetilde{C}$ is the strict transform of $C'$ in $X$. This implies that
$Y$ is Fano (see \cite{wisn}).

We repeat the same argument in $Y$ and
 take an extremal ray $R_2\subset\NE(Y)$ with $D_Y\cdot R_2>0$. 

Similarly as before we see that $R_2\not\subset\N(D_Y,Y)$, so that again, if 
 $\pi\colon Y\to Z$ is  the contraction of $R_2$,  $\pi$ is finite on $D_Y$ and has fibers of dimension at most one.
  Hence as before $\pi$ is either a conic bundle or  the smooth blow-up of a subvariety of codimension $2$ in $Z$.

If $\pi$ is a conic bundle, then $\rho_Z=1$ because $\pi(D_Y)=Z$, so $\rho_X=3$. 
Set $\ph:=\pi\circ\sigma\colon X\to Z$; then $\ph$ has a second factorization 
$X\stackrel{\wi{\sigma}}{\to}\wi{Y}\stackrel{\wi{\pi}}{\to} Z$.
 Since every fiber of $\ph$ has dimension one, 
 both $\wi{\sigma}$ and $\wi{\pi}$ have fibers of 
 dimension $\le 1$. Applying \cite[Theorem 4.1]{AWaview} we conclude that $\ph$ and  $\wi{\pi}$ are conic bundles, $\wi{Y}$ is smooth, and $\wi{\sigma}$ the blow-up of a smooth subvariety of codimension $2$, so 
 we are in case $(iii)$.


Finally,  $\pi$ cannot be a blow-up. Indeed if so, $\Exc(\pi)$ is a prime divisor which intersects $D_Y$, and since $\dim\N(D_Y,Y)=1$, $\Exc(\pi)$ has strictly positive intersection with every curve contained in $D_Y$. In particular $\Exc(\pi)$ must intersect $A_Y$, as $\dim A_Y=n-2\geq 1$. If $F$ is a non-trivial fiber of $\pi$ with $F\cap A_Y\neq\emptyset$, and $\widetilde{F}\subset X$ is its strict transform,
one has $-K_X\cdot\widetilde{F}<-K_Y\cdot F=1$, 
 a contradiction.
\end{proof}
Remark \ref{nonnefdiv} and Proposition \ref{divisorial} below   describe the possible extremal contractions of $X$ in the case $\rho_X=2$.
\begin{remark}\label{nonnefdiv}
Let $X$ be a Fano manifold, and $D\subset X$ a prime divisor with $\dim\N(D,X)=1$. If $D$ is not nef, then there exists a unique extremal ray $R$ such that $D\cdot R<0$; the contraction associated to $R$ is divisorial and sends $D$ to a point.

Indeed, let $R$ be an extremal ray of $\NE(X)$ such that 
$D \cdot R<0$, and $\sigma$ the associated contraction. Notice that 
$\Exc(\sigma)\subseteq D$ since $D \cdot R<0$. On the other hand,
every curve contained in $D$ has class in $R$
since $\dim\N(D,X)=1$. This implies that
$D = \Exc(\sigma)$, and that $\sigma(D)$ is a point.
\end{remark}
\begin{proposition}\label{divisorial}
Let $X$ be a Fano manifold of dimension $n\geq 3$ and Picard number $\rho_X=2$, and let $D\subset X$ be a nef prime divisor with $\dim\N(D,X)=1$. Then $S:=D^{\perp}\cap\NE(X)$ is an extremal ray of $X$, and one of the following holds:
\begin{enumerate}[$(i)$]
\item the contraction of $S$ is a fiber type contraction onto $\pr^1$, having $D$ as a fiber;
\item the contraction of $S$ is  divisorial, sends its exceptional divisor $G$ to a point, and $G\cap D=\emptyset$;
\item the contraction of $S$ is small, it has a flip $X\dasharrow X'$, $X'$ is smooth, and there is a $\pr^1$-bundle $\psi\colon X'\to Y'$. Moreover $\psi$ is finite on the strict transform of $D$ in $X'$.
\end{enumerate}
Furthermore, if there exists a smooth, irreducible subvariety $A\subset D$, of codimension $2$, such that the blow-up of $X$ along $A$ is Fano, then $(iii)$ cannot happen.
\end{proposition}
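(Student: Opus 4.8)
The plan is to rule out case $(iii)$ by contradiction: assuming $(iii)$ holds and that some smooth irreducible $A\subset D$ of codimension $2$ has $\wi{X}$ (the blow-up of $X$ along $A$) Fano, I would exhibit an integral curve in $\wi X$ of non-positive anticanonical degree. First I would set up a conic bundle. Since $\rho_X=2$, the cone $\NE(X)$ has exactly the two extremal rays $S=D^{\perp}\cap\NE(X)$ and a ray $R$ with $D\cdot R>0$ (as $D$ is nef and non-zero). By Lemma~\ref{uno} the contraction of $R$ is either a blow-up or a conic bundle. In case $(iii)$, the $\pr^1$-bundle $\psi\colon X'\to Y'$ produced by the flip, precomposed with $X\dashrightarrow X'$, extends to the contraction $\varphi\colon X\to Y:=Y'$ of $R$; thus $\varphi$ is of fibre type and must be the conic bundle of Lemma~\ref{uno}$(ii)$. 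Hence $\varphi\colon X\to Y$ is a conic bundle, finite on $D$, with $Y$ smooth and Fano and $\rho_Y=1$, each fibre being a conic, so $-K_X\cdot R=2$.

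Next I would record two facts. The conic bundle $\varphi$ is not a $\pr^1$-bundle: if it were, writing $X=\pr_Y(\mathscr E)$, the second extremal contraction of $X$ would be divisorial or of fibre type (a $\pr^1$-bundle has no small contraction), contradicting that the contraction of $S$ is small. Therefore $\varphi$ has degenerate fibres and its discriminant $\Delta\subset Y$ is a non-empty effective divisor. Moreover, since $A\subset D$ and $\varphi$ is finite on $D$, the restriction $\varphi|_A$ is finite, so $\varphi(A)$ is an irreducible divisor in $Y$. As $\rho_Y=1$ and $\dim Y=n-1\ge 2$, the effective divisors $\Delta$ and $\varphi(A)$ are both ample and hence meet.

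The contradiction then comes from a degenerate conic. Picking $y_0\in\Delta\cap\varphi(A)$, the fibre $\varphi^{-1}(y_0)$ is a degenerate conic; its reduced components are lines $C$ with $-K_X\cdot C=1$, and the fibre contains a point $a_0\in A$ because $y_0\in\varphi(A)$. Let $C$ be the component through $a_0$. Then $C\not\subset A$ (otherwise $\varphi$ would contract a curve lying in $A$, against the finiteness of $\varphi|_A$), so for the blow-up $\sigma\colon\wi X\to X$ with exceptional divisor $E$, the strict transform $\wi C$ satisfies $E\cdot\wi C\ge 1$, since $a_0\in C\cap A$. Therefore
\[
-K_{\wi X}\cdot\wi C=-K_X\cdot C-E\cdot\wi C\le 1-1=0,
\]
contradicting the ampleness of $-K_{\wi X}$. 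This excludes case $(iii)$.

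The step I expect to be most delicate is the identification of the contraction of $R$ as a conic bundle with non-empty discriminant: namely that in case $(iii)$ the flip forces $\varphi$ to be of fibre type (hence Lemma~\ref{uno}$(ii)$), and that the presence of a small contraction prevents $\varphi$ from being a $\pr^1$-bundle. Granting this, the intersection $\Delta\cap\varphi(A)\neq\emptyset$ and the resulting degree-$1$ curve through a point of $A$ make the anticanonical estimate on $\wi X$ immediate.
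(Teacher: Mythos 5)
Your proposal addresses only the last assertion of Proposition \ref{divisorial} (that the extra hypothesis on $A$ excludes case $(iii)$): the main body of the statement --- that $S=D^{\perp}\cap\NE(X)$ is an extremal ray (the paper gets this from the Lefschetz theorem, showing $D$ cannot be ample because $\N(D,X)\subsetneq\N(X)$), the trichotomy $(i)$--$(iii)$, and, inside case $(iii)$, the existence of the flip via \cite{BCHM}, the smoothness of $X'$, and the fact that $\psi$ is a $\pr^1$-bundle --- is nowhere proved, and that is where most of the paper's work lies. More seriously, the part you do attempt rests on a false step: the claim that $\psi$ composed with the flip extends to a morphism $\varphi\colon X\to Y'$, so that the contraction of $R$ is a conic bundle on $X$. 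The flip genuinely destroys the fibration: the rational map $X\dasharrow Y'$ is undefined at \emph{every} point of $G:=\Lo(S)$. To see this, take $n=4$, where the exceptional locus of a $K$-negative small contraction of a smooth $4$-fold is a disjoint union of planes $G\cong\pr^2$ with normal bundle $\ol_{\pr^2}(-1)^{\oplus 2}$, flipped into curves $G'\cong\pr^1$ with normal bundle $\ol_{\pr^1}(-1)^{\oplus 3}$. A common resolution is the blow-up $W\to X$ along $G$, with exceptional divisor $E_W\cong\pr^2\times\pr^1$, the morphism $W\to X'$ contracting $E_W$ onto $G'$ via the projection $\pr^2\times\pr^1\to\pr^1$. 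The fiber of $W\to X$ over $g\in G$ is $\{g\}\times\pr^1$, which $W\to X'$ maps isomorphically onto $G'$; since $\psi$ is finite on $G'$ (curves in $G'$ have class in the $K_{X'}$-positive ray $S'$, so they cannot lie in $\psi$-fibers), the image of this fiber in $Y'$ is a curve, not a point. Hence $\psi\circ(\textup{flip})$ does not contract the fibers of $W\to X$ over $G$ and does not descend to a morphism on $X$. With this, your whole mechanism collapses: there is no conic bundle on $X$, no discriminant $\Delta$, and no degenerate fiber to produce the degree-one curve. (Your auxiliary claim that a Fano $\pr^1$-bundle with $\rho=2$ admits no small contraction is also asserted without proof, but this is moot.)

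Your final computation is exactly the paper's, and your instinct --- produce an integral curve $F$ with $-K_X\cdot F=1$ meeting $A$ but not contained in it, then blow up --- is the right one; what fails is the production of $F$. The paper obtains it by staying on $X'$ rather than descending $\psi$ to $X$: the divisor $\psi^*(\psi(A'))$ is effective, nonzero and trivial on the ray $T$ of $\psi$-fibers, so it is positive on the flipped ray $S'$; hence some fiber $F'$ of $\psi$ meets both $A'$ and $G'$. By the discrepancy inequality across the flip (\cite[Lemma 3.38]{kollarmori}, step \ref{degrees} of the paper's proof), the transform $F\subset X$ of such a fiber satisfies $-K_X\cdot F<-K_{X'}\cdot F'=2$, so $-K_X\cdot F=1$; moreover $F\cap A\neq\emptyset$ because the flip is an isomorphism near $D$, and $F\not\subseteq A$ because $F'$ meets $G'$ while $G'\cap D'=\emptyset$. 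In other words, the low-degree curve through $A$ arises from a $\psi$-fiber \emph{losing} anticanonical degree when transformed across the flip, not from a degenerate fiber of a conic bundle structure on $X$ --- no such structure need exist in case $(iii)$.
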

\noindent\emph{Proof.}
\begin{pargtwo}
We first notice that $D$ is not ample, because  $\N(D,X)\subsetneq\N(X)$. Indeed  the push-forward of one-cycles $\N(D)\to\N(X)$ is not surjective, so that 
the restriction map $\Nu(X)\to\Nu(D)$ is not injective. We have $\Nu(X)\cong H^2(X,\R)$ (because $X$ is Fano) and $\Nu(D)\hookrightarrow H^2(D,\R)$, hence the restriction map $H^2(X,\R)\to H^2(D,\R)$ is not injective as well. By the Lefschetz Theorem on hyperplane sections, $D$ cannot be ample (recall that $n\geq 3$).

Since $D$ is nef and non-ample, and $\rho_X=2$, 
we conclude that $D^{\perp}\cap\NE(X)$ is an extremal ray $S$ of $\NE(X)$. Set $G:=\Lo(S)\subseteq X$.
\end{pargtwo}
\begin{pargtwo}
If $S\subset\N(D,X)$, then the contraction of $S$ sends $D$ to a point, and hence $D\subseteq G$. On the other hand $D\cdot S=0$, thus 
$D$ is the pull-back of a Cartier divisor. Therefore the target of the contraction of $S$ is $\pr^1$, $D$ is a fiber, and we are in case $(i)$.
\end{pargtwo}
\begin{pargtwo}
We assume that $S\not\subset\N(D,X)$.
 If $G\cap D\neq\emptyset$, then $D$ must intersect some irreducible 
curve $C$ with class in $S$, and this yields $C\subseteq D$, because $D\cdot C=0$. This contradicts  $S\not\subset\N(D,X)$, therefore  $G\cap D=\emptyset$. In particular, 
 the contraction of $S$ is birational. Finally $\N(G,X)\subseteq D^{\perp}$ has dimension one; this implies that the contraction of $S$ maps $G$ to points.
If  $S$ is a divisorial extremal ray, then we are in case $(ii)$.
\end{pargtwo}
\begin{pargtwo}
Suppose now that the contraction of $S$ is small; by \cite[Corollary 1.4.1]{BCHM} the flip 
$X\dasharrow X'$
of $S$ exists. 
Let $D'\subset X'$ be the strict transform of $D$, $S'$ the small extremal ray of $X'$ associated with the flip, and $G':=\Lo(S')$.

Notice that $G'\cap D'=\emptyset$, as $G\cap D=\emptyset$. 

Notice also that $X'$ has normal, $\Q$-factorial, and terminal singularities, and 
$\Sing(X')\subseteq G'$. We have $\rho_{X'}=\rho_X=2$ and $K_{X'}\cdot S'>0$; on the other hand, a curve disjoint from $G'$ has positive anticanonical degree. In particular, 
by the Cone Theorem,
$X'$ has a second extremal ray $T$ with $-K_{X'}\cdot T>0$, and 
$$\NE(X')=\R_{\geq 0}S'+\R_{\geq 0}T.$$

Since the flip $X\dasharrow X'$ is an isomorphism in a neighborhood of $D$, the linear subspace 
$\N(D',X')\cong \N(D,X)$ stays one-dimensional.
 Moreover $D'\cdot S'=0$, hence we must have
 $D'\cdot T>0$.

Let $\psi\colon X'\to Y'$ be the contraction of $T$.
Arguing as in the proof of Lemma \ref{uno}, we see that $T\not\subset\N(D',X')$.
Since the contraction of $S$ sends $G$ to points, the  
contraction of $S'$ sends $G'$ to points. This implies
that every curve in $G'$ has class in the extremal ray $S'$.

 We deduce that $\psi$ is finite on both $D'$ and $G'$.

In particular, since  $D'\cdot T>0$,
 every non-trivial fiber of $\psi$ has dimension one. 
\end{pargtwo}
\begin{pargtwo}\label{degrees}
Let $C$ be an irreducible component of a non-trivial fiber of $\psi$. If $C\cap G'\neq\emptyset$, then:
$$
-K_{X'}\cdot C>1.
$$
Indeed if $\widetilde{C}\subset X$ is the strict transform of $C$, we have $-K_{X'}\cdot C> -K_X\cdot\widetilde{C}\geq 1$. This follows from \cite[Lemma 3.38]{kollarmori}, see \cite[Lemma 3.8]{31} for an explicit proof.
\end{pargtwo}
\begin{pargtwo} We show that $\psi$ is of fiber type.
By contradiction, 
 assume that $\psi$ is birational.

Suppose that $\Exc(\psi)\cap G'\neq\emptyset$, and let $F_0$ be  an irreducible component of a fiber of $\psi$ which intersects $G'$. Notice that $F_0\not\subseteq G'$ (since $\psi$ is finite on $G'$), 
in particular $F_0\not\subseteq \Sing(X')$. We get $-K_{X'}\cdot {F}_0\leq 1$ by \cite[Lemma 1.1]{ishii}, and
$-K_{X'}\cdot {F}_0>1$ 
by \ref{degrees}, a contradiction.

Therefore $\Exc(\psi)\cap G'=\emptyset$, so that $\Exc(\psi)$ is contained in the smooth locus of $X'$. By \cite[Theorem 1.2]{wisn}, $\Exc(\psi)$ is a divisor. We have
$\Exc(\psi)\cdot S'=0$ and $\Exc(\psi)\cdot T<0$, hence $-\Exc(\psi)$ is nef, a contradiction.

Thus $\psi$ is of fiber type. 
\end{pargtwo}
\begin{pargtwo}
We show that $\psi \colon X'\to Y'$ is a $\mathbb{P}^1$-bundle with $X'$ and $Y'$ smooth, so that we are in case $(iii)$.

Since $\Sing(X')$ cannot dominate $Y'$, the general fiber of $\psi$ is a smooth rational curve of anticanonical degree $2$. 

Suppose that there is a fiber $F$ of $\psi$ such that
the corresponding one-cycle is not integral
and $G'\cap F\neq\emptyset$.
Then there is an irreducible component $C$ of $F$, such that 
$-K_{X'}\cdot C \le 1$. 
If $C\cap G'=\emptyset$, then $-K_{X'}$ is Cartier in a neighbourhood of $C$, and we must have 
$-K_{X'}\cdot C = 1$ and $-K_{X'}\cdot (F-C) = 1$ (where we consider $F$ as a one-cycle). 
Thus, up to replacing $C$ with another irreducible component of 
$F$,
we may assume that $C\cap G'\neq\emptyset$, and 
$-K_{X'}\cdot C \le 1$; but this contradicts \ref{degrees}.

By \cite[Theorem II.2.8]{kollar}, 
$\psi$ is smooth in a neighbourhood of $\psi^{-1}(\psi(G'))$. Thus
$\Sing(X')=\psi^{-1}(\Sing(Y')\cap\psi(G'))$, because
 $\Sing(X')\subseteq G'$. This implies that 
$\Sing(X')=\emptyset$, since $\psi$ is finite on $G'$. In particular, $Y'$ is smooth (see \cite[Theorem 4.1(2)]{AWaview} and references therein), $\psi$ is a conic bundle,
and either
the discriminant locus $\Delta$ of $\psi$ has pure codimension one or $\Delta=\emptyset$. If $\Delta\neq\emptyset$, then $\Delta$ is an ample divisor in $Y'$, because 
$\rho_{Y'}=1$. Hence
$\psi(G')\cap \Delta\neq\emptyset$ ($\psi$ is finite on $G'$ and $\dim G'\ge 1$), a contradiction.
This proves that $\psi \colon X'\to Y'$ is a $\mathbb{P}^1$-bundle with $X'$ and $Y'$ smooth.
\end{pargtwo}
\begin{pargtwo}
Suppose now that we are in case $(iii)$, and that
there is a smooth irreducible subvariety $A\subset D$ of codimension $2$, such that the blow-up of $X$ along $A$ is Fano. We show that this gives a contradiction.

Let 
 $A'\subset {D'}$ the strict transform of $A$, and
 let us consider the divisor $\psi^*(\psi(A'))$ in $X'$. Since $\psi^*(\psi(A'))\cdot T=0$, we must have $\psi^*(\psi(A'))\cdot S'>0$.
Therefore we find a fiber $F'$ of $\psi$ which intersects both $A'$ and $G'$. 

Let $F\subset X$ be the strict transform of $F'$. As in  \ref{degrees}, we see that $-K_X\cdot F< -K_{X'}\cdot F'=2$, so that 
$-K_X\cdot F=1$. On the other hand $F\cap A\neq\emptyset$ and $F\not\subseteq A$, hence the strict transform of $F$ in the blow-up of $X$ along $A$ should have non-positive anticanonical degree, a contradiction.
\qed
\end{pargtwo}

\smallskip

We will show that 
any Fano manifold $X$ 
with $\rho_X=3$
having a 
prime divisor $D$ with $\dim\N(D,X)=1$
is isomorphic to one of the varieties described below.

We first recall the following definition for the reader's convenience.
Let $\mathscr{L}$ be an ample line bundle on a normal projective variety
$Z$. 
Consider the $\pr^1$-bundle  $Y=\pr_Z(\mathscr{O}_Z\oplus\mathscr{L})$, with natural projection
$\pi \colon Y\to Z$. The tautological line bundle $\mathscr{O}_Y(1)$ is semiample on $Y$.
For $m\gg 0$, the linear system $|\mathscr{O}_Y(m)|$ induces a birational morphism $Y \to Y_0$ onto a normal projective variety, contracting 
the divisor $E=\pr_Z(\mathscr{O}_Z)\cong Z\subset Y$ corresponding to the projection 
$\mathscr{O}_Z\oplus\mathscr{L}\twoheadrightarrow \mathscr{O}_Z$ to a point.
Following \cite{beltrametti_sommese}, we call $Y_0$ the \emph{normal generalized cone over the base $(Z,\mathscr{L})$}.
\begin{example}\label{ex2}
Fix integers $n$, $a$, and $d$, such that $n\ge 3$, $a\ge 0$, and $d\geq 1$.

Let $Z$ be a Fano manifold of dimension $n-1$, with $\rho_Z=1$.
Let $\mathscr{O}_Z(1)$ be the ample generator of $\textup{Pic}(Z)$.
If $m$ is an integer, then we write $\mathscr{O}_Z(m)$ for $\mathscr{O}_Z(1)^{\otimes m}$.
Let moreover $A\in|\mathscr{O}_Z(d)|$ be a smooth hypersurface.

Set $Y:=\pr_{Z}(\ol_{Z}\oplus\ol_{Z}(a))$, and let $\pi\colon Y\to Z$ be the $\pr^1$-bundle. 

If $a>0$,
then there is a birational contraction $Y\to Y_0$ sending a divisor $G_Y$ to a point, where $Y_0$ is the normal generalized cone over $(Z,\mathscr{O}_Z(a))$. We have $G_Y\cong Z$, $G_Y$ is a section of $\pi$, and $\mathcal{N}_{G_Y/Y}\cong\ol_{Z}(-a)$.

If $a=0$, then $Y\cong Z\times\pr^1$. Let $G_Y$ be a fiber of 
$Y\to \pr^1$. We have $G_Y\cong Z$, $G_Y$ is a section of $\pi$, and $\mathcal{N}_{G_Y/Y}\cong\ol_{Z}$.

Let now $\wi{G}_Y\cong Z\subset Y$ be a section of $\pi$ with normal bundle $\mathcal{N}_{\wi{G}_Y/Y}\cong\ol_{Z}(a)$.
Notice that $G_Y\cap \wi{G}_Y=\emptyset$ if $a>0$. If $a=0$, we choose $\wi{G}_Y$ such that $G_Y\cap \wi{G}_Y=\emptyset$.
Set $A_Y:=\wi{G}_Y\cap\pi^{-1}(A)$.
Finally let $\sigma\colon X\to Y$ be the blow-up of $A_Y$ (see Figure \ref{fig1}).

Then $X$ is a smooth projective variety of dimension $n$, with $\rho_X=3$. 

\stepcounter{remarktwo}
\begin{figure}\begin{center} 
\scalebox{0.4}{\includegraphics{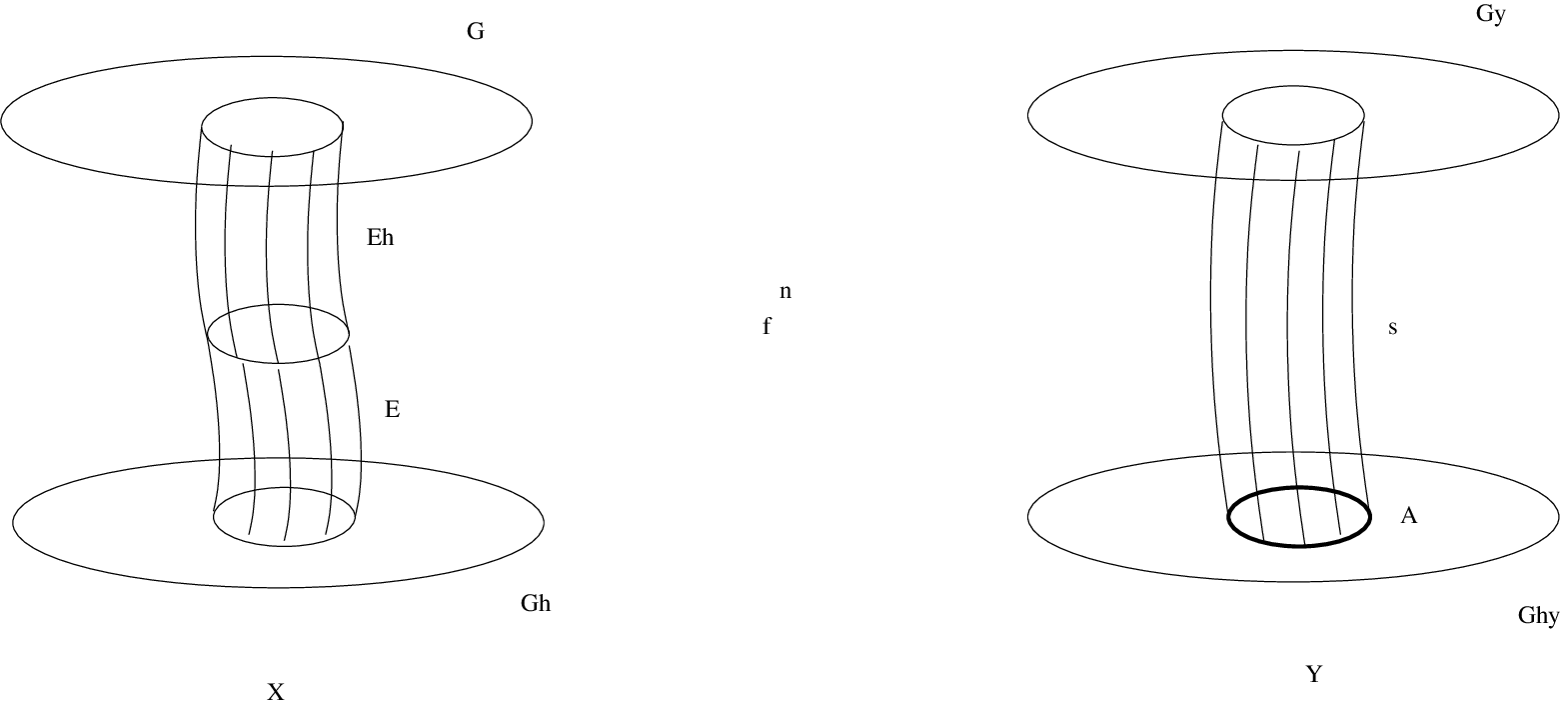}}
\caption{The blow-up $\sigma$.}\label{fig1}
\end{center}\end{figure}

Let $G,\wi{G}\subset X$ be the tranforms of $G_Y,\wi{G}_Y\subset Y$, respectively. Then $G\cong \wi{G}\cong Z$, $\mathcal{N}_{G/X}\cong\ol_{Z}(-a)$, and $\mathcal{N}_{\wi{G}/X}\cong\ol_{Z}(-(d-a))$.

The composition $\ph:=\pi\circ\sigma\colon X\to Z$ is a conic bundle, and has a second factorization:
 $$\xymatrix{ &X\ar[dl]_{\wi{\sigma}}\ar[dr]^{\sigma}\ar[dd]^{\ph}  &\\
 {\wi{Y}}\ar[dr]_{\wi{\pi}}&& Y\ar[dl]^{\pi}\\
 & Z &
 }$$
 where $\wi{Y}=\pr_{Z}(\ol_{Z}\oplus\ol_{Z}(d-a))$.
The images $\wi{\sigma}(G)$ and $\wi{\sigma}(\wi{G})$ are disjoint sections of the $\pr^1$-bundle $\wi{\pi}\colon\wi{Y}\to Z$, with normal bundles $\mathcal{N}_{\wi{\sigma}(G)/\wi{Y}}=\ol_{Z}(d-a)$ and $\mathcal{N}_{\wi{\sigma}(\wi{G})/\wi{Y}}=\ol_{Z}(a-d)$. Moreover $\wi{\sigma}$ is the blow-up of $\wi{Y}$ along the intersection
$\wi{\sigma}(G)\cap\wi{\pi}^{-1}(A)$.

Set $E:=\Exc(\sigma)$ and $\wi{E}:=\Exc(\wi{\sigma})$, and
let $F\subset E$ and $\wi{F}\subset\wi{E}$ be exceptional fibers of $\sigma$ and $\wi{\sigma}$, respectively.
Let moreover $C_Z\subset Z$ be an irreducible curve having minimal intersection with $\mathscr{O}_Z(1)$, and set $\delta:=\mathscr{O}_Z(1)\cdot C_Z$. Finally 
 let $C_G\subset G$ and $C_{\wi{G}}\subset \wi{G}$ be curves corresponding to $C_Z$. We have the following relations of numerical equivalence:
\renewcommand{\theequation}{\theremarktwo}
\stepcounter{remarktwo}
\begin{equation}\label{equiv}
C_G+a\delta\wi{F}\equiv C_{\wi{G}}+(d-a)\delta F,\qquad dG+a\wi{E}\equiv d\wi{G}+(d-a)E,
\end{equation}
and the relevant intersections are shown in table \ref{table}, where $i_Z$ is the index of $Z$, \textit{i.e.} the integer defined by $\mathscr{O}_Z(-K_Z)\cong\mathscr{O}_Z(i_Z)$.
\stepcounter{remarktwo}
\begin{table}\caption{Intersection table in $X$.}\label{table}
$$\begin{array}{|r|r|r|c|c|c|}
\hline
&  F& \wi{F}& C_G& C_{\wi{G}}&C_G+a\delta\wi{F}\\\hline
E&-1&1&0&d\delta&a\delta\\\hline
\wi{E}&1&-1&d\delta&0&(d-a)\delta\\\hline
G&0&1&-a\delta&0&0\\\hline
\wi{G}&1&0&0&-(d-a)\delta&0\\\hline
-K_X&1&1&(i_Z-a)\delta&(i_Z-(d-a))\delta&i_Z\delta\\ \hline
\end{array}
$$
\end{table}
\begin{lemma}\label{NE(X)}
The cone $\NE(X)$ is closed and polyhedral.

If $a=0$, then $\NE(X)$ has three extremal rays, generated by the classes of $F$, $\wi{F}$, and $C_{\wi{G}}$, with loci $E$, $\wi{E}$, and $\wi{G}$ respectively.

If $a\ge d$, then $\NE(X)$ has three extremal rays, generated by the classes of $F$, $\wi{F}$, and $C_G$, with loci $E$, $\wi{E}$, and $G$ respectively.

If instead $0<a<d$, then $\NE(X)$ is non-simplicial and 
has $4$ extremal rays, generated by the classes of  $F$, $\wi{F}$, $C_G$, and $C_{\wi{G}}$, with loci $E$, $\wi{E}$, $G$ and $\wi{G}$ respectively. 
\end{lemma}
\begin{proof}
Set $R:=\R_{\geq 0} [F]$, $\wi{R}:=\R_{\geq 0} [\wi{F}]$, $S:=\R_{\geq 0} [C_G]$, and $\wi{S}:=\R_{\geq 0} [C_{\wi{G}}]$. We already know that $R$ and $\wi{R}$ are extremal rays of $\overline{\NE}(X)$, and that $R+\wi{R}$ is a face.

Since $\mathscr{E}:=\ol_{Z}\oplus\ol_{Z}(a)$ is nef and non-ample,  $\ol_Y(\wi{G}_Y)=\ol_{\pr_Z(\mathscr{E})}(1)$ is nef and non-ample in $Y$, and the same holds for $\sigma^*(\wi{G}_Y)=\wi{G}+E$ in $X$. It is not difficult to see that
$(\wi{G}+E)^{\perp}\cap \overline{\NE}(X)=R+S$. 
In particular, this shows that $S$ is an extremal ray of $\overline{\NE}(X)$, so that there exists a nef divisor $H$ such that $H^{\perp}\cap \overline{\NE}(X)=S$. 

If $0 < a < d$, then similarly as before 
$\wi{\sigma}(G)$ is nef and non-ample in $\wi{Y}$, so that $\wi{\sigma}^*(\wi{\sigma}(G))=G+\wi{E}$ is nef in $X$. The plane
 $(G+\wi{E})^{\perp}$ intersects $\overline{\NE}(X)$ along the face
$\wi{R}+\wi{S}$; in particular, $\wi{S}$ is an extremal ray.

Finally, the divisor
$$D:=\left(-\wi{G}\cdot C_{\wi{G}}\right)H + \left(H\cdot C_{\wi{G}}\right)\wi{G}$$
is nef, it is not numerically trivial, and $D\cdot S=D\cdot\wi{S}=0$.
Therefore $D^{\perp}\cap\overline{\NE}(X)=
S+\wi{S}$, and we get the statement in the case $0 < a < d$.

If instead $a \ge d$, then
$\wi{\sigma}(\wi{G})$ is nef, non-ample in $\wi{Y}$, so that
$\wi{G} = \wi{\sigma}^*(\wi{\sigma}(\wi{G}))$ is nef, and does not intersect $G$. We get $\wi{G}^{\perp}\cap\overline{\NE}(X)=
\wi{R}+S$, which gives the statement in the case $a \ge d$.

The case $a=0$ follows from the case $a=d$, see Remark \ref{symm}.
\end{proof}
A straightforward consequence of Lemma \ref{NE(X)} is the following.
\begin{remark}
$X$ is Fano if and only if $a\leq i_Z-1$ and $d-a\leq i_Z-1$.

Indeed, since $\NE(X)$ is closed and polyhedral, $X$ is Fano if and only if 
 every extremal ray of $\NE(X)$ has positive intersection with the anticanonical divisor (see Table \ref{table}).
\end{remark}
\begin{remark}\label{symm}
Suppose that $a\leq d$. Then by choosing $a'=d-a$, we get a variety $X'$ isomorphic to $X$, with the roles of $Y$ and $\wi{Y}$ interchanged.
\end{remark}
Example \ref{ex} is a special case of this example, with $Z=\pr^{n-1}$, and the additional condition $a\leq d$.
\end{example}
We are now in position to prove the main result of this section; see \cite{kentofujita} for a related result.
\begin{thm}\label{main2}
Let $X$ be a Fano manifold of dimension $n\geq 3$ and 
Picard number $\rho_X=3$, and let $D\subset X$ be a prime divisor with $\dim\N(D,X)=1$. 
Then $X$ is isomorphic to one of the varieties described in Example \ref{ex2}.
\end{thm}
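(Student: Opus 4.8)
The plan is to exploit Lemma \ref{uno}, which already reduces the $\rho_X=3$ case to the precise combinatorial situation described in part $(iii)$: a conic bundle $\ph\colon X\to Z$ with $Z$ smooth, Fano, and $\rho_Z=1$, factoring through two blow-ups $\sigma\colon X\to Y$ and $\wi\sigma\colon X\to \wi Y$ of smooth codimension-$2$ centers, with $\pi,\wi\pi\colon Y,\wi Y\to Z$ both conic bundles and $A_Y\subset D_Y=\sigma(D)$. The goal is to identify $Y$, $\wi Y$, the centers $A_Y$, $A_{\wi Y}$, and the divisor $D$ with the data of Example \ref{ex2}. First I would analyze the conic bundles $\pi$ and $\wi\pi$: since $Z$ has Picard number one and $Y$ has Picard number two (being the blow-up target lies between $\rho_X=3$ and $\rho_Z=1$), and since $\pi$ is finite on $D_Y$, I expect $\pi$ to be a $\pr^1$-bundle. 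The key point is to rule out genuine (non-$\pr^1$-bundle) conic bundle behavior: a nontrivial discriminant would force a reducible fiber, but such a fiber would meet $D_Y$ in a curve, contradicting that $\dim\N(D_Y,Y)=1$ forces $D_Y$ to behave rigidly. Hence $\pi$ is a $\pr^1$-bundle, and then $D_Y$, being a section-like prime divisor with $\rho_{D_Y}=1$, should be a section of $\pi$.

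Next I would write $Y=\pr_Z(\mathscr E)$ for a rank-two bundle $\mathscr E$, normalized so that $D_Y$ corresponds to a quotient; the condition $\dim\N(D_Y,Y)=1$ together with $\rho_Z=1$ pins down $\mathscr E\cong\ol_Z\oplus\ol_Z(a)$ for some $a\ge 0$ (after twisting), by the standard theory of $\pr^1$-bundles over a base of Picard number one. This gives the section $\wi G_Y$ with normal bundle $\ol_Z(a)$ as in Example \ref{ex2}. The center $A_Y$ is a smooth codimension-$2$ subvariety of $Y$ contained in $D_Y$; using $A_Y\subset D_Y\cong Z$ and the fact that $A_Y$ must be cut out compatibly with the second conic bundle structure $\wi\pi$, I would identify $A_Y$ with $\wi G_Y\cap\pi^{-1}(A)$ for a hypersurface $A\in|\ol_Z(d)|$, where $d$ is determined by the intersection numbers; the smoothness of $A$ follows from the smoothness of the blow-up center. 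The symmetric factorization through $\wi Y$ then forces $\wi Y\cong\pr_Z(\ol_Z\oplus\ol_Z(d-a))$, consistent with Remark \ref{symm} and the relation $\wi Y=\pr_Z(\ol_Z\oplus\ol_Z(d-a))$ stated in Example \ref{ex2}.

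The degree $d$ must then be shown to satisfy $d\ge 1$ and the Fano conditions $a\le i_Z-1$, $d-a\le i_Z-1$, which follow automatically once $X$ is known to be Fano by the computation recorded in the Remark after Lemma \ref{NE(X)}. To nail down that the $A_Y$ I construct is genuinely of the form $\wi G_Y\cap\pi^{-1}(A)$ rather than an arbitrary codimension-$2$ subvariety of the section, I would use that $\wi\sigma$ contracts $\wi E$ onto $A_{\wi Y}\subset\wi\sigma(G)$, and compare the two descriptions of $\Exc(\ph)$ over the discriminant-type locus; the compatibility of both blow-up structures forces $A_Y$ to be the preimage under the isomorphism $D_Y\xrightarrow{\sim}Z$ of a divisor in $Z$, necessarily a member of $|\ol_Z(d)|$ by degree reasons encoded in \eqref{equiv} and table \ref{table}.

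I expect the main obstacle to be precisely the identification of the blow-up center $A_Y$ as $\wi G_Y\cap\pi^{-1}(A)$ for an honest hypersurface section, i.e.\ showing that $A_Y$ lies in a section of $\pi$ and is cut out by a single divisor pulled back from $Z$. The subtlety is that Lemma \ref{uno} only tells us $A_Y\subset D_Y$; to promote this to the explicit product-type description one must use the second factorization $\wi\sigma$ in an essential way, matching the two $\pr^1$-bundle structures $\pi$ and $\wi\pi$ over $Z$ and checking that the exceptional data are interchanged correctly, as summarized in the diagram and in Remark \ref{symm}. Once both projective bundles are identified and the two centers are seen to be sections cut by $|\ol_Z(d)|$-hypersurfaces, comparison of Chern classes (equivalently, the relations \eqref{equiv}) forces the numerical invariants $a$ and $d-a$ to match, completing the isomorphism with Example \ref{ex2}.
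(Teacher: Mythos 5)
Your starting point (Lemma \ref{uno}, case $(iii)$) and your identification of the crux are correct, but each of the structural steps you then need is either justified by an argument that fails or asserted without proof. First, the smoothness of $\pi$: your reasoning that a reducible fiber ``would meet $D_Y$ in a curve'' cannot work, because $\pi$ is finite on $D_Y$, so any fiber meets $D_Y$ in finitely many points and reducible fibers are in no tension with $\dim\N(D_Y,Y)=1$. The paper's proof of smoothness needs two independent ingredients that are absent from your proposal: (a) every prime divisor of $X$ meets $E\cup\wi{E}$ (this uses $\rho_Z=1$, so that effective divisors on $Z$ are ample and meet every positive-dimensional subvariety), and (b) since $X$ is Fano, no component of a singular fiber can pass through the blow-up centers (its transform would have non-positive anticanonical degree); together these show that a nonempty discriminant would produce a divisor disjoint from $E\cup\wi{E}$, contradicting (a).

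Second, and more seriously, you assert that $D_Y$ is a section of $\pi$ and that $Y\cong\pr_Z(\ol_Z\oplus\ol_Z(a))$ follows ``by the standard theory of $\pr^1$-bundles over a base of Picard number one''. Neither claim is automatic: a prime divisor with $\dim\N(D_Y,Y)=1$ that is finite over $Z$ could a priori be a multisection, and a $\pr^1$-bundle over a Picard-number-one base need not be the projectivization of a decomposable bundle ($\pr_{\pr^2}(T_{\pr^2})$ already violates the splitting claim). The paper never identifies $D_Y$ at all; instead it (i) reduces to the case where $D_Y$ is nef, possibly replacing $D$ by another divisor and exchanging the roles of $Y$ and $\wi{Y}$ (a step missing from your plan); (ii) applies Proposition \ref{divisorial} to the extremal ray $S_Y=D_Y^{\perp}\cap\NE(Y)$, excluding the flip case precisely because the blow-up of $A_Y\subset D_Y$ is Fano; and (iii) shows via Lemma \ref{section_bundle} that the exceptional divisor $G_Y=\Lo(S_Y)$ --- not $D_Y$ --- is a section and that the bundle splits, the splitting being forced by the birational contraction $Y\to Y_0$, which kills the extension class on the thickening $2G_Y$. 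Finally, the step you yourself flag as the main obstacle --- producing a section of $\pi$ disjoint from $G_Y$ and containing $A_Y$ --- is exactly where the paper must run this machine a second time, on $\wi{Y}$ and $D_{\wi{Y}}=\wi{\sigma}(D)$ (nef or non-nef cases, Proposition \ref{divisorial} and Lemma \ref{section_bundle} again), to obtain a section $H$ of $\wi{\pi}$ disjoint from $\wi{\sigma}(G)$ whose transform in $Y$ is the required $\wi{G}_Y\supset A_Y$; ``matching the two bundle structures'' describes the desired conclusion rather than proving it. As written, the proposal reproduces the reduction already contained in Lemma \ref{uno} and then assumes the facts that constitute the actual proof.
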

Note that if $X$ is as in Example \ref{ex2}, then $G$ is a prime divisor with $\rho_G=1$, and hence $\dim\N(G,X)=1$. 

\bigskip

\noindent\emph{Proof of Theorem \ref{main2}.}
\begin{pargtwo}\label{primo}
As $\rho_X=3$, we are in case $(iii)$
of Lemma \ref{uno}, and there is a conic bundle $\ph\colon X\to Z$, finite on $D$. We keep the same notation as in Lemma \ref{uno}; in particular we recall the diagram: 
$$\xymatrix{&X\ar[dl]_{\wi{\sigma}}\ar[dd]^{\ph}\ar[dr]^{\sigma}&\\
{\wi{Y}}\ar[dr]_{\wi{\pi}}&&Y\ar[dl]^{\pi}\\
&Z&
}$$
We set $E:=\Lo(R)$ and $\wi{E}:=\Lo(\wi{R})$.
Notice that $D\neq\wi{E}$ because $\ph$ is finite on $D$, hence $D\cdot\wi{R}\geq 0$. Thus we may have $D\cdot\wi{R}>0$ (if $D\cap\wi{E}\neq\emptyset$) or $D\cdot\wi{R}=0$ (if $D\cap\wi{E}=\emptyset$). In the first case $\sigma\colon X\to Y$ and $\wi{\sigma}\colon X\to\wi{Y}$ have the same properties with respect to $X$ and $D$, so that their role is interchangeable, while in the second case the behaviour of the two blow-ups with respect to $D$ is different.
\end{pargtwo}
\begin{pargtwo}\label{int}
 We show that every prime divisor $B\subset X$ must intersect $E\cup\wi{E}$. 

We first notice that $\sigma(B)\cap \sigma(\wi{E})\neq\emptyset$ in $Y$.
Indeed, if $\pi(\sigma(B))=Z$, then the claim is obvious. Otherwise, $\sigma(B)=\pi^{-1}(\ph(B))$, and the claim follows from 
$\rho_Z=1$. Thus, if $A_Y\cap\sigma(B)\neq\emptyset$, then
$B$ intersects $E$. Otherwise, $B$ intersects $\wi{E}$.
\end{pargtwo}
\begin{pargtwo}\label{smooth}
We show that $\pi\colon Y\to Z$ is a smooth morphism. Otherwise it has a non-empty  discriminant divisor $\Delta\subset Y$, whose inverse image $\sigma^{-1}(\Delta)$ must be disjoint from $E\cup\wi{E}$ since $X$ is Fano, which contradicts \ref{int}.
\end{pargtwo}
\begin{pargtwo}\label{DYnef}
Let us consider the prime divisor $D_Y=\sigma(D)\subset Y$. We have $\N(D_Y,Y)=\sigma_*(\N(D,X))$, hence 
$\dim\N(D_Y,Y)=1$. 

We show that up to replacing $D$ with another prime divisor $D'\subset X$, we can assume that $D_Y$ is nef.

Suppose that $D_Y$ is not nef. Then $D_Y$
is the exceptional locus of a divisorial contraction 
$Y \to Y_0$
sending $D_Y$ to a point, by Remark \ref{nonnefdiv}.
The exceptional locus of the composite map $X \to Y_0$ is $D\cup E$, and is contracted to a point in $Y_0$. In particular, $D$ is not nef either (see for instance \cite[Lemma 3.39]{kollarmori}).

Let $D'\subset X$ be the pull-back of a general prime divisor in $Y_0$. Then $D'$ is a prime nef divisor in $X$, and
 $D'\cap(D\cup E)=\emptyset$. Notice that the classes $[D]$ and $[E]$ in $\Nu(X)$ cannot be proportional, therefore the planes $D^{\perp}$ and $E^{\perp}$ in $\N(X)$ are distinct. As $\N(D',X)\subseteq  D^{\perp}\cap E^{\perp}$, we get  $\dim\N(D',X)=1$.

By \ref{int}, we have $D'\cap\wi{E}\neq\emptyset$, hence $D'\cdot \wi{R}>0$. Moreover, as $D'$ is nef,
 we must have $\wi{\sigma}(D')$ nef too. 
This shows that
up to replacing $D$ with $D'$, and $R$ with $\wi{R}$, we can assume that $D_Y$ is nef.
\end{pargtwo}
\begin{pargtwo}\label{applydivisorial}
By \ref{DYnef}, we can assume that $D_Y$ is nef. Then 
Proposition  \ref{divisorial} applies,  and $D_Y^{\perp}\cap\NE(Y)$ is an extremal ray $S_Y$ of $\NE(Y)$.  Moreover, since $A_Y\subset D_Y$ and the blow-up of $Y$ along $A_Y$ is Fano, case $(iii)$ of Proposition \ref{divisorial} is excluded.

Suppose that we are in case $(i)$ of Proposition \ref{divisorial}. Then $Y\cong Z\times\pr^1$ (see for instance \cite[Lemma 4.9]{31}), and $D_Y$ is a fiber of 
$Y \to \pr^1$. Since $A_Y\subset D_Y$, $X$ is isomorphic to one of the varieties described in Example \ref{ex2}, with $a=0$.
This completes the proof of Theorem \ref{main2} in this case.
\end{pargtwo}
\begin{pargtwo} We
assume that we are in case $(ii)$ of   Proposition \ref{divisorial}, so that
the extremal ray $S_Y$ is divisorial, its contraction sends the divisor $G_Y:=\Lo(S_Y)$ to a point, and $D_Y\cap G_Y=\emptyset$. In particular, $\dim\N(G_Y,Y)=1$.

Denote by $\ol_Z(1)$ the ample generator of $\Pic(Z)$. 
By Lemma \ref{section_bundle} below, $G_Y\subset Y$ is a section of $\pi\colon Y\to Z$, and there exists an integer $a>0$ such that
$\mathcal{N}_{G_Y/Y}\cong \ol_Z(-a)$
and $Y\cong\mathbb{P}_Z(\ol_Z\oplus\ol_Z(a))$.
\end{pargtwo} 
\begin{pargtwo}\label{secondsectionY}
Suppose that there exists a section $\wi{G}_Y\subset Y$ of $\pi\colon Y\to Z$, disjoint from $G_Y$, and containing $A_Y$.
We claim that this implies the statement. 
Indeed  we have
$$\ol_Y(\wi{G}_Y)=\ol_Y(G_Y)\otimes\pi^*(\ol_Z(r))$$ for some $r\in\mathbb{Z}$, and since $G_Y\cap\wi{G}_Y=\emptyset$, restricting to $G_Y$ we get $r=a$, and restricting to $\wi{G}_Y$ we get $\mathcal{N}_{\wi{G}_Y/Y}\cong\ol_Z(a)$. Thus 
 $X$ is  one of the varieties described in Example \ref{ex2}, for $a>0$.
\end{pargtwo}
\begin{pargtwo}
Let $G\subset X$ be the strict  transform of  $G_Y$. We have
$G\cap (D\cup E)=\emptyset$ since 
$G_Y\cap D_Y=\emptyset$, 
and hence
$G\cap\wi{E}\neq\emptyset$ by \ref{int}.

Let us consider now the image $\wi{\sigma}(G)\subset\wi{Y}$. Then $\wi{\sigma}(G)$ is a section of $\wi{\pi}$, so that $\wi{\sigma}(G)\cong Z$
and $\rho_{\wi{\sigma}(G)}=1$. Moreover $\wi{\sigma}(G)$
contains the 
center $A_{\wi{Y}}\subset \wi{Y}$ of the blow-up $\wi{\sigma}$.
This also implies  that $\wi{Y}$ is Fano, 
as in the proof of Lemma \ref{uno}. 
\end{pargtwo}
\begin{pargtwo}
Suppose now that there exists a section $H\subset\wi{Y}$ of
$\wi{\pi}\colon \wi{Y}\to Z$, disjoint from 
$\wi{\sigma}(G)$. Then its strict  transform in $Y$ yields a section of   $\pi\colon Y\to Z$, disjoint from $G_Y$, and containing $A_Y$, and this implies the statement by \ref{secondsectionY}.

In order to construct such $H$,
we consider the divisor  $D_{\wi{Y}}:=\wi{\sigma}(D)\subset \wi{Y}$. Notice that 
$\dim\N(D_{\wi{Y}},\wi{Y})=1$, and that the two divisors $D_{\wi{Y}}$ and $\wi{\sigma}(G)$ are distinct, because $G\cap D=\emptyset$ in $X$.
\end{pargtwo}
\begin{pargtwo}\label{secondsection}
Suppose first that  $D_{\wi{Y}}$ is not nef. 
By Remark \ref{nonnefdiv}, $D_{\wi{Y}}$ is the exceptional locus of a divisorial contraction sending $D_{\wi{Y}}$ to a point.
 Then, by Lemma \ref{section_bundle}, $D_{\wi{Y}}$ is a section of $\wi{\pi}$.

Moreover we have $D_{\wi{Y}}\cdot C\geq 0$ for every curve $C\subset \wi{\sigma}(G)$, because $D_{\wi{Y}}\neq\wi{\sigma}(G)$ and $\rho_{\wi{\sigma}(G)}=1$. Since 
 $D_{\wi{Y}}$ is not nef, the divisors $D_{\wi{Y}}$ and $\wi{\sigma}(G)$ must be disjoint, and we can set $H:=D_{\wi{Y}}$.
\end{pargtwo}
\begin{pargtwo}
We assume  now that $D_{\wi{Y}}$ is nef. 
Then 
Proposition \ref{divisorial} applies,  and $D_{\wi{Y}}^{\perp}\cap\NE(\wi{Y})$ is an extremal ray $S_{\wi{Y}}$ of $\NE(\wi{Y})$. 

We claim that case $(iii)$ of Proposition \ref{divisorial} cannot happen, namely that $S_{\wi{Y}}$ cannot be small. Indeed this follows from Proposition \ref{divisorial} if $A_{\wi{Y}}\subset D_{\wi{Y}}$, namely if  $D\cap \wi{E}\neq\emptyset$. If instead $D\cap \wi{E}=\emptyset$, then
$\wi{\sigma}(G)\cap D_{\wi{Y}}=\emptyset$, hence 
 the contraction of $S_{\wi{Y}}$ sends $\wi{\sigma}(G)$ to a point, and it cannot be small.

Suppose that we are in case $(i)$ of Proposition \ref{divisorial}. As in \ref{applydivisorial} we see that $\wi{Y}\cong Z\times\pr^1$, and  
$D_{\wi{Y}}$ and  $\wi{\sigma}(G)$ are fibers of the projection
 $\wi{Y}\to \pr^1$. Thus we can define $H$ to be
 a general fiber of the projection $\wi{Y}\to\pr^1$.

Finally, suppose  that  we are in case $(ii)$ of Proposition \ref{divisorial}, so that
the contraction of $S_{\wi{Y}}$ is divisorial. By Lemma \ref{section_bundle}, $\Lo(S_{\wi{Y}})$ is a section of $\wi{\pi}$. So if $\Lo(S_{\wi{Y}})\cap\wi{\sigma}(G)=\emptyset$, we set $H:=\Lo(S_{\wi{Y}})$.

If instead $\Lo(S_{\wi{Y}})\cap\wi{\sigma}(G)\neq\emptyset$, then we get 
 $\Lo(S_{\wi{Y}})=\wi{\sigma}(G)$, because 
$\rho_{\wi{\sigma}(G)}=1$.  Therefore by Lemma \ref{section_bundle}, there is a section $H$ of $\wi{\pi}$ disjoint from $\wi{\sigma}(G)$. 
\qed
\end{pargtwo}

\smallskip

The following result is certainly well-known to experts. We include a proof for lack of references.
\begin{lemma}\label{section_bundle}
Let $Y$ and $Z$ be smooth connected projective varieties, and let 
$\pi : Y \to Z$ be a $\pr^1$-bundle. Let $\psi : Y \to Y_0$ be a birational morphism onto a projective variety 
sending an effective and reduced divisor $G$ to points. Then 
$Y\cong\mathbb{P}_Z(\ol_Z\oplus\mathscr{M})$ for some 
line bundle 
$\mathscr{M}$ on $Z$ so that $G$ identifies with the section of 
$\pi$ corresponding to $\ol_Z\oplus\mathscr{M}\twoheadrightarrow\mathscr{M}$. Moreover, 
$\mathscr{M}^{\otimes -1}$ is ample.
\end{lemma}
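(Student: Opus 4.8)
The plan is to first show that $G$ is a section of $\pi$, and then to identify $Y$ with a split projectivized bundle. Let $H_0$ be an ample divisor on $Y_0$ and set $H:=\psi^*H_0$, a nef divisor on $Y$. Since $\psi$ contracts $G$ to a finite set of points, $H|_G$ is numerically trivial; since $\psi$ is birational, a general fiber $f$ of $\pi$ is not contracted, so $H\cdot f>0$. In particular no fiber of $\pi$ is contained in $G$, and the restriction $\pi|_G\colon G\to Z$ is finite and surjective of degree $d:=G\cdot f\ge 1$. Thus $G$ is a multisection of $\pi$, and the whole point is to prove $d=1$.

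For this I would reduce to a surface. Let $\Gamma\subset Z$ be a general complete intersection curve of pull-backs of very ample divisors, and set $S:=\pi^{-1}(\Gamma)$. Since $\pi$ is a $\mathbb{P}^1$-bundle, $S$ is a smooth geometrically ruled surface over $\Gamma$, and for general $\Gamma$ the curve $B:=G\cap S$ is irreducible (by Bertini) and is a $d$-section of $S\to\Gamma$. Now $B\subset G$ is contracted by $\psi$, while $\psi|_S\colon S\to\psi(S)$ is a birational morphism from a smooth projective surface onto a surface (as $S\not\subset G$), contracting the irreducible curve $B$ to points; hence $B^2<0$ by the negativity of contracted curves on smooth surfaces (Hodge index theorem applied to $B$ and $\psi|_S^*H_0$). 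But on a geometrically ruled surface the only irreducible curve of negative self-intersection is the minimal section. Therefore $B$ is a section, $d=1$, and $G$ is a section of $\pi$. This is the main obstacle: extracting $d=1$ from the contraction hypothesis, and the surface reduction together with the classical description of negative curves on ruled surfaces is what makes it work. The same remark shows that $G$ is irreducible, since a ruled surface carries at most one negative irreducible curve, so the components of $B=G\cap S$ cannot all be negative sections unless there is only one.

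It remains to produce the bundle structure. Since $\pi$ now admits the section $G\cong Z$, we have $Y\cong\mathbb{P}_Z(\mathcal{E})$ for a rank two bundle $\mathcal{E}$, and after twisting $\mathcal{E}$ we may assume $G$ corresponds to a surjection $\mathcal{E}\twoheadrightarrow\mathscr{M}$ with kernel $\mathscr{O}_Z$, so that $\mathcal{N}_{G/Y}\cong\mathscr{M}$ and there is an extension $0\to\mathscr{O}_Z\to\mathcal{E}\to\mathscr{M}\to 0$. Because $\psi$ is an isomorphism away from $G$ and contracts the irreducible divisor $G$ to points, we have $\Exc(\psi)=G$, the relative Picard number of $\psi$ is one, and $-G$ is $\psi$-ample; restricting to $G$ gives that $\mathscr{M}^{-1}\cong(-G)|_G$ is ample, which is the last assertion. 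Finally, the extension above splits because its class lies in $\operatorname{Ext}^1(\mathscr{M},\mathscr{O}_Z)=H^1(Z,\mathscr{M}^{-1})$, which vanishes by Kodaira vanishing since $\mathscr{M}^{-1}$ is ample (in the situations where the lemma is applied $\dim Z\ge 2$; when $\dim Z=1$ one has $Z\cong\mathbb{P}^1$ and the vanishing is immediate). Hence $\mathcal{E}\cong\mathscr{O}_Z\oplus\mathscr{M}$, so $Y\cong\mathbb{P}_Z(\mathscr{O}_Z\oplus\mathscr{M})$ and $G$ is the section attached to $\mathscr{O}_Z\oplus\mathscr{M}\twoheadrightarrow\mathscr{M}$, as desired.
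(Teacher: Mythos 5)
The first half of your proof --- restricting to the ruled surface $S=\pi^{-1}(\Gamma)$ over a general curve $\Gamma\subset Z$, extracting negativity of the contracted curve via Hodge index, and then using the classification of irreducible curves on geometrically ruled surfaces to conclude that the only negative curve is the minimal section --- is essentially identical to the paper's argument that $G$ is a section. The only blemish is a small circularity: Bertini irreducibility for $B=G\cap S$ presupposes $G$ irreducible, which you only justify afterwards; this is harmless, since one can (as the paper does) run the negativity argument on each irreducible component of the reduced curve $G\cap S$ and conclude that every component equals the minimal section $C_0$, so that $G\cap S=C_0$.

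The second half, however, has a genuine gap, and it sits exactly where the content of the lemma lies. First, ``$\psi$ is an isomorphism away from $G$'' is not a hypothesis: $\psi$ is only assumed birational and to send $G$ to points, so $\Exc(\psi)$ may be strictly larger than $G$; and even when $\Exc(\psi)=G$, the claim that $-G$ is $\psi$-ample is not a formal property of an arbitrary birational morphism (it is automatic for an elementary contraction of a $K$-negative extremal ray, but $\psi$ is not assumed to be one) --- it is essentially equivalent to the ampleness of $\mathscr{M}^{\otimes -1}$ you are trying to prove. Second, and fatally, the splitting step is wrong: $\Ext^1(\mathscr{M},\ol_Z)\cong H^1(Z,\mathscr{M}^{\otimes -1})$ is the first cohomology of an \emph{ample} line bundle, and Kodaira--Akizuki--Nakano vanishing says nothing about it. Kodaira vanishing gives $H^1(Z,\mathscr{M})=0$ for $\dim Z\ge 2$ (inverse of an ample bundle) and $H^1(Z,K_Z\otimes\mathscr{M}^{\otimes -1})=0$, but $H^1$ of an ample line bundle is nonzero in general: take $Z$ a curve of genus $\ge 2$ with $\mathscr{M}^{\otimes -1}=\ol_Z(p)$, or, in dimension two, $Z$ a product of two such curves with $\mathscr{M}^{\otimes -1}=\omega_Z$, which is ample with $h^1(Z,\omega_Z)=q>0$; note that the lemma does not assume $Z$ Fano, and your hedge about $\dim Z$ does not address this. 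Killing this particular extension class $\alpha$ is precisely the crux, and it cannot come from a general vanishing theorem; it must use the contraction. The paper does this by restricting to the first infinitesimal neighbourhood $2G$: by the lemma of Druel quoted there, $\Pic(2G)\cong\Pic(G)\oplus H^1(Z,\mathscr{M}^{\otimes -1})$, with $\ol_Y(G)\otimes\pi^*\mathscr{M}^{\otimes -1}$ restricting to $(0,\alpha)$; since $G$ is contracted to points, $\psi^*\mathscr{H}$ (for $\mathscr{H}$ ample on $Y_0$) is trivial on $2G$, forcing $m\alpha=0$ and hence $\alpha=0$. Only after the splitting does the paper obtain ampleness of $\mathscr{M}^{\otimes -1}$, from $\psi^*\mathscr{H}|_{G'}\cong\mathscr{M}^{\otimes -m}$ on the second section $G'$ disjoint from $G$. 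Your order of deduction (ampleness first, splitting second) collapses once these two steps are removed, so the proof is incomplete at its main point.
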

\begin{proof}
By replacing $\psi$ with its Stein factorization, we may assume that $Y_0$ is normal and that $\psi$ has connected fibers.

We show that $G$ is a section of $Y \to Z$.
Notice that $\pi$ is finite on $G$. 

Let $B \subset Z$ be a general smooth connected curve, and set $S:=\pi^{-1}(B)$. Then $G\cap S$ is a reduced curve;
let $C$ be an irreducible component of $G\cap S$. Moreover let $C_0\subset S$ be a minimal section, and $f\subset S$ a fiber of $\pi$. Then $C\neq f$. Set $e=-C_0^2$.
 
Suppose that $C\neq C_0$. 
Then
$C\equiv aC_0+bf$ where $a \in \mathbb{Z}_{>0}$, 
$b\ge ae$ if $e\ge 0$, and $2b\ge ae$ if $e< 0$ (see \cite[Propositions V.2.20 and V.2.21]{hartshorne}). Thus:
$$C^2=(aC_0+bf)^2=-a^2e+2ab=a(-ae+2b)\ge 0.$$
On the other hand, the restriction of $\psi$ to $S$ induces a birational morphism $\psi_{|S}\colon S \to \psi(S)$ sending $C$ to a point, and hence $C^2<0$, yielding a contradiction. 
Thus $C=C_0$, hence $G\cap S=C_0$.
This completes the proof of the first assertion. 

Set $\mathscr{G}:=\pi_*\ol_Y(G)$. Then $\mathscr{G}$ is a locally free sheaf of rank $2$ that fits into a short exact sequence
$$0 \to \ol_Z \to \mathscr{G} \to \mathscr{M} \to 0$$
corresponding to a class $\alpha \in H^1(Z,\mathscr{M}^{\otimes -1})$,  
$Y$ identifies with $\mathbb{P}_Z(\mathscr{G})$, $\ol_Y(G)$ with the tautological line bundle $\ol_{\mathbb{P}_Z(\mathscr{G})}(1)$,
and $G$ corresponds to $\mathscr{G} \twoheadrightarrow \mathscr{M}$. 

We denote by $2G$ the non-reduced closed subscheme of $Y$ defined by the ideal sheaf $\ol_Y(-2G)$.
By \cite[Lemme 3.2 and Lemme 3.3]{druel04} we have 
$$\textup{Pic}(G)\oplus H^1(Z,\mathscr{M}^{\otimes -1})\cong  \textup{Pic}(2G)$$
and,  under the above isomorphism,  $(0,\alpha)$ maps to
the class of  $(\ol_{\mathbb{P}_Z(\mathscr{G})}(1)\otimes\pi^*\mathscr{M}^{\otimes -1})_{|2G}$.

Let $\mathscr{H}$ be an ample line bundle on $Y_0$. Then there exists 
$m\in\mathbb{Z}_{>0}$ such that 
$\psi^*\mathscr{H}\cong\ol_{\mathbb{P}_Z(\mathscr{G})}(m)\otimes\pi^*\mathscr{M}^{\otimes -m}$. This implies that
$(\ol_{\mathbb{P}_Z(\mathscr{G})}(m)\otimes\pi^*\mathscr{M}^{\otimes -m})_{|2G}\cong\ol_{2G}$. Hence we must have $\alpha = 0$, and 
$\mathscr{G}\cong \ol_Z\oplus\mathscr{M}$. Let $G'$ be the section of $Y \to Z$ corresponding to $\mathscr{G}\cong \ol_Z\oplus\mathscr{M}\twoheadrightarrow \ol_Z$. Then $G\cap G'=\emptyset$, therefore ${\psi^*\mathscr{H}}_{|G'}$ is ample.
But ${\psi^*\mathscr{H}}_{|G'}\cong \mathscr{M}^{\otimes -m}$ under the isomorphism $G'\cong Z$. This completes the proof of the lemma.
\end{proof}
\section{Fano manifolds with a locally unsplit family of rational curves of anticanonical degree $n$}\label{proofs}
In this section, we prove Theorem \ref{result} and Proposition \ref{prop_main}.
We start with the following observations.
\begin{remark}\label{locuns}
Let $V$ be a locally unsplit dominating family of rational curves on a smooth projective variety $X$ of dimension $n$, and suppose that the curves of the family have anticanonical degree $n$.  Let $x$ be a general point, and let $\Lo(V_x)\subseteq X$ be the union of all curves parametrized by $V_x$.
Then by \cite[Corollaries IV.2.6.3 and II.4.21]{kollar} we have that  $\Lo(V_x)$ is a divisor and $\N(\Lo(V_x),X)=\R[V]$.
\end{remark}
\begin{remark}\label{deflX}
Let $X$ be a Fano manifold, and recall that  $l_X$
is the minimal anticanonical degree of a locally unsplit dominating family of rational curves in $X$.

If $x\in X$ is a general point, then every irreducible rational curve $C$ through $x$ has anticanonical degree at least $l_X$, see \cite[Theorem IV.2.4]{kollar}. This implies that $l_X$ can equivalently be defined as the minimal anticanonical degree of a dominating family of rational curves in $X$.
\end{remark}
\begin{proof}[Proof of Theorem \ref{result}]
Since $l_X=n$, there is a locally unsplit dominating family $V$ of rational curves of 
 anticanonical degree $n$. Thus by Remark \ref{locuns} $X$ contains a prime divisor $D$ with $\dim\N(D,X)=1$,
 so that we can apply Lemma \ref{uno}.

Since $l_X=n>2$, 
 we know that $X$ cannot have a conic bundle structure. Therefore Lemma \ref{uno} yields that $\rho_X=2$ and 
 there exists $\sigma\colon X\to Y$ such that $Y$ is smooth with $\dim Y=n$ and $\rho_Y=1$, and $\sigma$ is the blow-up of $A\subset Y$ smooth of codimension $2$.

Let $E\subset X$ be the exceptional divisor; we have $-K_X+E=\sigma^*(-K_Y)$.

Let $W$ be a locally unsplit dominating family of rational curves in $Y$, $C\subset Y$ a general curve of the family, and $\widetilde{C}\subset X$ its 
strict
transform. By \cite[Proposition II.3.7]{kollar}, 
$C\cap A = \emptyset$, and hence $E\cdot\widetilde{C}= 0$. Moreover 
$\widetilde{C}$ moves in a dominating family $\w{W}$ of rational curves in $X$, so that $-K_X\cdot \widetilde{C}\geq n$.
This yields $-K_Y\cdot C\geq n$.

We show that  $-K_Y\cdot [W]> n$.  We argue by contradiction, and assume that $-K_Y\cdot[W]=n$. Consider a curve $C$ as above. Then we have $-K_X\cdot \w{C}=n$. Since $-K_X\cdot[\w{W}]=n=l_X$, the family $\w{W}$ is locally unsplit. 

Now for a general point $x_1\in X$, again by Remark \ref{locuns}, $\Lo(\w{W}_{x_1})$ is a divisor with  $\N(\Lo(\w{W}_{x_1}),X)=\R[\w{W}]$. Since $E\cdot[\w{W}]=0$, the divisor 
$\Lo(\w{W}_{x_1})$ must be disjoint from $E$, thus its image in $Y$ is disjoint from $A$. 
This gives a contradiction, because every non-trivial effective divisor in $Y$ is ample 
 (recall that $\rho_Y=1$).

We conclude that $W$ has anticanonical degree $n+1$, 
so that $Y\cong\pr^n$ by Theorem \ref{CMSBK}.

Now if $\ell\subset\pr^n$ is a line intersecting $A$ in at least two points, and $\w{\ell}\subset X$ is its strict
transform, then $-K_X\cdot\w{\ell}<n$. We deduce that the secant variety $\Sec(A)$ of $A$ is not the whole $\pr^n$, and this implies that $A$ is degenerate (see for instance
\cite[Theorem 3.4.26]{lazI}). Finally one can check that since $X$ is Fano, the degree $d$ of $A$ is at most $n$, completing the proof.
\end{proof}
We need some preliminary results for Proposition \ref{prop_main}, which is
the main result of this section.
\begin{lemma}\label{inducedfamily}
Let $X$ and $Y$ be smooth projective varieties, and let 
$\pi \colon X \to Y$ be a surjective morphism with connected fibers. 
Let $V$ be a locally unsplit dominating family of rational curves on 
$X$. Suppose that $\pi$ does not contract any curve from $V$.
\begin{enumerate}[(1)]
\item If  $K_{X/Y}\cdot[V]\leq 0$, then  $K_{X/Y}\cdot[V]=0$.
\item Suppose moreover that $-K_{X}\cdot [V]= \dim Y+1$, and that for a general $x\in X$, and for every $[C]\in V_x$, the map $\pi_{|C}\colon C\to \pi(C)$ is birational.
Then  $Y\cong \pr^{\dim Y}$.
\end{enumerate}
\end{lemma}
\begin{proof}
Let $x$ be a general point in $X$, and let $[C]\in V_x$ be a general curve. Set $\ell:=\pi(C)$, and let 
$m$ be the positive integer such that $\pi_* C=m \ell$.
Notice that $\ell$ is a free curve and hence yields a smooth point in $\Rat(Y)$. Let 
$V_Y$ be the irreducible component of $\Rat(Y)$ which contains $[\ell]$.
We set $y:=\pi(x)$. We also set $V_x':=\{[C]\in V\,|\,x\in C\}\subseteq V$, and similarly 
we define $(V_Y)_y'\subseteq V_Y$ (recall that $V_x$ is the normalization of $V_x'$, and 
 $(V_Y)_y$ that of $(V_Y)_y'$). Notice that $V_x'$ has pure dimension $-K_X\cdot[V]-2$, and since $V_Y$ is a dominating family, 
 $(V_Y)_y'$ has pure dimension $-K_Y\cdot [V_Y]-2$.

Let us consider the morphism $\pi_*\colon V \to \textup{Chow}(Y)$
induced by
the push-forward morphism $\textup{Chow}(X)\to \textup{Chow}(Y)$ (see \cite[Theorem I.6.8]{kollar}).
We claim that
$\pi_*$ is finite on $V_x'$. 
Suppose otherwise, and let 
$T \subseteq V_x'$ be an irreducible complete curve contained in a fiber of $\pi_*$. We set $\Lo(T):=\cup_{t\in T} C_t\subseteq X$. Then $\dim\Lo(T)= 2$, and $\dim\N(\Lo(T))=1$ by \cite[Corollary II.4.21]{kollar}. This implies that $\pi$ is finite on $\Lo(T)$, thus $\dim\pi(\Lo(T))=2$. On the other hand $\ell':=\pi(C_t)$ does not depend on $t \in T$, because $T$ is contained in a fiber of $\pi_*$. Hence $\pi(\Lo(T))=\ell'$, a contradiction.
This proves that $\pi_*$ is finite on $V_x'$.

Therefore, the rational map $V_x' \dashrightarrow (V_Y)_{y}'$ sending
$[C]\in V_x'$ to $\frac{1}{m}\pi_*[C]=[\ell] \in (V_Y)_{y}'$ 
is generically finite, and hence $\dim (V_Y)_{y}'\ge \dim V_x'$. We get:
$$-K_Y\cdot [V_Y] =\dim (V_Y)_{y}'+2\ge \dim V_x' +2=-K_X \cdot [V].$$

Suppose from now on that $K_{X/Y}\cdot [V] \le 0$. Then 
$$-K_X\cdot [V] \ge -mK_Y \cdot [V_Y],$$ and thus
$m=1$ and $-K_X\cdot [V]=-K_Y \cdot [V_Y]$, proving $(1)$.

We proceed to prove $(2)$. Since $-K_Y \cdot [V_Y]=\dim Y +1$,
 by Theorem \ref{CMSBK}
it is enough to show that $V_Y$ is a
locally unsplit family of rational curves on $Y$. 
Let $X_0$ be a dense open subset of $X$ such that for every point $x \in X_0$
\begin{itemize}
\item for any curve $[C]\in V_x$, $C$ is a free curve, 
\item $V_x$ is proper, and 
\item for any curve $[C]\in V_x$, $\pi_{|C}\colon C\to \pi(C)$ is birational.
\end{itemize}
Let $V_0$ be the open subset of $V$ consisting of points $[C]$ such that $C\cap X_0\neq \emptyset$.
Observe that $(\pi_*)_{|V_0}\colon V_0 \to \textup{Chow}(Y)$ factors through $V_Y \to \textup{Chow}(Y)$. We still denote by $(\pi_*)_{|V_0}$ the induced morphism
$V_0 \to V_Y$. 

Since $V_x'$ is proper, to prove that $V_Y$ is a locally unsplit family of rational curves, it
is enough to show that
$(\pi_*)_{|V_x'} \colon V_x' \to (V_Y)_{y}'$ is dominant.

Consider the universal families $U_0\to V_0$ and $U_Y\to V_Y$, and the evaluation morphisms 
$e\colon U_0\to X$
and
$e_Y\colon U_Y\to Y$. Notice that $e$ is flat by \cite[Corollary II.3.5.3 and Theorem II.2.15]{kollar}.
We have a commutative diagram:
$$
\xymatrix{ X\ar[d]_{\pi} & {U_0}\ar[d]^{\mu} \ar[l]_{e}\ar[r]^p&
{V_0}\ar[d]^{{(\pi_*)}_{|V_0}}\\
Y&{U_Y}\ar[r]^{p_Y}\ar[l]_{e_Y}&{V_Y}}
$$
Recall from the proof of (1) that
$\dim (\pi_*)_{|V_0}(V_x')=\dim (V_Y)_y'$. This implies that
$(\pi_*)_{|V_0}$ is dominant, and hence so is $\mu$. 

On the other hand,
since $V_x'=p(e^{-1}(x))$ and $(V_Y)_y'=p_Y(e_Y^{-1}(y))$, we also have 
a commutative diagram:
$$
\xymatrix{ 
e^{-1}(x) \ar[r]\ar[d]_{\mu_{|e^{-1}(x)}} & {V_x'} \ar[d]^{{(\pi_*)}_{|V_x'}}\\
e_Y^{-1}(y)\ar[r]& {(V_Y)_y'}
}
$$
where the horizontal arrows are finite and surjective, and $(\pi_*)_{|V_x'}$ is finite by the proof of (1). Therefore $\mu_{|e^{-1}(x)}\colon e^{-1}(x)\to e_Y^{-1}(y)$ is finite, and 
to show that $(\pi_*)_{|V_x'}\colon V_x' \to (V_Y)_{y}'$ is dominant, it is enough to show that $\mu_{|e^{-1}(x)}\colon e^{-1}(x)\to e_Y^{-1}(y)$ is dominant. 

Consider now $Z\subset U_0$ a general fiber of the composition $\pi\circ e\colon U_0\to Y$; $Z$ has pure dimension $\dim U_0-\dim Y$.
$$\xymatrix{{\pi^{-1}(y)} &  Z\ar[l]_{e_{|Z}}\ar[d]^{\mu_{|Z}}\\& e_Y^{-1}(y)}$$
Let $F$ be an irreducible component of $e_Y^{-1}(y)$, and let $Z_0$ be an irreducible component of $Z$ dominating $F$ under $\mu$. Since $e$ is flat, $Z_0$ must also dominate $\pi^{-1}(y)$ under $e$. Thus for $x\in\pi^{-1}(y)$ general, $e^{-1}(x)\cap Z_0$ is non-empty and has pure dimension 
$\dim Z_0-\dim\pi^{-1}(y)=\dim e^{-1}(x)=\dim e_Y^{-1}(y)=\dim F$. Finally $\mu$ is finite on  $e^{-1}(x)\cap Z_0$, so that $e^{-1}(x)\cap Z_0$ dominates $F$ under $\mu$.

We conclude that $\mu_{|e^{-1}(x)}\colon e^{-1}(x)\to e_Y^{-1}(y)$ is dominant, completing the proof of the lemma.
\end{proof}
\begin{lemma}\label{scb}
Let $X$ and $Y$ be smooth projective varieties, and let 
$\pi \colon X \to Y$ be a $\pr^1$-bundle. Let $V$ be a locally unsplit dominating family of rational curves on $X$ with $-K_X\cdot [V]=\dim(X)\ge 3$. Set $n:=\dim(X)$.
Then $X\cong \pr^1\times \pr^{n-1}$, and $V$ is the family of lines in the $\pr^{n-1}$'s.
\end{lemma}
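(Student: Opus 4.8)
The plan is to exploit the constraint that $-K_X\cdot[V]=n$ is as large as possible for a $\pr^1$-bundle, and to pin down the class $[V]$ in $\N(X)$ by intersecting against the two natural extremal rays. Write $F$ for a fiber of $\pi\colon X\to Y$, so $-K_X\cdot F=2$. Since $\rho_Y=1$ would follow once we know $Y\cong\pr^{n-1}$, the first thing I would do is analyze $\NE(X)$, which is a two-dimensional cone (as $\rho_X=\rho_Y+1=2$ if $\rho_Y=1$; more generally I must first rule out $\rho_Y>1$). The fiber class $[F]$ spans one extremal ray $R_1$, and I claim $[V]$ lies in the other. Indeed, a general curve $C$ of $V$ cannot be a multiple of $[F]$ (the fibers do not dominate $X$ as a family with the right degree — a fiber has anticanonical degree $2<n$), so $C$ maps to a nonconstant, hence dominating, family of rational curves on $Y$ under $\pi$.

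\emph{First}, I would show $\rho_Y=1$. The pushforward $\pi_*C$ is a rational curve moving in a dominating family on $Y$, so $-K_Y\cdot\pi_*C\ge 2$ by freeness. Using the relative canonical formula $K_X=\pi^*K_Y+(\text{relative term})$ and $-K_X\cdot C=n$, I expect to extract $-K_Y\cdot\pi_*C\ge n-1=\dim Y$. \emph{Second}, the key step: since $\pi_*C$ moves in a locally unsplit dominating family on $Y$ of anticanonical degree at least $\dim Y$, I would apply Theorems \ref{CMSBK} and \ref{M} (the Cho--Miyaoka--Shepherd-Barron and Miyaoka characterizations) to conclude $Y$ is either $\pr^{n-1}$ or a quadric, and in particular $\rho_Y=1$. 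To decide between the two and to nail down the bundle, I would examine the projection more carefully: the image family determines $l_Y\in\{n-1,n\}$, and I would argue the degree bookkeeping forces $Y\cong\pr^{n-1}$ with $\pi_*C$ a line.

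\emph{Third}, with $Y\cong\pr^{n-1}$, I must identify the $\pr^1$-bundle. Write $X\cong\pr_{\pr^{n-1}}(\ol\oplus\ol(e))$ for some $e\ge 0$ (after twisting). Computing $-K_X\cdot C$ for $C$ a section over a line, in terms of $e$ and the degree of the relative tautological class along $C$, and imposing $-K_X\cdot C=n$ together with the requirement that $V$ be \emph{locally unsplit and dominating} (so $C$ is free and $\Lo(V_x)$ behaves correctly for general $x$), I expect to force $e=0$. The cleanest route is to use the numerical relations: if $e>0$, the bundle has a negative section, curves meeting it have smaller anticanonical degree, contradicting that every curve of $V$ through a general point has degree exactly $n$ while still being free and dominating. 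This yields $X\cong\pr^1\times\pr^{n-1}$, and then $C$ is forced to be a line $\{pt\}\times\text{line}$, i.e.\ a line in the $\pr^{n-1}$ factor.

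\textbf{The main obstacle} I anticipate is the second step: transferring the locally-unsplit/dominating and degree hypotheses cleanly from $V$ on $X$ to its image family on $Y$, so that Theorems \ref{CMSBK} and \ref{M} genuinely apply. One must check that $\pi_*C$ really does move in a \emph{locally unsplit} dominating family (not merely a dominating one) of the correct minimal degree, and that the degree inequality $-K_Y\cdot\pi_*C\ge\dim Y$ is sharp enough to trigger Miyaoka's theorem rather than a weaker conclusion. The subtle point is controlling the relative-degree contribution $\ol_X(1)\cdot C$ so that it cannot absorb the discrepancy between $-K_X\cdot C=n$ and $-K_Y\cdot\pi_*C$; I would handle this by noting that for general $x$ the curves of $V_x$ are free and their images foliate $Y$, pinning the image degree exactly.
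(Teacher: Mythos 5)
There is a genuine gap at exactly the point you flag as the ``main obstacle,'' and your proposed fix does not close it: all of your tools produce only \emph{lower} bounds on the image degree, while the crux of the Lemma is an \emph{upper} bound. Writing $K_X=\pi^*K_Y+K_{X/Y}$, one has $-K_Y\cdot\pi_*C=n+K_{X/Y}\cdot C$, and the dimension count (the images of the $(n-2)$-dimensional family $V_x$ give, for general $y\in Y$, an $(n-2)$-dimensional family of rational curves through $y$, hence $-K_Y\cdot [V_Y]=\dim (V_Y)_y+2\ge \dim Y+1$) already yields a lower bound stronger than your $\ge n-1$. What is missing is the inequality $-K_Y\cdot\pi_*[V]\le n$, i.e.\ $K_{X/Y}\cdot[V]\le 0$. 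Freeness of the general curve $C$ does not give this: $T_{X/Y}|_C$ is a line subbundle of the nef bundle $T_X|_C$, and line subbundles of nef bundles on $\pr^1$ can have negative degree (e.g.\ $\ol_{\pr^1}(-1)\subset\ol_{\pr^1}^{\oplus 2}$), so $K_{X/Y}\cdot C>0$ is not excluded by ``the curves are free and their images foliate $Y$.'' Without this upper bound you cannot conclude that $\pi|_C$ is birational, nor that the image family $V_Y$ is locally unsplit --- if some curve of $V$ through a general point were a multiple cover of its image, its pushforward would be a non-integral cycle and $(V_Y)_y$ would fail to be proper --- so neither characterization theorem can legitimately be invoked. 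This is precisely where the paper's proof does its real work: it takes the Stein factorization $T$ of the evaluation map of the universal family over a component of $V_x$ (so that $\rho_T=1$ and $T\to Y$ is finite), base changes to the $\pr^1$-bundle $Z=T\times_Y X\cong\pr_T(\mathscr{E})$ with its tautological section $T_Z$, and rules out $K_{X/Y}\cdot[V]>0$ by computing $K_{Z/T}$ on components of preimages of curves of $V$ lying inside and outside $T_Z$, using $\rho_T=1$ to control the sign of $\mathscr{M}$ on pushed-forward curves. Your proposal has no substitute for this step.

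Two further problems. First, your appeal to Theorem \ref{M} to deduce $\rho_Y=1$ is circular: Miyaoka's theorem has $\rho=1$ \emph{and} Fano-ness among its hypotheses, and in this Lemma neither $X$ nor $Y$ is assumed Fano; moreover Theorem \ref{M} is a statement about $l_Y$, the minimum over \emph{all} locally unsplit dominating families, so exhibiting one family of degree $\ge\dim Y$ would not trigger it anyway. The detour is also unnecessary: once the image degree is pinned at $\dim Y+1$ and local unsplitness of $V_Y$ is proved, Theorem \ref{CMSBK} alone gives $Y\cong\pr^{n-1}$; the paper never uses Theorem \ref{M} here and never separately proves $\rho_Y=1$. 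Second, in your third step you write $X\cong\pr_{\pr^{n-1}}(\ol\oplus\ol(e))$: this presumes the underlying rank-$2$ bundle splits, which is false for general $\pr^1$-bundles over $\pr^{n-1}$ (think of $\pr_{\pr^2}(T_{\pr^2})$). The splitting must be derived, not assumed: in the paper it comes from the section $D$ produced by the Stein factorization, which exhibits $X$ as $\pr_{\pr^{n-1}}(\mathscr{E})$ with an extension $0\to\ol_{\pr^{n-1}}\to\mathscr{E}\to\mathscr{M}\to 0$; then $K_{X/Y}\cdot[V]=0$ forces $\mathscr{M}\cong\ol_{\pr^{n-1}}$, and $h^1(\pr^{n-1},\ol_{\pr^{n-1}})=0$ splits the extension, giving $X\cong\pr^1\times\pr^{n-1}$ and identifying $V$ as the family of lines in the $\pr^{n-1}$-factors.
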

\begin{proof}
Let $x$ be a general point in $X$, $V'_x$ an irreducible component of $V_x$, and $U'_x \to V'_x$ the universal
family. Let 
$U'_x \to X$ be the evaluation morphism, and
$U'_x \to T$ its Stein factorization.
Then $T$ is a normal generalized cone; in particular $\rho_T=1$ by 
\cite[Corollary II.4.21]{kollar}.

We claim that the composite map $T \to X \to Y$ is finite; in particular, it is dominant. Suppose otherwise.
Then, by \cite[Corollary II.4.21]{kollar}, $\pi$ sends every curve in
$D:=\Lo(V'_x)$ to a point.
Thus $-K_X\cdot [V]=2$, yielding a contradiction.

Set $Z:=T\times_Y X$, with natural morphisms
$\tau\colon Z\to T$, and $\nu\colon Z\to X$. 
$$\xymatrix{Z\ar[r]^{\nu}\ar[d]_{\tau} & X\ar[d]^{\pi}\\
T\ar[r]&Y
}$$
Notice that $\tau$ is a $\pr^1$-bundle. 
Let $T_Z\subset Z$ be the section of $\tau$ induced by $T\to X$; 
we have
$\nu(T_Z)=D$. 
Then $Z\cong \mathbb{P}_{T}(\mathscr{E})$,
where $\mathscr{E}:=\tau_*\mathscr{O}_Z(T_Z)$ is a rank 2 vector bundle on $T$ that fits into a short exact sequence
$$0\to \mathscr{O}_T\to \mathscr{E}\to \mathscr{M}\to 0,$$
with $\mathscr{M}$ a line bundle on $T$. Moreover,
$\mathscr{E}\twoheadrightarrow \mathscr{M}$ corresponds to the section $T_Z$, and $\mathscr{O}_Z(T_Z)$ identifies with the tautological line bundle, so that $\mathscr{O}_Z(T_Z)_{|T_Z}\cong\tau^*\mathscr{M}_{|T_Z}$.

We prove that $K_{X/Y}\cdot [V]\le 0$.
Suppose otherwise.
We have:
$$\mathscr{O}_Z(K_{Z/T})\cong\mathscr{O}_Z(-2T_Z)\otimes
\tau^*\mathscr{M},$$
and therefore
$$\mathscr{O}_Z(K_{Z/T})_{|T_Z}\cong\mathscr{O}_Z(-T_Z)_{|T_Z}.$$

Let $[C]\in V'_x$, and let 
$C_Z$ be an irreducible component of $\nu^{-1}(C)$ contained in $T_Z$.
Then, by the projection formula:
$$-T_Z\cdot C_Z=K_{Z/T}\cdot C_Z = 
\nu^*(K_{X/Y}) \cdot C_Z = mK_{X/Y}\cdot C>0,$$
where $m\in\mathbb{Z}_{>0}$ is such that $\nu_*C_Z=mC$.
This implies that $\mathscr{M}\cdot \tau_*C_Z<0$.

Let now $[C']\in V$ be a general point, and $C'_Z$ an irreducible component of $\nu^{-1}(C')$ not  contained in $T_Z$. 
Then, as above, we must have $K_{Z/T}\cdot C'_Z>0$. On the other hand, $T_Z\cdot C'_Z\ge 0$ since $C'_Z\not\subset T_Z$, and
$\mathscr{M}\cdot \tau_*C'_Z<0$ because $\rho_T=1$.
This implies that
$$K_{Z/T}\cdot C'_Z=-2T_Z\cdot C'_Z+\mathscr{M}\cdot \tau_*C'_Z<0,$$
yielding a contradiction.
Therefore $K_{X/Y}\cdot [V]\le 0$, and Lemma \ref{inducedfamily}(1) 
yields $K_{X/Y}\cdot [V]=0$.

Let $x \in X$ be a general point, $[C]\in V_x$,
and set $y:=\pi(x)$ and $\ell := \pi(C)$. We show that
$\pi_{|C}:C\to\ell$ is birational.
Let $\mathbb{F}$ be the normalization of $\pi^{-1}(\ell)$, and let $\tilde\ell$ be the normalization of $\ell$. Let $C'$ be the strict transform of $C$ in $\mathbb{F}$. By the projection formula, we have
$K_{\mathbb{F}/\tilde\ell}\cdot C'=K_{X/Y}\cdot C=0$. This implies that
$\mathbb{F}\cong \tilde \ell\times\pr^1$, and that $C'$ is a fiber of $\mathbb{F} \to \pr^1$, proving our claim.

Therefore
$Y\cong \mathbb{P}^{n-1}$ by Lemma \ref{inducedfamily}(2), and $T=D$ is a section of $\pi$.
Since $K_{X/Y}\cdot [V]=0$, we must have $\mathscr{M}\equiv 0$, and hence 
$\mathscr{M}\cong\mathscr{O}_{\mathbb{P}^{n-1}}$. Finally, $\mathscr{E}\cong \mathscr{O}_{\mathbb{P}^{n-1}}^{\oplus 2}$, since 
$h^1(\mathbb{P}^{n-1},\mathscr{O}_{\mathbb{P}^{n-1}})=0$. This completes the proof of the lemma.
\end{proof}
\begin{lemma}\label{contraction}
Let $X$ be a Fano manifold of dimension $n\geq 3$ and Picard number $\rho_X=2$, and suppose that $X$ has a locally unsplit dominating family of rational curves $V$ of anticanonical degree $n$. Let $D\subset X$ be an irreducible component of $\Lo(V_x)$ for a general point $x\in X$.

Then one of the following holds:
\begin{enumerate}[$(a)$]
\item $X$ is the blow-up of $\pr^n$ along a linear subspace of codimension $2$;
\item $X\cong\pr^1\times\pr^{n-1}$;
\item 
$D^{\perp}\cap\NE(X)$ is an extremal ray of $X$, whose associated contraction is divisorial and sends its exceptional divisor to a point; the other extremal contraction of $X$ is either a (singular) conic bundle, or the blow-up of a smooth variety along a smooth subvariety of codimension $2$. 
\end{enumerate}
\end{lemma}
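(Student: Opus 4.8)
The plan is to describe the two elementary contractions of $X$ and match the outcomes to $(a)$, $(b)$, $(c)$. First, by Remark \ref{locuns}, $D$ is a prime divisor with $\N(D,X)=\R[V]$; in particular $\dim\N(D,X)=1<\rho_X$, so $D$ is not ample. I would next observe that $D$ is nef: since $X$ is Fano, each of the two extremal rays of $\NE(X)$ is generated by an irreducible rational curve $\gamma$, and either $\gamma\not\subseteq D$, so that $D\cdot\gamma\geq 0$, or $\gamma\subseteq D$, so that $[\gamma]\in\R[V]$ and $D\cdot[V]\geq 0$ because a general member of the dominating family $V$ is not contained in $D$. Hence $D$ is nef and non-ample, and $S:=D^{\perp}\cap\NE(X)$ is an extremal ray. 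Let $R$ be the other ray, so $D\cdot R>0$; by Lemma \ref{uno} its contraction $\sigma\colon X\to Y$ is either the blow-up of a smooth codimension-$2$ center in a smooth Fano variety $Y$ with $\rho_Y=1$, or a conic bundle over such a $Y$. By Proposition \ref{divisorial}, the contraction $\psi$ of $S$ is one of: $(i)$ a fibration $X\to\pr^1$ with $D$ a fiber; $(ii)$ a divisorial contraction sending its exceptional divisor to a point; $(iii)$ a small contraction.

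I would first exclude case $(iii)$. There the flip $X\dashrightarrow X'$ produces a smooth variety $X'$ carrying a $\pr^1$-bundle $\psi'\colon X'\to Y'$; as the flip is an isomorphism in codimension one, the transform of $V$ is again a locally unsplit dominating family with anticanonical degree $n=\dim X'\geq 3$, so Lemma \ref{scb} gives $X'\cong\pr^1\times\pr^{n-1}$. This contradicts the existence of the $K_{X'}$-positive ray $S'$ produced by the flip, since $\pr^1\times\pr^{n-1}$ is Fano. With $(iii)$ excluded, case $(ii)$ gives exactly situation $(c)$: $\psi$ is the contraction of $D^{\perp}\cap\NE(X)$, it is divisorial and contracts its exceptional divisor to a point, while $\sigma$ is a blow-up or a conic bundle by Lemma \ref{uno}; and $\sigma$ cannot be a smooth conic bundle, for otherwise Lemma \ref{scb} would give $X\cong\pr^1\times\pr^{n-1}$, which has no divisorial contraction to a point.

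It remains to treat case $(i)$, which should yield $(a)$ or $(b)$. Here curves of class $[V]$ are contracted by $\psi$, so $[V]\in S$ and $D\cdot[V]=0$; replacing $D$ by a general (hence smooth) fiber, adjunction gives $-K_D\cdot[V]=-K_X\cdot[V]-D\cdot[V]=n=\dim D+1$, so $V$ restricts to a locally unsplit dominating family on $D$ of maximal anticanonical degree. By Theorem \ref{CMSBK}, $D\cong\pr^{n-1}$ and $V$ consists of the lines in the fibers of $\psi$. If $\sigma$ is a blow-up with exceptional divisor $E$, then $\sigma_{*}V$ is a locally unsplit dominating family on $Y$, whence $n+E\cdot C=-K_Y\cdot\sigma_{*}[V]\leq n+1$ and $E\cdot C\in\{0,1\}$ for $C\in V$; the value $0$ is impossible, since it would force $E$ to be disjoint from the general fiber of $\psi$, hence $E\cong\pr^{n-1}$, contradicting that $E$ is a $\pr^1$-bundle over the $(n-2)$-dimensional center. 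Thus $E\cdot C=1$, so $-K_Y\cdot\sigma_{*}[V]=n+1$ and $Y\cong\pr^n$; the fibers of $\psi$ then map isomorphically (a $\pr^{n-1}$ of Picard number one admits no nontrivial birational morphism) onto a pencil of hyperplanes, whose base locus is the linear center, giving $(a)$. If instead $\sigma$ is a conic bundle, then $\sigma|_D$ is finite onto $Y$ with $\dim Y=n-1$, so $Y\cong\pr^{n-1}$ (a finite surjection from $\pr^{n-1}$ onto a smooth variety of the same dimension forces that variety to be $\pr^{n-1}$), and the goal is to show that $\sigma$ is a $\pr^1$-bundle, after which Lemma \ref{scb} gives $X\cong\pr^1\times\pr^{n-1}$, i.e. $(b)$.

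The main obstacle is precisely this last point: ruling out singular fibers of the conic bundle $\sigma$ in the presence of the $\pr^{n-1}$-fibration $\psi$, equivalently showing that $\sigma|_D\colon D\cong\pr^{n-1}\to Y$ has degree one so that $(\psi,\sigma)\colon X\to\pr^1\times\pr^{n-1}$ is an isomorphism. A singular fiber would produce a line class $r'$ with $-K_X\cdot r'=1$ and a doubled fiber class, forcing $D\cdot r'=\tfrac12\deg(\sigma|_D)\geq 2$; excluding this seems to require a finer study of the relative Picard group of $\sigma$ together with the ramification of $\sigma|_D$, rather than the numerical estimates used for the other cases. Every other step reduces, through Lemmas \ref{uno} and \ref{scb} and Proposition \ref{divisorial}, to the characterization of projective space in Theorem \ref{CMSBK}.
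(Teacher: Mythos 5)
Your architecture (nefness of $D$, the ray $S=D^{\perp}\cap\NE(X)$, Lemma \ref{uno} for the ray $R$, the trichotomy of Proposition \ref{divisorial}, Lemma \ref{scb} against the flip case) is the same as the paper's, and your treatment of case $(ii)$ and of the blow-up sub-case of $(i)$ is essentially sound. But there is a genuine gap, which you yourself flag: in case $(i)$, when the second contraction $\sigma$ is a conic bundle, you cannot show $X\cong\pr^1\times\pr^{n-1}$, i.e.\ you cannot exclude singular fibers of $\sigma$. The missing idea is that the paper never analyzes $\sigma$ in case $(i)$ at all; it works only with the fibration $\ph\colon X\to\pr^1$. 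After identifying the general fiber $F$ with $\pr^{n-1}$ via Theorem \ref{CMSBK} (as you do), it invokes Tsen's theorem to produce a divisor $H_0$ on the locus where $\ph$ is smooth with $\ol_F({H_0}_{|F})\cong\ol_{\pr^{n-1}}(1)$, takes the closure $H$, notes that $[\ph^*(p)]\cdot H^{n-1}=1$ forces \emph{every} fiber to be integral, and then applies \cite[Corollary 5.4]{fujita75} to get $X\cong\pr_{\pr^1}(\mathscr{E})$; the Fano condition then leaves exactly the possibilities $(a)$ and $(b)$. In other words, the resolution of your ``main obstacle'' is a relative hyperplane class over $\pr^1$, not a finer study of the conic bundle $\sigma$ --- your route through $\sigma$ genuinely does not close.

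A second, smaller gap is in your exclusion of case $(iii)$: you assert that ``as the flip is an isomorphism in codimension one, the transform of $V$ is again a locally unsplit dominating family.'' Dominating and of degree $n$, yes (the general curve of $V$ is free, hence avoids the codimension-$\geq 2$ set $\Exc(\ph)$); but local unsplitness is not formal: curves of $V'_{x'}$ could a priori meet the flipped locus, and their limits in $\Chow(X')$ need not correspond to limits inside the proper family $V_x$. The paper proves properness by showing $\Lo(V_x)\cap\Exc(\ph)=\emptyset$ for general $x$: otherwise, since every curve in $\Lo(V_x)$ has class in $\R_{\geq 0}[V]$ and $D\cdot S=0$, one gets $[V]\in S$, hence $\Lo(S)=X$, contradicting that the contraction of $S$ is small; only then is $V'_x\cong V_x$ proper and Lemma \ref{scb} applicable. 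You need this (or an equivalent) argument. Finally, a repairable slip: in the blow-up sub-case of $(i)$, your claim that $E\cdot [V]=0$ would force $E\cong\pr^{n-1}$ presupposes that special fibers of the $\pr^1$-fibration are $\pr^{n-1}$, which is not yet known at that point; the contradiction is better obtained by noting that $E\cdot[V]=D\cdot[V]=0$ gives $E\equiv\lambda D$ with $\lambda<0$ (pair with a $\sigma$-fiber), which is impossible for nonzero effective divisors. Unlike the conic bundle issue, this one is cosmetic.
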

\begin{remark}
The only examples known to the authors of Fano manifolds with dimension  $n\geq 3$, Picard number $\rho_X=2$, and having a locally unsplit dominating family of rational curves of anticanonical degree $n$, are $\pr^1\times\pr^{n-1}$ and the varieties from  Example \ref{exampleM}. These are obtained as the blow-up of $\pr^n$ along a smooth subvariety $A$, of dimension $n-2$ and degree   $d\in\{1,\dotsc,n\}$, contained in a hyperplane. If $d=1$, this is case $(a)$ of Lemma \ref{contraction}. If $d>1$, this gives an example of case $(c)$ of Lemma \ref{contraction}.  
\end{remark}
\begin{proof}[Proof of Lemma \ref{contraction}]
By Remark \ref{locuns}, $\N(D,X)=\R[V]$. This implies that $D$ is nef since $D\cdot [V]\geq 0$. Let $R$ be an extremal ray of $X$ such that $D\cdot R>0$, and let $\sigma\colon X\to Y$ be the contraction of $R$.
By Lemma \ref{uno}, $Y$ is a smooth Fano variety, and either $\sigma$
is a blow-up with center $A_Y\subset Y$ smooth of codimension 2, or $\sigma$ is a conic bundle.
Set $S:=D^{\perp}\cap\NE(X)$. Then $S$ is an extremal ray of $X$ by Proposition \ref{divisorial}. 

Suppose that we are in case $(i)$ of Proposition \ref{divisorial}. 
We show that we are in case $(a)$ or $(b)$.
The contraction of $S$ is $\ph\colon X\to\pr^1$, $D$ is a fiber, and hence 
$D\cdot [V]=0$. Let 
$F$ be a general fiber of $\ph$.
Then the family of rational curves from $V$ contained in $F$ is locally unsplit with anticanonical degree $\dim(F)+1$, thus $F\simeq\mathbb{P}^{n-1}$ by Theorem \ref{CMSBK}. 

Let $B_0\subset \pr^1$ be a dense open subset such that
$X_0:=\ph^{-1}(B_0) \to B_0$ is smooth. 
By Tsen's Theorem, 
there exists a divisor $H_0$ on $X_0$ such that $\mathscr{O}_F({H_0}_{|F})\cong \mathscr{O}_{\mathbb{P}^{n-1}}(1)$. 

Let $H$ be the closure of $H_0$ in $X$. Then $H$ is $\ph$-ample since $\ph$ is elementary.
If $p\in\pr^1$, then 
$[\ph^*(p)]\cdot H^{n-1}=1$. Therefore, 
all fibers of $\ph$ are integral, and
by \cite[Corollary 5.4]{fujita75} we have
 $X\cong \mathbb{P}_{\mathbb{P}^1}(\mathscr{E})$ with
$\mathscr{E}$ a vector bundle of rank $n$ on $\pr^1$.
Since $X$ is Fano, it is not difficult to see that
either $X\cong \pr^1\times\pr^{n-1}$, or  $X$ is the blow-up of $\pr^n$ along a linear subspace of codimension $2$. Thus we get $(a)$ or $(b)$.

Case $(ii)$ of  Proposition \ref{divisorial} is $(c)$. 

We show that case $(iii)$ of Proposition \ref{divisorial} does not occur.
Suppose otherwise. Then the contraction $\ph$ of $S$ is small, it has a flip $X\dasharrow X'$, $X'$ is smooth, and there is a smooth $\pr^1$-bundle $\psi\colon X'\to Y'$. 
Let $[C]\in V$ be a general point. Then 
$\textup{Exc}(\ph)\cap C=\emptyset$ (see \cite[Proposition II.3.7]{kollar}). Let $V'$ be the 
irreducible component of $\Rat(X')$ which contains $C'$
the strict transform of $C$ in $X'$. 

We show that 
$V'$ is a locally unsplit dominating family of rational curves on $X'$. Let $x\in X\setminus \textup{Exc}(\ph)$ be a general point. If $\Lo(V_x)\cap \textup{Exc}(\ph)\neq \emptyset$, then 
$[V]\in S$ since $\N(\Lo(V_x),X)=\R[V]$ and $D\cdot S=0$, and hence $\Lo(S)=X$, yielding a contradiction.
Therefore, $\Lo(V_x)\cap \textup{Exc}(\ph)= \emptyset$, and hence
$V'_x\cong V_x$ is proper. This proves that $V'$ is a locally unsplit family of rational curves. Notice also that
$-K_{X'}\cdot [V']=n$. By Lemma \ref{scb}, $X'\simeq \pr^1\times \pr^{n-1}$, so that $X'$ does not have small contractions, a contradiction.
This completes the proof of the lemma.
\end{proof}
Finally, we prove the main result of this section.
\begin{proposition}\label{prop_main}
Let $X$ be a Fano manifold of dimension $n\geq 3$, and suppose that $X$ has a locally unsplit dominating family $V$ of rational curves of anticanonical degree $n$. Then $\rho_X\leq 3$. 

If moreover $\rho_X=3$, then $X$ is isomorphic to one of the varieties described in Example \ref{ex}, and $[V]\equiv 
C_{\wi{G}}+(d-a)F$, where $F$ is a fiber of $\sigma$, and $C_{\wi{G}}$ is the  strict transform of a line in $\wi{G}_Y\cong\pr^{n-1}$ (notations as in Example \ref{ex}).
\end{proposition}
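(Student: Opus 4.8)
The plan is to reduce Proposition \ref{prop_main} to the structural results already established in Section \ref{D}. The key observation is that, by Remark \ref{locuns}, if $x\in X$ is a general point then every irreducible component $D$ of $\Lo(V_x)$ is a prime divisor with $\N(D,X)=\R[V]$, so in particular $\dim\N(D,X)=1$. This means $X$ satisfies the hypotheses of Lemma \ref{uno}, which immediately yields $\rho_X\le 3$ (the only cases that arise are $(i)$, $(ii)$, $(iii)$, all with $\rho_X\le 3$). This disposes of the first assertion. The bulk of the work is then the case $\rho_X=3$.

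\textbf{The case $\rho_X=3$.} Here Lemma \ref{uno} places us in case $(iii)$: there is a conic bundle $\ph\colon X\to Z$ with $\rho_Z=1$, and the divisor $D$ produces, via Theorem \ref{main2}, an isomorphism of $X$ with one of the varieties of Example \ref{ex2}. The first task is to identify the base $Z$ and show it is $\pr^{n-1}$, which pins us down to Example \ref{ex} rather than the more general Example \ref{ex2}. I would argue as follows. Since $\ph$ is a conic bundle with $\rho_Z=1$ and the family $V$ dominates $X$, a general curve $C$ of $V$ cannot be contracted by $\ph$ (otherwise $-K_X\cdot[V]=2<n$). Hence $\ph$ maps a general curve $C$ of the family to a rational curve $\ell\subset Z$, and the geometry of the conic bundle, together with $-K_X\cdot[V]=n$ and the intersection table \ref{table}, forces $Z$ to carry a locally unsplit dominating family of rational curves of anticanonical degree $\dim Z+1=n$; by Theorem \ref{CMSBK} this gives $Z\cong\pr^{n-1}$. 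This is essentially the mechanism used in Lemma \ref{scb}, and I expect to reuse that computation.

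\textbf{Pinning down the class $[V]$.} Once $Z=\pr^{n-1}$, the variety $X$ is exactly one of those in Example \ref{ex}, and $\N(X)$ is spanned by the classes appearing in Lemma \ref{NE(X)}. It remains to compute $[V]$ in this basis. The idea is to use the numerical constraints: $V$ is a dominating family, so $[V]$ is not contained in any of the exceptional loci, and $-K_X\cdot[V]=n$. Writing $[V]=\alpha[C_{\wi G}]+\beta[F]+\gamma[\wi F]$ (or in terms of the chosen generators of $\NE(X)$), I would read off from the intersection table \ref{table} the equations $E\cdot[V]$, $\wi E\cdot[V]$, $G\cdot[V]$, $\wi G\cdot[V]$, and $-K_X\cdot[V]=n$. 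Using $-K_X\cdot[V]=i_Z\delta=n$ with $Z=\pr^{n-1}$ (so $i_Z=n$, $\delta=1$) together with the effectivity/positivity constraints coming from $V$ being a genuine dominating family (in particular $D\cdot[V]\ge0$ with $D$ the divisor $\wi G$, and non-negativity against the nef generators identified in the proof of Lemma \ref{NE(X)}) should uniquely force $[V]\equiv C_{\wi G}+(d-a)F$.

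\textbf{Main obstacle.} The delicate point is not the numerical bookkeeping but verifying that the divisor $D=\Lo(V_x)$ corresponds to $\wi G$ rather than $G$, i.e.\ correctly matching the geometric family $V$ to the intended transform of lines meeting $A$. The two sections $G,\wi G$ of the $\pr^1$-bundle play asymmetric roles (cf.\ Remark \ref{symm}), and one must use the specific anticanonical degree $n$ together with the locally-unsplit, dominating nature of $V$ to break the symmetry and select the correct ray. I expect this matching step — confirming that $V$ sweeps out the divisor $\wi G$ and hence that $[V]\equiv C_{\wi G}+(d-a)F$ with the stated coefficient $d-a$, and not the symmetric alternative — to require the most care, and it is where the constraint $-K_X\cdot[V]=n$ (as opposed to a smaller degree) does the essential work.
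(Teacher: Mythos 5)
Your reduction of the first assertion to Remark \ref{locuns} and Lemma \ref{uno}, and of the case $\rho_X=3$ to Theorem \ref{main2} and Example \ref{ex2}, is exactly the paper's route. The gap is in the middle step, precisely where you assert that the conic-bundle geometry ``forces'' $Z$ to carry a locally unsplit dominating family of anticanonical degree $\dim Z+1$, deferring to ``the mechanism of Lemma \ref{scb}''. Two problems. First, the numerics you propose cannot close the argument on their own: writing $[V]\equiv\alpha F+\beta\wi{F}+\gamma C_G$, the relations $G\cdot[V]=0$, $-K_X\cdot[V]=n$ and the positivity constraints from table \ref{table} only give $[V]\equiv(n-m\gamma)F+a\gamma\delta\wi{F}+\gamma C_G$ with $0<\gamma\leq n/m$, where $m=i_Z\delta$; note that your computation already plugs in $i_Z=n$ and $\delta=1$, i.e.\ it presupposes $Z\cong\pr^{n-1}$, which is exactly what must be proved (otherwise $X$ is only known to be as in Example \ref{ex2}, not Example \ref{ex}). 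Second, and more seriously, to get $Z\cong\pr^{n-1}$ from Theorem \ref{CMSBK} one needs the induced family $V_Z$ of images $\ph(C)$ to be \emph{locally unsplit}, not just dominating of the right degree. The dimension bound of Koll\'ar (which is also the first half of Lemma \ref{scb}) does give $-K_Z\cdot[V_Z]\geq\dim Z+1$, hence $\gamma m=n$ and generic birationality of $C\to\ph(C)$; but since $V_x$ is proper and $\dim (V_Z)_z=\dim V_x$, the only way $(V_Z)_z$ can fail to be proper is that some special curve $[C_\infty]\in V_x$ maps onto $\ph(C_\infty)$ with degree $k\geq 2$ (its image cycle $k\,\ph(C_\infty)$ being non-integral). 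Excluding such multiple covers is the heart of the paper's proof, and nothing in your proposal addresses it.

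The appeal to Lemma \ref{scb} cannot fill this hole: there $\pi$ is a smooth $\pr^1$-bundle, so the preimage of (the normalization of) any image curve is a Hirzebruch surface and the projection formula $K_{X/Y}\cdot[V]=0$ kills multiple covers by pure numerics; here $\ph$ is a conic bundle, singular over $A$, and the curves of $V$ meet the exceptional divisors, so that computation does not transfer. What the paper actually does is a dedicated construction: assuming a degree-$k\geq2$ cover exists, it forms $\mathbb{F}=\ell\times_Z Y$ over the normalization $\ell$ of $\ph(C_\infty)$, uses $G\cdot[V]=\wi{G}\cdot[V]=0$ to show the transform of $\sigma(C_\infty)$ is disjoint from the minimal section and to compute $E\cdot C_\infty=ka_0$, then blows up $\mathbb{F}$ along $\nu^{-1}(A_Y)$ to obtain a surface $\w{\mathbb{F}}$ with Du Val singularities mapping to $X$, in which the transform of $C_\infty$ has anticanonical degree exactly $2k$; Koll\'ar's estimates then yield a family of deformations through a fixed point of dimension $\geq 2k-2\geq 2$, and either behaviour of a curve $B$ in that family (proper or not) contradicts $\rho_{\w{\mathbb{F}}}\geq 2$ or the properness of $V_x$. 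Only after this does one get $Z\cong\pr^{n-1}$, hence $m=n$, $\delta=1$, $\gamma=1$, and $[V]\equiv C_G+a\wi{F}\equiv C_{\wi{G}}+(d-a)F$. Finally, the obstacle you single out --- matching the swept divisor with $\wi{G}$ rather than $G$ --- is not where the difficulty lies: it is settled by the sign comparison $G\cdot[V]\geq0$ versus $G\cdot C_G=-a\delta\leq0$ and by $\wi{E}\cdot[V]\geq0$, and in view of Remark \ref{symm} the $a\leftrightarrow d-a$ ambiguity is harmless in any case.
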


\noindent\emph{Proof.} Let $D\subset X$ be an irreducible component of $\Lo(V_x)$ for a general point $x\in X$. Then $\N(D,X)=\R[V]$ by Remark \ref{locuns}, and by Lemma \ref{uno}  we have $\rho_X\leq 3$. 
Notice that $D$ is nef, because  $D\cdot [V]\geq 0$. 
\begin{pargtwo}
We assume that $\rho_X=3$. By Theorem \ref{main2}, 
$X$ is as described in Example \ref{ex2}. We use the notations of Example \ref{ex2} and Theorem \ref{main2}; in particular we refer the reader to Table \ref{table} for the intersection table of $X$.
We have to show that $Z\cong\pr^{n-1}$, that $a\leq d$, and that $[V]\equiv C_G+a\wi{F}\equiv C_{\wi{G}}+(d-a)F$ (see \eqref{equiv}).
\end{pargtwo}
\begin{pargtwo}\label{secondpart}
Notice that $G\cdot [V]=0$. Indeed if for instance $G\cap D\neq \emptyset$, as $\rho_G=1$, then there exists $\lambda\in\Q_{>0}$ such that $[V]=\lambda [C_G]$. On the other hand  $G\cdot [V]\geq 0$ while  $G\cdot C_G=-a\delta\leq 0$, and we conclude that $G\cdot [V]=0$.

Set $m:=-K_Z\cdot C_Z=i_Z\delta>0$. 
Since $[F],[\wi{F}],[C_G]$ are a basis of $\N(X)$, and we can write:
$$[V]\equiv\alpha F+\beta\wi{F}+\gamma C_G,$$
with $\alpha,\beta,\gamma\in\Q$. 
Intersecting with $G$ yields $\beta=a\gamma\delta$, and intersecting with $-K_X$ yields $\alpha=n-m\gamma$:
\renewcommand{\theequation}{\thepargtwo}
\stepcounter{pargtwo}\begin{equation}\label{[V]}
[V]\equiv(n-m\gamma) F+a\gamma\delta\wi{F}+\gamma C_G.\end{equation}
Notice that $\gamma>0$, because $[F]$ belongs to the extremal ray $R$, and $[V]\not\in R$. 

Moreover $\wi{E}\cdot [V]=\gamma\delta(d-a)$, so we obtain $d-a\geq 0$.

Finally, we also have $\wi{G}\cdot [V]=n-m\gamma\geq 0$.
\end{pargtwo}
\begin{pargtwo}\label{base}
By \eqref{[V]} we have
$$
K_{X/Z}\cdot[V]=K_X\cdot[V]-K_Z\cdot\ph_*[V]=-n-K_Z\cdot \gamma C_Z=-n+\gamma m\leq 0.
$$
Then Lemma \ref{inducedfamily}(1) 
yields $K_{X/Z}\cdot[V]=0$ and hence $\gamma=n/m$, and
 \eqref{[V]} becomes:
\renewcommand{\theequation}{\thepargtwo}
\stepcounter{pargtwo}\begin{equation}\label{V}
[V]\equiv \frac{n}{m}\left(a\delta\wi{F}+C_G\right).\end{equation}
\end{pargtwo}
\begin{pargtwo}
Let $x\in X$ be a general point.
We show that $\ph$ has degree $1$ on every curve of $V_x$.

We proceed by
contradiction, and assume that
for some $[C_\infty] \in V_x$, the 
morphism $C_\infty \to \ph(C_\infty)$ has degree $k\ge 2$.

Let $\ell\to\ph(C_\infty)$ be the normalization, 
set $\mathbb{F}:=\ell\times_Z Y$, and denote by 
${\pi}_{|\mathbb{F}}\colon \mathbb{F}\to\ell$ 
(respectively, $\nu\colon \mathbb{F}\to Y$) 
the natural morphisms.
$$\xymatrix{{\mathbb{F}}\ar[r]^{\nu}\ar[d]_{\pi_{|\mathbb{F}}} & 
{Y}\ar[d]_{\pi} &X\ar[l]_{\sigma}\ar[dl]^{\ph}\\
{\ell}\ar[r]&{Z}&
}$$
Let $C_\infty'\subset \mathbb{F}$ be the  strict transform of $\sigma(C_\infty)\subset Y$.
Let moreover $C_0\subset \mathbb{F}$ be a minimal section of 
${\pi}_{|\mathbb{F}}$, and $f\subset \mathbb{F}$ a general fiber.

We have $G_Y\cdot\sigma(C_\infty)=\sigma^*(G_Y)\cdot C_\infty=G\cdot[V]=0$ by \ref{secondpart}, and $\sigma(C_\infty)\not\subseteq G_Y$ because $x\in C_\infty$ is general, hence  $G_Y\cap\sigma(C_\infty)=\emptyset$.

The pull-back of $G_Y$ to $\mathbb{F}$ is precisely $C_0$, so that $C_\infty'\cap C_0=\emptyset$. Moreover ${\pi}_{|\mathbb{F}}$ has degree $k$ on $C_\infty'$.

Set $a_0:=-C_0^2\in\mathbb{Z}_{\geq 0}$. Then 
 $\mathbb{F}\cong \mathbb{F}_{a_0}$, and we obtain: 
\renewcommand{\theequation}{\thepargtwo}
\stepcounter{pargtwo}\begin{equation}\label{linearsystem}
C_\infty'\equiv k\left(C_0+a_0f\right)\quad\text{and}\quad
-K_{\mathbb{F}}\cdot C_\infty'=k(a_0+2).\end{equation}

Consider now $\nu^*\wi{G}_Y\subset\mathbb{F}$. This is a section of $\pi_{|\mathbb{F}}$ disjoint from $C_0$, so that  $\nu^*\wi{G}_Y\equiv C_{0}+ a_0f$.
We also have $\sigma^*\wi{G}_Y=\wi{G}+E$, and $\wi{G}\cdot [V]=0$ by \eqref{V}, so using the projection formula:
\renewcommand{\theequation}{\thepargtwo}
\stepcounter{pargtwo}\begin{equation}\label{E}
E\cdot C_\infty=\sigma^*\wi{G}_Y\cdot C_\infty=\wi{G}_Y\cdot\sigma(C_\infty)=\nu^*\wi{G}_Y\cdot C_\infty'=ka_0.
\end{equation}

Since $A_Y\subset \wi{G}_Y$, we have $\nu^{-1}(A_Y)\subset \nu^*\wi{G}_Y$. Moreover $C_\infty\not\subseteq E\cup\wi{E}$ because $x\in C_\infty$, hence $\ph(C_\infty)\not\subseteq A$, and $\nu^{-1}(A_Y)$ is a zero-dimensional scheme.

We see $\nu^{-1}(A_Y)$ as a zero-dimensional subscheme of $\nu^*\wi{G}_Y\cong\pr^1$; in particular, it is determined by its support and by its multiplicity at each point. We write $\nu^{-1}(A_Y)=h_1 p_1+\cdots +h_r p_r$.
\end{pargtwo}
\begin{pargtwo}
Let $\varpi\colon\widetilde{\mathbb{F}}\to \mathbb{F}$ be the blow up of $\mathbb{F}$ along $\nu^{-1}(A_Y)$; 
note that there is a natural morphism 
$\w{\nu}\colon\widetilde{\mathbb{F}}\to X$:
$$\xymatrix{{\w{\mathbb{F}}}\ar[r]^{\w{\nu}}\ar[d]_{\varpi} &  X\ar[d]_{\sigma}\ar@/^1pc/[dd]^{\ph} \\
{\mathbb{F}}\ar[r]^{\nu}\ar[d]_{\pi_{|\mathbb{F}}} & 
{Y}\ar[d]_{\pi}\\
{\ell}\ar[r]&{Z}
}$$
Observe that 
$\widetilde{\mathbb{F}}$ is a normal surface, with local complete intersection singularities.
Indeed, locally over $p_i$,  $\widetilde{\mathbb{F}}$ can be described as the blow-up of $\mathbb{A}^2_{x,y}$ at the ideal $(x^{h_i},y)$. In particular, 
$\widetilde{\mathbb{F}}$ is smooth at $\varpi^{-1}(p_i)$ if $h_i=1$, otherwise it has a Du Val singularity  of type $A_{h_i-1}$. 

Set $F_i:=\varpi^{-1}(p_i)$ (with reduced scheme structure); then
\renewcommand{\theequation}{\thepargtwo}
\stepcounter{pargtwo}\begin{equation}\label{canonical}
K_{\widetilde{\mathbb{F}}}=\varpi^*K_{\mathbb{F}} +\sum_{1\le i\le r}h_i F_i,
\quad\text{and}\quad \sum_{1\le i\le r}h_i F_i=\w{\nu}^*E.
\end{equation}
Let $\w{C}'_\infty\subset\w{\mathbb{F}}$ be the  strict transform of $C'_\infty\subset\mathbb{F}$. Then $\w{\nu}(\w{C}'_\infty)=C_\infty$, and 
using \eqref{canonical}, \eqref{linearsystem}, and \eqref{E},
we get:
\renewcommand{\theequation}{\thepargtwo}
\stepcounter{pargtwo}\begin{equation}\label{antdegree}
-K_{\widetilde{\mathbb{F}}}\cdot \widetilde{C}'_\infty=
-K_{\mathbb{F}}\cdot C'_\infty
-E\cdot C_\infty
=2k+ka_0-ka_0=2k.
\end{equation}
\end{pargtwo}
\begin{pargtwo}
The curve $\w{C}'_\infty$ is an integral rational curve in $\w{\mathbb{F}}$, of anticanonical degree $2k$ by \eqref{antdegree}.
We fix a point $p_0\in \w{C}_\infty'$ such that $\w{\nu}(p_0)=x$.
By \cite[Theorems II.1.7 and II.2.16]{kollar},
every irreducible component of $\Rat(\w{\mathbb{F}},p_0)$ containing $[\w{C}'_\infty]$ has dimension 
$\ge 2k-2\ge 2$, because $k\ge 2$ by assumption.
 
Let $B\subseteq \Rat(\w{\mathbb{F}},p_0)$ be an irreducible curve containing $[\widetilde{C}'_\infty]$. 
If $B$ is proper, we get $\Lo(B)=\w{\mathbb{F}}$ and hence $\rho_{\w{\mathbb{F}}}=1$ by \cite[Corollary IV.4.21]{kollar}, a contradiction.
If instead $B$ is not proper, the closure of its image in $\Chow(\w{\mathbb{F}})$ contains points corresponding to non-integral curves. 
But this gives again a contradiction, because $V_x$ is proper.
 
We conclude that $Z\cong\pr^{n-1}$ by Lemma \ref{inducedfamily}(2), $C_Z\subset Z$ is a line, $\delta=\ol_Z(1)\cdot C_Z=1$, $m=-K_Z\cdot C_Z=n$, and $\gamma=1$. 
Finally $[V]\equiv a\wi{F}+C_G$ by \eqref{V}.
\qed
\end{pargtwo}
\section{Examples of  locally unsplit families of rational curves of anticanonical degree $n$}\label{families}
Let $X$ be one of the varieties introduced in Example \ref{ex}; we use the same notations as in Examples \ref{ex} and \ref{ex2}, with $Z=\pr^{n-1}$. 

We   construct a dominating family of rational curves on $X$, and then show that it is locally unsplit. Notice that the condition $a\leq d$  is necessary to ensure the existence of such a family, see \ref{secondpart}.

We first consider the case $a=0$, so that $Y=\pr^{n-1}\times\pr^1$, and $X$ is the blow-up of $\pr^{n-1}\times\pr^{1}$ along $A\times\{p_0\}$, where $p_0\in\pr^1$ is a fixed point.  The general curve of the family $V$ is the  strict transform in $X$ of $\ell\times\{p\}\subset{Y}$, where $p\neq p_0$ and $\ell\subset\pr^{n-1}$ is a line. 
Therefore $V$ is a locally unsplit dominating family of rational curves, and for a general point $x\in X$, $V_x$ is isomorphic to the variety of lines through a fixed point in $\pr^{n-1}$; hence $V_x\cong\pr^{n-2}$. 

Suppose from now on that $a>0$, and
set $$X_0:=X\smallsetminus\left(E\cup\wi{E}\cup G\cup \wi{G}\right).$$
Let $\ell \subset \pr^{n-1}$ be a line not contained in $A$, and let $x\in X_0$ be such that $\ph(x)\in\ell$.
Set
$\mathbb{F}:=\pi^{-1}(\ell)\cong \mathbb{P}_{\mathbb{P}^1}(\mathscr{O}_{\mathbb{P}^1}\oplus\mathscr{O}_{\mathbb{P}^1}(a))$, 
and denote by
$\pi_{|\mathbb{F}}$ the restriction of $\pi$ to $\mathbb{F}$. 
We have $G_Y\cap \mathbb{F}=C_0$ the minimal section of $\pi_{|\mathbb{F}}$, and $\wi{G}_Y\cap \mathbb{F}\cong\pr^1$ is another section with
 $\wi{G}_Y\cap \mathbb{F}\equiv C_0+a f$,
where $f$ is a fiber of $\pi_{|\mathbb{F}}$ (see Figure \ref{fig2}). 

Since $A\subset\pr^{n-1}$ is a hypersurface of degree $d$, and
$\ell\not\subset A$,  $A\cap \ell$  is a zero-dimensional scheme of length $d$. Moreover 
$A_Y\cap \mathbb{F}$ is isomorphic to $A\cap \ell$, because $\wi{G}_Y$ is a section of $\pi$.
 In particular $A_Y\cap \mathbb{F}$ can be seen as a closed subscheme of $\ell\cong\pr^1$, hence it is determined by its support and by its multiplicity at each point.

Recall that $1\le a\le d$ by assumption. Let us consider a closed subscheme $W$ of $A_Y\cap \mathbb{F}$, of length $a$. Again, $W$ is determined by its support and by its multiplicity at each point, so that without loss of generality we can write $W=\{p_1,\ldots,p_a\}$, where $p_i$ are
 possibly equal points in $A_Y \cap \mathbb{F}$ (and each $p_i$ appears at most $h_i$ times, if $h_i$ is the multiplicity of $A_Y\cap \mathbb{F}$ at $p_i$).
\begin{construction}\label{construction}
We associate to $(\ell,W,x)$ a smooth rational curve $C\subset X$, of anticanonical degree $n$, and containing $x$.
\end{construction}
Set $y:=\sigma(x)\in \mathbb{F}\smallsetminus (\pi^{-1}(A)\cup G_Y \cup \wi{G}_Y)$. 
We write $\mathscr{I}_{W}$ (respectively, 
$\mathscr{I}_{y}$) for the ideal sheaf of $W$ (respectively, $y$) in $\mathbb{F}$. 
We claim that 
$$h^0\left(\mathbb{F},\mathscr{O}_{\mathbb{F}}(C_0+a f)\otimes\mathscr{I}_{W}
\otimes\mathscr{I}_y\right)=1,$$
and that the corresponding curve on $\mathbb{F}$ is smooth and irreducible. 

Since
$h^0(\mathbb{F},\mathscr{O}_{\mathbb{F}}(C_0+a f))=a+2$, we must have 
$h^0(\mathbb{F},\mathscr{O}_{\mathbb{F}}(C_0+a f)\otimes\mathscr{I}_{W}
\otimes\mathscr{I}_y)\ge 1$.

Let $C_1\in |C_0+a f|$ be a curve containing $W$ and $y$.
 Observe first that
$C_1$ is irreducible. Otherwise, since $C_1\cdot f=1$, 
there is a unique irreducible component $C'_1$ of $C_1$ 
such that $C'_1\cdot f=1$ and
$C_1\equiv C'_1+r f$ for some 
$r\ge 1$ ($C'_1$ is a section of $\pi_{|\mathbb{F}}$). In particular, $C'_1\in |C_0+b f|$ with $b<a$. By \cite[Corollary V.2.18]{hartshorne}, we have $b=0$ and $C'_1=C_0$.
Thus $C_1=C_0\cup f_1 \cup \cdots \cup f_a$ where the $f_i$'s are possibly equal fibers of $\pi_{|\mathbb{F}}$. 

Notice that $W$ is a subscheme of both $C_1=C_0\cup f_1 \cup \cdots \cup f_a$ and $\wi{G}_Y\cap \mathbb{F}$.
Since
$\wi{G}_Y\cap \mathbb{F}$ is disjoint from $C_0$, we must have $$W=\{p_1,\dotsc,p_a\}\subseteq (f_1\cup\cdots\cup f_a)\cap \wi{G}_Y.$$ On the other hand  
$\wi{G}_Y$ intersects transversally any fiber of $\pi_{|\mathbb{F}}$, thus we get
$\{p_1,\ldots,p_a\}=
\{f_1\cap \wi{G}_Y, \ldots, f_a\cap \wi{G}_Y\}$, and up to renumbering we can assume that $f_i$ is the fiber of $\pi_{|\mathbb{F}}$ containing $p_i$.
This implies that $y \in C_1\subseteq\pi^{-1}(A)\cup G_Y$, which
 contradicts our choice of $y$. Thus $C_1$ is irreducible,
hence it is a section of $\pi_{|\mathbb{F}}$, and $C_1\cong\pr^1$.

To show that $C_1$ is unique, let 
 $C_2\in |C_0+a f|$ be another curve containing $W$ and $y$. Then $C_2$ is irreducible, $C_1\cdot C_2=a$, and 
$\{p_1,\ldots,p_a,y\} \subseteq  C_1\cap C_2$, which implies that $C_1=C_2$.
This shows our claim.

\stepcounter{remarktwo}
\begin{figure} 
\begin{center}
\scalebox{0.4}{\includegraphics{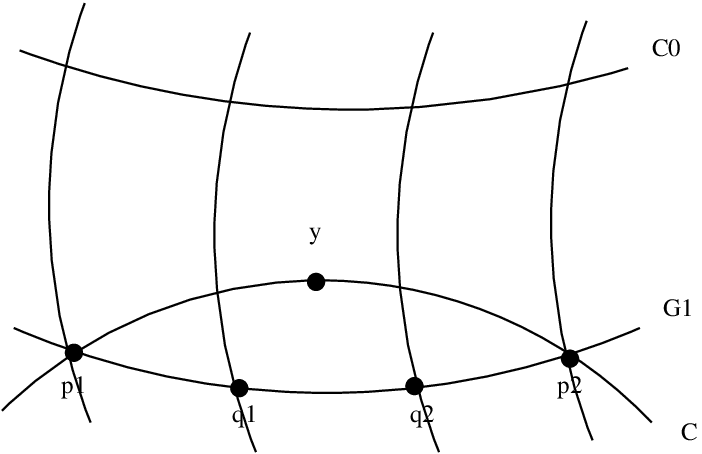}}
\caption{The surface $\mathbb{F}$. Here $d=4$, $a=2$, $A_Y\cap \mathbb{F}=\{p_1,p_2,q_1,q_2\}$, and
 $W=\{p_1,p_2\}$.}\label{fig2} 
\end{center}\end{figure}

\medskip

We remark that:
\renewcommand{\theequation}{\theremarktwo}\stepcounter{remarktwo}
\begin{equation}\label{Cartier} 
A_Y\cap C_1=\wi{G}_{Y|C_1}.\end{equation}
Indeed $C_1$ is not contained in $\wi{G}_Y$, because $y\not\in\wi{G}_Y$. Moreover
 $W\subseteq A_Y\cap C_1\subseteq \wi{G}_Y\cap C_1$ and $\wi{G}_Y\cdot C_1=a$, so that
$W=A_Y\cap C_1=\wi{G}_{Y|C_1}$.

\medskip

We define $C\subset X$ to be the  strict transform of $C_1\subset Y$, so that $C$ is a smooth rational curve through $x$.
 It is not difficult to see that $C\equiv C_G+a\wi{F}$, and hence $-K_X\cdot C=n$ (see Table \ref{table}).
\begin{lemma}\label{free}
If $\ell\cap A$ is either reduced, or has a unique non-reduced point of multiplicity $2$, then:
$$
{T_X}_{|C}\cong \mathscr{O}_{\mathbb{P}^1}(2)\oplus \mathscr{O}_{\mathbb{P}^1}(1)^{n-2}\oplus\mathscr{O}_{\mathbb{P}^1},
$$
hence $C$ is a standard, free, smooth  rational curve.
\end{lemma}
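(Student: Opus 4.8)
The plan is to determine ${T_X}_{|C}$ by computing the normal bundle $N_{C/X}$ and comparing it, through the differential of $\sigma$, to the normal bundle of $C_1$ in $Y$. First I would reduce the statement to the claim
$$N_{C/X}\cong\ol_{\pr^1}(1)^{n-2}\oplus\ol_{\pr^1}.$$
Granting this, the normal bundle sequence $0\to T_C\to {T_X}_{|C}\to N_{C/X}\to 0$ becomes $0\to\ol_{\pr^1}(2)\to {T_X}_{|C}\to\ol_{\pr^1}(1)^{n-2}\oplus\ol_{\pr^1}\to 0$, and it splits because $\textup{Ext}^1(N_{C/X},\ol_{\pr^1}(2))\cong H^1(\ol_{\pr^1}(1)^{n-2}\oplus\ol_{\pr^1}(2))=0$. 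This yields the asserted splitting type; in particular $C$ is standard and free.

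To compute $N_{C_1/Y}$ I would use the flag $C_1\subset\mathbb{F}\subset Y$. Since $C_1\equiv C_0+af$ on $\mathbb{F}\cong\mathbb{P}_{\pr^1}(\ol_{\pr^1}\oplus\ol_{\pr^1}(a))$, one has $C_1^2=a$ and hence $N_{C_1/\mathbb{F}}\cong\ol_{\pr^1}(a)$; and since $\mathbb{F}=\pi^{-1}(\ell)$ with $\pi$ smooth and $\pi_{|C_1}\colon C_1\xrightarrow{\sim}\ell$, we get ${N_{\mathbb{F}/Y}}_{|C_1}\cong{(\pi^*N_{\ell/\pr^{n-1}})}_{|C_1}\cong\ol_{\pr^1}(1)^{n-2}$. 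As $\textup{Ext}^1(\ol_{\pr^1}(1)^{n-2},\ol_{\pr^1}(a))=0$, the sequence $0\to N_{C_1/\mathbb{F}}\to N_{C_1/Y}\to {N_{\mathbb{F}/Y}}_{|C_1}\to 0$ splits, so $N_{C_1/Y}\cong\ol_{\pr^1}(a)\oplus\ol_{\pr^1}(1)^{n-2}$, the first summand being $N_{C_1/\mathbb{F}}$.

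Next I would observe that $d\sigma$ induces an injection $N_{C/X}\hookrightarrow N_{C_1/Y}$ of bundles on $\pr^1$ whose cokernel $\mathcal{T}$ is a torsion sheaf of length $E\cdot C=a$, supported over $W=C_1\cap A_Y$. A local computation in coordinates adapted to the blow-up of the codimension two centre $A_Y$ shows that at a reduced point of $W$, where $C_1$ meets $A_Y$ transversally, the cokernel has length one in the direction of $N_{C_1/\mathbb{F}}=\ol_{\pr^1}(a)$; while at the point of multiplicity two, which occurs exactly where $\ell$ is tangent to $A$, the cokernel is cyclic of length two and the surjection $N_{C_1/Y}\to\mathcal{T}$ there involves both the $\ol_{\pr^1}(a)$ summand and one $\ol_{\pr^1}(1)$ summand, with an invertible coefficient on the latter. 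When $\ell\cap A$ is reduced these $a$ transverse modifications lower $\ol_{\pr^1}(a)$ to $\ol_{\pr^1}$ and leave $\ol_{\pr^1}(1)^{n-2}$ untouched, giving $N_{C/X}\cong\ol_{\pr^1}\oplus\ol_{\pr^1}(1)^{n-2}$ directly.

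The main obstacle, and the only place where the hypothesis on the multiplicities is used, is the case of one double point: here the length two modification is not diagonal, and I would isolate the rank two subbundle $\mathcal{K}=\ker\bigl(\ol_{\pr^1}(a)\oplus\ol_{\pr^1}(1)\to\mathcal{T}\bigr)$ coming from the two affected summands (the remaining $n-3$ copies of $\ol_{\pr^1}(1)$ splitting off untouched). A naive count leaves open $\mathcal{K}\cong\ol_{\pr^1}(2)\oplus\ol_{\pr^1}(-1)$, which I would exclude by showing $h^0(\mathcal{K}(-1))=1$: twisting the defining sequence down by one and using that the double point imposes a Hermite (value and derivative) condition whose leading coefficient is the invertible one above, the map $H^0(\ol_{\pr^1}(a-1)\oplus\ol_{\pr^1})\to H^0(\mathcal{T})$ is surjective, forcing $\mathcal{K}\cong\ol_{\pr^1}(1)\oplus\ol_{\pr^1}$ and hence $N_{C/X}\cong\ol_{\pr^1}(1)^{n-2}\oplus\ol_{\pr^1}$. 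The delicate point is precisely this local-to-global step: one must verify that the length two cokernel does not drop a single $\ol_{\pr^1}(1)$ to $\ol_{\pr^1}(-1)$, and it is the restriction to multiplicity two that guarantees this (a point of higher multiplicity would create a negative summand, so that $C$ would no longer be standard).
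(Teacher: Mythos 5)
Your route is genuinely different from the paper's proof, which never analyzes $\sigma$ by itself but instead restricts the differential of the conic bundle $\ph=\pi\circ\sigma$ to $C$; and much of your skeleton is sound: the reduction to $N_{C/X}\cong\ol_{\pr^1}(1)^{n-2}\oplus\ol_{\pr^1}$, the computation $N_{C_1/Y}\cong\ol_{\pr^1}(a)\oplus\ol_{\pr^1}(1)^{n-2}$ via the flag $C_1\subset\mathbb{F}\subset Y$, and the elementary-modification setup with $\mathcal{T}$ of length $a$ are all correct. The genuine gap is in your case analysis. For a point $p$ of $W$ let $h$ be its multiplicity in $\ell\cap A$ and $k\le h$ its multiplicity in $W$. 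Your dichotomy treats $k=1$, $h=1$ (modification in the $\ol_{\pr^1}(a)$-direction) and $k=h=2$ (cyclic length-two modification), but omits the sub-case $h=2$, $k=1$, i.e.\ the tangency point of $\ell$ with $A$ lying in $W$ with multiplicity exactly one. This case is allowed by the hypothesis and cannot be skipped: in the proof of Theorem \ref{tau_x} the lemma is applied to \emph{every} $W\subseteq\ell\cap A$, and it is exactly the case ($C\cap E\cap\wi{E}\neq\emptyset$) on which the paper's own proof concentrates. At such a point $W=C_1\cap A_Y$ is reduced and $C_1$ meets $A_Y$ transversally, so your first local claim would apply to it --- but that claim is false there. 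The fiber of $\mathcal{T}$ at $p$ is $T_pY/(T_pA_Y+T_pC_1)$; since $A_Y\cap\mathbb{F}\cong\ell\cap A$ is non-reduced at $p$, one has $T_pA_Y\cap T_p\mathbb{F}\neq 0$, and any nonzero vector of this intersection is not in $T_pC_1$ by transversality, whence $T_p\mathbb{F}\subseteq T_pA_Y+T_pC_1$. So the surjection $N_{C_1/Y}|_p\to\mathcal{T}_p$ \emph{kills} the fiber of $N_{C_1/\mathbb{F}}=\ol_{\pr^1}(a)$, the opposite of what you assert, and your bookkeeping breaks: $\ol_{\pr^1}(a)$ is lowered only $a-1$ times while one $\ol_{\pr^1}(1)$ is lowered once. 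The conclusion happens to survive, since $\ol_{\pr^1}(1)\oplus\ol_{\pr^1}(1)^{n-3}\oplus\ol_{\pr^1}$ is again the asserted bundle, but this must be recognized and proved; it is also precisely where the hypothesis enters, because \emph{two} points with $h=2$, $k=1$ force a summand of degree $\ge 2$ (a non-standard curve, consistent with Theorem \ref{tau_x}).

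A second, lesser issue: even where your local claims hold, ``leaving the other summands untouched'' is not automatic, since the kernels of the fiber surjections are only transverse to $\ol_{\pr^1}(a)$, not equal to the fibers of a fixed complement; one needs a splitting adapted simultaneously to all points of $W$ (possible by interpolation, as $h^0(\ol_{\pr^1}(a-1))=a$) or a cohomological substitute. Both issues can be repaired at once while keeping your strategy: it suffices to prove $h^0(N_{C/X}(-2))=0$, because a bundle of rank $n-1$ and degree $n-2$ on $\pr^1$ all of whose summands have degree $\le 1$ must be $\ol_{\pr^1}(1)^{n-2}\oplus\ol_{\pr^1}$. Twisting $0\to N_{C/X}\to N_{C_1/Y}\to\mathcal{T}\to 0$ by $\ol_{\pr^1}(-2)$ reduces this to the injectivity of $H^0(\ol_{\pr^1}(a-2))\to H^0(\mathcal{T})$, where the map is induced by the composite $\ol_{\pr^1}(a)\hookrightarrow N_{C_1/Y}\to\mathcal{T}$. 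Under the hypothesis of the lemma this composite is fiberwise nonzero at every point of $W$ except possibly the single point with $h=2$, $k=1$, so its kernel is $\ol_{\pr^1}(a)(-Z)$ with $\deg Z\ge a-1>a-2$, and the injectivity follows; this one computation handles all three sub-cases uniformly.
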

\begin{proof}
Suppose first that $C\cap (E\cap \wi{E})=\emptyset$.
This implies that $\ph$ is smooth in a neighbourhood of $C$, thus
the map ${T_X}_{|C}\to {\ph^* T_{\pr^{n-1}}}_{|C}$ is onto. Its kernel is a torsion free sheaf of rank 1 on $C$, hence a locally free sheaf, of degree $-K_X \cdot C + \ph^*K_{\pr^{n-1}} \cdot C=0$. Since 
${\ph^* T_{\pr^{n-1}}}_{|C}\cong \mathscr{O}_{\mathbb{P}^1}(2)\oplus\mathscr{O}_{\mathbb{P}^1}(1)^{n-2}$, this implies the statement.

Suppose now that $C\cap (E\cap \wi{E})\neq\emptyset$, and let $x_1$ be a point in this intersection. Set $z_1:=\ph(x_1)$, and let $h\in\{1,2\}$ (respectively, $k\in\{1,h\}$) be the multiplicity of $z_1$ in $\ell\cap A$ (respectively, in $W\subseteq\ell\cap A$ under the isomorphism $\wi{G}_Y\cong \pr^{n-1}$).

Notice that $E$ and $\wi{E}$ are Cartier divisors on $X$, and they do not contain $C$ in their support.

As $\sigma$ is an isomorphism on $C$, we have $E_{|C}=C\cap E\cong C_1\cap A_Y= W$.
On the other hand:
$$(E+\wi{E})_{|C}=\ph^*(A)_{|C}=\ph_{|C}^*(A_{|\ell}),$$
therefore we have the following equalities of divisors on $C$:
$$C\cap\widehat{E}=\wi{E}_{|C}=\ph_{|C}^*(A_{|\ell}-W).$$
In particular $z_1$ has multiplicity $h-k$ in $\ph_*(C\cap\widehat{E})$, and since $x_1\in C\cap\widehat{E}$, we have $h-k>0$. We deduce that $h=2$, $k=1$, and $z_1$ is the unique non-reduced point in $\ell\cap A$.

As $W\cap (A_{|\ell}-W)=\{z_1\}$, we also deduce that $C$ intersects 
$E\cap\wi{E}$ only in $x_1$. Moreover, since $C$ is transverse to $E$ (and to $\wi{E}$) in $x_1$, we have:
$$C\cap E\cap\widehat{E}=\{x_1\}.$$

Now let us consider the tangent morphism
 $d\ph\colon {T_X}\to \ph^* T_{\pr^{n-1}}$, which is surjective outside $E\cap\widehat{E}$, and has rank $n-2$ in every point $x\in E\cap\widehat{E}$, with image $T_{A,\ph(x)}$.
Let us also consider the natural surjective morphism:
$$\xi\colon \ph^* T_{\pr^{n-1}}\longrightarrow \ph^*(T_{\pr^{n-1}}/T_A)_{|E\cap\wi{E}}\,.$$
Since $E$ and $\wi{E}$ intersect transversally, a local computation shows 
 that the image 
 of $d\ph$ is the subsheaf of 
$\ph^* T_{\pr^{n-1}}$ given by the kernel of $\xi$. In other words, we have an exact sequence:
$${T_X}\stackrel{d\ph}{\longrightarrow} \ph^* T_{\pr^{n-1}}\stackrel{\xi}{\longrightarrow} \ph^*(T_{\pr^{n-1}}/T_A)_{|E\cap\wi{E}}
\longrightarrow 0.$$

Let us now restrict to the curve $C$. Then
 $\ph^* (T_{\pr^{n-1}}/T_A)_{|E\cap\wi{E}\cap C}=\mathbb{C}_{x_1}$ is a skyscraper sheaf, and we have an exact sequence:
$${T_X}_{|C}\stackrel{d\ph_{|C}}{\longrightarrow} 
\ph^*{T_{\pr^{n-1}}}_{|C}\stackrel{\xi_{|C}}{\longrightarrow} \mathbb{C}_{x_1}
\longrightarrow 0,$$
where
$\xi_{|C}$ is just the evaluation map.

We have $\ph^*{T_{\pr^{n-1}}}_{|C}\cong\mathscr{O}_{\mathbb{P}^1}(2)\oplus\mathscr{O}_{\mathbb{P}^1}(1)^{n-2}$, and
the factor $\mathscr{O}_{\pr^1}(2)\cong\ph^*{T_{\ell}}_{|C}\subset\ph^*{T_{\pr^{n-1}}}_{|C}$ is contained in $\ker(\xi_{|C})$, because $T_{\ell,z_1}\subset T_{A,z_1}$. Therefore we get an induced surjective morphism $\mathscr{O}_{\mathbb{P}^1}(1)^{n-2}\to \mathbb{C}_{x_1}$, whose kernel is $\mathscr{O}_{\pr^1}(1)^{n-3}\oplus\mathscr{O}_{\pr^1}$. This gives an exact sequence
$$0\longrightarrow \mathscr{O}_{\pr^1}(2)\longrightarrow\ker(\xi_{|C})
\longrightarrow \mathscr{O}_{\pr^1}(1)^{n-3}\oplus\mathscr{O}_{\pr^1}
\longrightarrow 0,$$
which yields
$\ker(\xi_{|C})\cong\mathscr{O}_{\pr^1}(2)
\oplus\mathscr{O}_{\pr^1}(1)^{n-3}\oplus\mathscr{O}_{\pr^1}$.

Thus $d\ph_{|C}$ yields a surjective map ${T_X}_{|C}\to\mathscr{O}_{\pr^1}(2)\oplus \mathscr{O}_{\pr^1}(1)^{n-3}\oplus\mathscr{O}_{\pr^1}$.
The kernel of this morphism is a torsion free sheaf of rank 1 on
$C$, hence a locally free sheaf, of degree $-K_X\cdot C-(n-1)=1$. Finally,
the exact sequence
$$0\longrightarrow\mathscr{O}_{\pr^1}(1)\longrightarrow
{T_X}_{|C}\stackrel{d\ph_{|C}}{\longrightarrow}\mathscr{O}_{\pr^1}(2)\oplus \mathscr{O}_{\pr^1}(1)^{n-3}\oplus\mathscr{O}_{\pr^1}
\longrightarrow 0$$
gives the statement.
\end{proof}
\begin{lemma}\label{lu}
Let $C'$ be an effective one-cycle in $X$ such that $C'\equiv C$ and $C'\cap X_0\neq\emptyset$. 

Then $C'$ is integral and is obtained as in \ref{construction}, for some choices of $\ell'\subset\pr^{n-1}$, $x'\in X$, and $W'\subseteq \ell'\cap A$. 
In particular, $C'$ is again a smooth, connected rational curve.
\end{lemma}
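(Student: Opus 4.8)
The plan is to work with the conic bundle $\ph=\pi\circ\sigma\colon X\to\pr^{n-1}$ and to combine the intersection table \ref{table} (with $\delta=1$ and $i_Z=n$) with the hypothesis $C'\cap X_0\neq\emptyset$. Writing classes in the basis $[F],[\wi F],[C_G]$ of $\N(X)$, we have $[C']=[V]=C_G+a\wi F$, so that $E\cdot C'=a$, $\wi E\cdot C'=d-a$, $G\cdot C'=\wi G\cdot C'=0$, and $\ph^*\ol_{\pr^{n-1}}(1)\cdot C'=1$. Since $\ph^*\ol_{\pr^{n-1}}(1)$ is nef of degree $1$ on $C'$, exactly one component $C'_0$ of $C'$ is not contracted by $\ph$; it appears with multiplicity one and maps birationally onto a line $\ell'=\ph(C'_0)\subset\pr^{n-1}$, while every other component lies in a fibre of $\ph$.

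First I would rule out that $C'$ contains a smooth fibre $\Phi\equiv F+\wi F$ of $\ph$ (necessarily over a point of $\pr^{n-1}\smallsetminus A$). Recall from \eqref{equiv} that $[C_{\wi G}]=[C_G]+a[\wi F]-(d-a)[F]$, so in the basis $[F],[\wi F],[C_G]$ the four extremal generators $[F],[\wi F],[C_G],[C_{\wi G}]$ of $\NE(X)$ (Lemma \ref{NE(X)}) all have non-negative $[\wi F]$-coordinate, while only $[C_{\wi G}]$ has negative $[F]$-coordinate. If $\Phi$ were a component of $C'$, then $[C']-[\Phi]=C_G+(a-1)\wi F-F$ would be effective with $[F]$-coordinate $-1$ and $[\wi F]$-coordinate $a-1$. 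Decomposing it into irreducible classes, the only curves of negative $\wi G$-degree lie in $\wi G\cong\pr^{n-1}$ and have class a positive integral multiple of $[C_{\wi G}]$; hence the total $[C_{\wi G}]$-contribution $M$ is a positive integer, forcing the $[\wi F]$-coordinate to be at least $aM\ge a>a-1$, a contradiction (when $a=d$ the contradiction is immediate, as then no effective class has negative $[F]$-coordinate). Therefore every component of $C'$ other than $C'_0$ is a fibre component over a point of $A$, hence is contained in $E\cup\wi E$; since $X_0\cap(E\cup\wi E)=\emptyset$, the point of $C'\cap X_0$ must lie on $C'_0$.

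It then follows that $C'_0\not\subset E\cup\wi E\cup G\cup\wi G$, so $\wi G\cdot C'_0\ge0$ and $G\cdot C'_0\ge0$. Writing $[C'_0]=\alpha F+\beta\wi F+C_G$ (the $[C_G]$-coordinate is $1$ because $\ph^*\ol_{\pr^{n-1}}(1)\cdot C'_0=1$), these inequalities read $\alpha\ge0$ and $\beta-a\ge0$, while the remaining components, being of class $[F]$ or $[\wi F]$, contribute $pF+q\wi F$ with $p,q\ge0$. Comparing with $[C']=a\wi F+C_G$ forces $\alpha=p=0$ and $\beta=a$, $q=0$; thus $C'=C'_0$ is integral, of class $[C]$, mapping birationally onto $\ell'$, and $\ell'\not\subset A$ since $C'$ meets $X_0\subseteq\ph^{-1}(\pr^{n-1}\smallsetminus A)$. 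To finish, as $C'\not\subset E$ and $E\cdot C'=a$, the map $\sigma$ sends $C'$ isomorphically onto a section $\sigma(C')$ of $\pi$ over $\ell'$ inside $\mathbb{F}':=\pi^{-1}(\ell')\cong\pr_{\pr^1}(\ol_{\pr^1}\oplus\ol_{\pr^1}(a))$, of class $C_0+af$, and $W':=A_Y\cap\sigma(C')$ is a length-$a$ subscheme of $A_Y\cap\mathbb{F}'$. Hence $\sigma(C')$ is a curve in $|C_0+af|$ through $W'$ and through $\sigma(x')$ for $x'\in C'\cap X_0$, so by the uniqueness established in Construction \ref{construction} it is exactly the curve associated to $(\ell',W',x')$; in particular $C'$ is smooth, connected and rational. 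I expect the main obstacle to be the second paragraph: using only the openness condition $C'\cap X_0\neq\emptyset$ to exclude every reducible configuration (in particular the split classes $C_G+a\wi F$ and $C_{\wi G}+(d-a)F$), which is precisely where the relations \eqref{equiv} and the cone description of Lemma \ref{NE(X)} are needed.
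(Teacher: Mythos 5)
Your proof is correct and follows essentially the same strategy as the paper's: isolate the unique component $C'_0$ mapping birationally onto a line $\ell'$, use intersection numbers with $G$ and $\wi{G}$ together with the hypothesis $C'\cap X_0\neq\emptyset$ to eliminate all vertical components, and then identify $\sigma(C')$ as the unique member of $|C_0+af|$ through a length-$a$ subscheme $W'$ and the point $\sigma(x')$, invoking the uniqueness from Construction \ref{construction}. The only differences are cosmetic bookkeeping: the paper kills the fiber components directly from $0=G\cdot C'=G\cdot C''+s+h$ and $0=\wi{G}\cdot C'=\wi{G}\cdot C''+r+h$ (noting $C''$ cannot lie in $G\cup\wi{G}$), where you instead argue in coordinates via Lemma \ref{NE(X)} and \eqref{equiv}, and the paper pins down the class $C_0+af'$ of $\sigma(C')$ via the relative anticanonical degree $-K_{\mathbb{F}'/\ell'}\cdot\sigma(C')=a$ rather than via $E\cdot C'=a$ and $\wi{G}\cdot C'=0$.
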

\begin{proof}
Since  $\ph_*(C')\equiv \ph_*(C)$, it follows that $\ph(C')$ is a line $\ell'$ in $\pr^{n-1}$, and $\ell'\not\subseteq A$ because $C'\not\subseteq\ph^{-1}(A)=E\cup\wi{E}$. Moreover,  there is a unique irreducible component 
$C''$ of $C'$ that maps onto $\ell'$, and
$C'' \to \ell'$ is a birational morphism.

Therefore we have 
$$C'=C''+ F_1+\cdots+ F_r+\wi{F}_1+\cdots +\wi{F}_s+ e_1+\cdots+ e_h,$$
where the $F_i$'s are (possibly equal) fibers of $\ph_{|E}$, the $\wi{F}_i$'s are fibers of $\ph_{|\wi{E}}$, and the $e_i$'s are fibers of $\ph$ over $\pr^{n-1}\smallsetminus A$. Moreover $C'\equiv C_G+a\wi{F}$,  and using Table \ref{table} we get: $$0=G\cdot C'=G\cdot C''+s+h\quad\text{and}\quad
0=\wi{G}\cdot C'=\wi{G}\cdot C''+r+h.$$
Since $C''$ is irreducible and $G\cap\wi{G}=\emptyset$, the intersections $G\cdot C''$ and $\wi{G}\cdot C''$ cannot  be both negative; this yields $h=0$. Then $C''$ cannot be contained in $G\cup\wi{G}$, because $C'$ is not contained in $E\cup\wi{E}\cup G \cup \wi{G}$. We conclude that $r=s=0$, and $C'=C''$ is integral.

Set $\mathbb{F}':=\pi^{-1}(\ell')\subset Y$, and denote by $\pi_{|\mathbb{F}'}$ the restriction of $\pi$ to $\mathbb{F}'$. We consider the curve $C_1':=\sigma(C')\subset \mathbb{F}'$.

Since $\pi_{|C'_1}$ has degree one, we have $C'_1\equiv C'_{0}+rf'$, where
$C'_{0}=G_Y\cap \mathbb{F}'$ is the minimal section of $\pi_{\ell'}$, $f'$ is one of its fibers, and $r\in\mathbb{Z}$.

On the other hand we have
$$-K_{\mathbb{F}'/\ell'}\cdot C'_1= -K_{Y/\pr^{n-1}}\cdot C'_1=-K_{Y/\pr^{n-1}}\cdot C_1=-K_{\mathbb{F}/\ell}\cdot C_1=a,$$
where $C_1:=\sigma(C)$.
Therefore $r=a$, and 
 $C'_1 \in |C'_{0}+af'|$.

Finally, as $E\cdot C'=a$ and $C'_1$ is smooth, $C'_1$ intersects $A_Y$ in a zero-dimensional subscheme $W'$ of length $a$. This shows that $C'$ is a curve obtained via Construction \ref{construction}.
\end{proof}
\begin{proof}[Proof of Proposition~\ref{examples}]
If $a=0$, the statement is clear.

Suppose that $a>0$. By Theorem \ref{grass}, pairs $(\ell,W)$ where $\ell\subset\pr^{n-1}$ is a line not contained in $A$, and  $W\subseteq \ell\cap A$ is a subscheme of length $a$, vary in an irreducible family of dimension $2n-4$.
 Using this, it is not difficult to show that varying $(\ell, W,x)$,  construction \ref{construction} yields an irreducible algebraic family of smooth, connected rational curves in $X$, of anticanonical degree $n$. 
More precisely, we get a locally closed irreducible subvariety $V_0$ of $\Rat(X)$ of dimension $2n-3$, whose points correspond to curves $C$ obtained as in \ref{construction}.

By Lemma \ref{free}, a general curve $C$ from $V_0$ is free and hence yields a smooth point in $\Rat(X)$. Moreover,
$\dim_{[C]} \Rat(X)=-K_X\cdot [V]+n-3=2n-3$.
This implies that the closure $V$ of $V_0$ 
in $\Rat(X)$ is an irreducible component, and that $V$ gives a dominating family of curves of anticanonical degree $n$.

On the other hand, Lemma \ref{lu} shows that $V_x$ is proper for every $x\in X_0$, so that $V$ is a locally unsplit family.
\end{proof}
\begin{remark}\label{minimaldegree}
Let $\mathscr{H}$ be an ample line bundle on $X$. If $2\le a \le d-2$, then $V$ is not a dominating family of rational curves of minimal degree with respect to $\mathscr{H}$. 
Indeed, the family $W$ of curves on $X$ whose points correspond to smooth fibers of $\ph$ is a locally unsplit dominating family of rational curves, and $[W]\equiv F+\wi{F}$. Thus   
$$\mathscr{H}\cdot [V]=\frac{1}{2}\left(\mathscr{H}\cdot C_G+\mathscr{H}\cdot C_{\wi{G}}+
a \mathscr{H}\cdot \wi{F}+(d-a) \mathscr{H}\cdot F\right)
>\mathscr{H}\cdot [W].$$
\end{remark}
\begin{proof}[Proof of Theorem~\ref{main}]
The first part of the statement follows from  Proposition~\ref{prop_main}. Assume that $\rho_X=3$. Then, 
again by Proposition~\ref{prop_main},
 $X$ is isomorphic to one of the varieties described in Example~\ref{ex}, and 
$[V]\equiv C_{\wi{G}}+(d-a)F$ (notation as in Example~\ref{ex} and Proposition~\ref{prop_main}). By \eqref{equiv}, this is the same as $[V]\equiv C_G+a\wi{F}$. This means that the curves of the family $V$ are numerically equivalent to the curves obtained in Construction \ref{construction}. Thus Lemma \ref{lu} implies that the family $V$ coincides with the family of curves constructed in the proof of Proposition \ref{examples}.
\end{proof}
\begin{proof}[Proof of Theorem~\ref{V_x}]
As before, we can assume that $a>0$.
Let $x\in X_0$ be a general point, and set  $z:=\ph(x)\in\pr^{n-1}$.
Let $p_z\colon A \to \mathbb{P}^{n-2}$ be the morphism of degree $d$ induced by the linear
 projection $\mathbb{P}^{n-1}\dashrightarrow \mathbb{P}^{n-2}$ from $z$, where we see $\pr^{n-2}$ as the variety of lines $\ell$ through $z$ in $\pr^{n-1}$. Then the pairs  $(\ell,W)$ such that $\ell\subset\pr^{n-1}$ is a line through $z$, and $W\subset A\cap\ell$ is a zero-dimensional subscheme of length $a$, are parametrized by the relative Hilbert scheme $\mathcal{H}_z:=\textup{Hilb}^{[a]}(A/\mathbb{P}^{n-2})$.

Since $x$ is general, $\mathcal{H}_z$
is integral by Theorem~\ref{genus_hilbert}. 
Therefore, using \cite[Corollary III.12.9]{hartshorne}, Construction  \ref{construction} can be made  relatively over $\mathcal{H}_{z}$. 

One first constructs a subscheme $\mathcal{C}_1\subset Y\times \mathcal{H}_{z}$, such that the fiber of $\mathcal{C}_1\to \mathcal{H}_{z}$ over $(\ell,W)$ is the curve $C_1$. As $\mathcal{C}_1$ intersects $A_Y\times \mathcal{H}_{z}$ along the Cartier divisor $\wi{G}_Y\times \mathcal{H}_{z}$ (see \eqref{Cartier}), the  strict transform $$\mathcal{C}\subset X\times \mathcal{H}_{z}$$ of  $\mathcal{C}_1\subset Y\times \mathcal{H}_{z}$ is isomorphic to $\mathcal{C}_1$. In particular, the fiber  of $\xi\colon\mathcal{C}\to \mathcal{H}_{z}$ over $(\ell,W)$ is the curve $C$.

As $C$ is a smooth rational curve through $x$, $\xi\colon\mathcal{C}\to \mathcal{H}_{z}$ is a $\pr^1$-bundle with a section $s\colon \mathcal{H}_{z}\to \mathcal{C}$, given by $s(h)=(x,h)$. 

In particular, $\mathcal{C}$ is the projectivization of a rank $2$ vector bundle over $\mathcal{H}_{z}$, and it is locally trivial in the Zariski topology.

Fix a point $0\in\pr^1$.
Let $H_0\subseteq \mathcal{H}_{z}$ be an open subset such that $\xi^{-1}(H_0)\cong \pr^1\times H_0$. We can assume that the section $s_{|H_0}$, under this isomorphism, is identified with the constant section $\{0\}\times H_0$.

Therefore we get a morphism over $H_0$:
$$\pr^1\times H_0\to X\times H_0,$$
which is an embedding on $\pr^1\times\{h\}$, and sends $(0,h)$ to $(x,h)$, for every $h\in H_0$.
This yields a morphism $H_0\to\Hom(\pr^1,X,0\mapsto x)$.

Since $x$ is general, 
 every curve in $V_x$ is free, and 
$\Hom(\pr^1,X,0\mapsto x)$ contains a union of smooth irreducible components $\widehat{V}_x$, whose image in $\Rat(X,x)$ is $V_x$.
By construction, the morphism  $H_0\to\Hom(\pr^1,X,0\mapsto x)$  takes values in $\widehat{V}_x$.

By \cite[Theorem II.2.16]{kollar}, these morphisms glue together and yield a morphism
$$\Psi\colon \mathcal{H}_{z}\longrightarrow V_x.$$ 

By Lemma \ref{lu} the morphism $\Psi$ is surjective. Moreover, the pair $(\ell,W)$ determines uniquely the curve $C$ in Construction \ref{construction}. As $C$ is smooth, it corresponds to a unique point in $V_x$; therefore
 $\Psi$ is injective.

Finally, $V_x$ being smooth, we conclude that $\Psi$ is an isomorphism, $\mathcal{H}_{z}$ is smooth, and $V_x$ is irreducible.
\end{proof}
Notice that $a$ and $a'=d-a$ yield not only the same variety $X$ (see Remark \ref{symm}), but also the same family $V$ of rational curves on $X$, and $V_x\cong\Hilb^{[a]}(A/\mathbb{P}^{n-2})\cong\Hilb^{[d-a]}(A/\mathbb{P}^{n-2})$ for general $x\in X$.

The cases $a\in\{0,d\}$ and $a\in\{1,d-1\}$ are the simplest ones. For the reader's convenience, we describe explicitly the latter. 
\begin{example}\label{d=a+1} If $a=1$, then
 ${Y}$ is the blow-up of $\pr^n$ at a point. Thus $X$ is the blow-up of $\pr^n$ along $\{p_0\}\cup A$, where $A$ is smooth, of dimension $n-2$, degree $d$, contained in a hyperplane $H$, and $p_0\not\in H$.
This  is one of the few examples of Fano manifolds obtained by blowing-up a point in another manifold, see 
 \cite{bonwisncamp}.

The general curve of the family $V$ is the  strict transform in $X$ of a line in $\pr^n$ intersecting $A$ in one point.

Therefore for a general point $x\in X$ we have
$V_x\cong A\cong\Hilb^{[1]}(A/\mathbb{P}^{n-2})$.

From the point of view of the family of curves, this is essentially the same example as Example \ref{exampleM}; see also \cite[Example 1.7]{hwang_equivalence}.
\end{example}
Let us consider now the morphism
$$\tau_x\colon V_x \longrightarrow \mathcal{C}_x\subset\mathbb{P}(T_{X,x}^*),$$ 
where $x$ is a general point (notation as in the Introduction).
We recall the following useful observation.
\begin{remark}\label{standard}
Let $[C]\in V_x$. Then
$\tau_x$ is an immersion at $[C]$ if and only if $C$ is standard, see \cite[Proposition 1.4]{hwangICTP} and \cite[Proposition 2.7]{carolina06}. 
\end{remark}
As $x$ is general, $\ph\colon X\to\pr^{n-1}$ is smooth at $x$, and the differential of $\ph$ at $x$ induces the linear projection $\mathbb{P}(T_{X,x}^*) \dashrightarrow \mathbb{P}(T_{\pr^{n-1},z}^*)$
from the point
$[T_{X/\pr^{n-1},x}]\in \mathbb{P}(T_{X,x}^*)$.

We have $[T_{X/\pr^{n-1},x}]\not\in\mathcal{C}_x$, because $\ph_{|C}$ is an isomorphism for every $C$ in $V_x$, and the projection restricts to a morphism $\Pi'\colon \mathcal{C}_x\to \pr^{n-2}=\mathbb{P}(T_{\pr^{n-1},z}^*)$.

Observe that there is a commutative diagram:
$$\xymatrix{{\mathcal{H}_z}\ar@/^1pc/[rr]^{\Phi}\ar[r]_{\Psi}\ar[dr]_{\Pi}& 
{V_x} \ar[r]_{\tau_x}&  {\mathcal{C}_x}\ar[dl]^{\Pi'}\\
 &  {\pr^{n-2}} &
}
$$
where we have set $\Phi:=\tau_x\circ\Psi\colon \mathcal{H}_z\to\mathcal{C}_x$.
\begin{proof}[Proof of Theorem \ref{tau_x}]
Since $\Pi$ has degree $\binom{d}{a}$, $\Psi$ is an isomorphism, and $\tau_x$ is birational, we conclude that $\Pi'$ has degree   $\binom{d}{a}$, therefore
 $\mathcal{C}_x\subset\pr^{n-1}$ is a hypersurface of degree $\binom{d}{a}$. Moreover $\mathcal{C}_x$ is irreducible, because $V_x$ is.

We have seen in the above Example
that $\tau_x$ is an isomorphism for $a=1$ and $a=d-1$, and the statement is clear if $a=0$ or $a=d$.

We suppose from now on that $2\leq a\leq d-2$; notice that in particular $d\geq 4$.
Since $\Psi$ is an isomorphism, the statement follows if we show that the closed subset where $\Phi$ is not an isomorphism (respectively, an immersion) has codimension $1$ (respectively, $2$).

Let $L\subset\pr^{n-2}$ be a general line. Then $A_L:=p_z^{-1}(L)$ is a smooth plane curve of degree $d$,
and
$\Pi^{-1}(L)=\Hilb^{[a]}(A_L/L)$ is a smooth curve of genus $1+\frac{1}{2}\binom{d}{a}(a(d-a)-2)$ by Theorem \ref{genus_hilbert} (4). 

On the other hand,
$(\Pi')^{-1}(L)$ is a plane curve of degree $\binom{d}{a}$, hence
it has arithmetic genus 
$p_a=\frac{1}{2}(\binom{d}{a}-1)(\binom{d}{a}-2)$. 

We claim that $\Phi_{|\Pi^{-1}(L)}$ cannot be an isomorphism onto its image.  By contradiction, if it were, the two curves should have the same genus, so we get:
$$\frac{(\binom{d}{a}-1)(\binom{d}{a}-2)}{2}=
1+\frac{1}{2}\binom{d}{a}\left(a(d-a)-2\right).$$
This yields easily
$\binom{d}{a}=a(d-a)+1$,
or equivalently $d(d-1)\cdots (d-a+1)=a!(a(d-a)+1)$.
 Since $a \le d-2$, we must have  
$$(d-2)\cdots(d-a+1)\geq \frac{a!}{2},$$
and hence 
$$d(d-1)=\frac{a!(a(d-a)+1)}{(d-2)\cdots(d-a+1)}\le 2(a(d-a)+1).$$ Equivalently, we get $d^2-(2a+1)d+2a^2-2\le 0$. It is easy to see that this contradicts the assumption $2\leq a\leq d-2$.

We conclude that the closed subset where $\Phi$ is not an isomorphism has codimension $1$.

\medskip

Let us consider again the plane curve $A_L$. Since  $A_L\to L$ is a general projection, every non-reduced fiber contains just one double point. Hence,
for every $[\ell]\in L$, the intersection $\ell\cap A$ is either reduced, or has  a unique non-reduced point, of multiplicity $2$. By Lemma \ref{free}, for every $W\subseteq \ell\cap A$,
the corresponding curve $C\subset X$ is standard, therefore $\tau_x$ is an immersion at $\Psi([W])$ by Remark \ref{standard}.

We have shown that  $\Phi$ is an immersion in every point of $\Pi^{-1}(L)$, hence the
closed subset where $\Phi$ is not an immersion has codimension at least $2$.
 
\medskip

Suppose now that $n\geq 4$, and let $P\subset\pr^{n-2}$ be a general plane. Then $A_P:=p_z^{-1}(P)$ is a smooth surface of degree $d$ in $\pr^3$, and $A_P\to P$ is the projection from a general point. Let $B\subset\pr^2$ be the branch curve of this projection. It is classically known that $B$ has only nodes and cusps, see \cite{cilibertoflamini}. Moreover, the fiber of $A_P\to P$ over a smooth point $z\in B$ contains just one double point, the fiber over a node contains two double points, and the fiber over a cusp contains one triple point (see \cite[Proposition 3.7]{cilibertoflamini}). The number of nodes in $B$ depends only on the degree $d$ of $A_P$, and more precisely it is $d(d-1)(d-2)(d-3)/2$, see \cite[Lemma 3.2(a)]{FLS}. 

Since $d\geq 4$, $B$ contains nodes, and we conclude that 
 there is at least one $[\ell]\in P$ such that $A\cap\ell=2p_1+2p_2+p_3+\cdots+p_{d-2}$. 
As $a\leq d-2$, we can consider the subscheme
 $W:=p_1+p_2+\cdots+p_a$ and the point $[W]\in \mathcal{H}_z$. 
Notice that the points $p_1$ and $p_2$ do appear in $W$, because $a\geq 2$.
Then by Theorem \ref{genus_hilbert} (2), the tangent space of the fiber $\Pi^{-1}([\ell])$ at $[W]$ has dimension $2$. 

On the other hand, as $\Pi'$ is a linear projection from a point, the tangent space of the fiber   $(\Pi')^{-1}([\ell])$ at $\Phi([W])$ has dimension at most $1$. This shows that $\Phi$ is not an immersion at $[W]$, and hence that the closed subset where $\Phi$ is not an immersion has codimension $2$.
\end{proof}
\begin{corollary}
Let $X$ and $V$ be as in Proposition \ref{examples}. Assume that $2\leq a\leq d-2$, and that $n\geq 4$. Then non-standard curves of the family $V$ cover $X$.
\end{corollary}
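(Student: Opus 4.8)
The plan is to leverage the fine analysis of the map $\tau_x\colon V_x\to\mathcal{C}_x$ carried out in the proof of Theorem~\ref{tau_x}. The corollary asserts that, when $2\le a\le d-2$ and $n\ge 4$, the curves of $V$ that are \emph{not} standard sweep out all of $X$. By Remark~\ref{standard}, a curve $[C]\in V_x$ is standard precisely when $\tau_x$ is an immersion at $[C]$; so the non-standard curves through a general point $x$ are exactly those parametrized by the locus where $\tau_x$ (equivalently $\Phi$) fails to be an immersion. The third part of Theorem~\ref{tau_x} tells us that this locus is non-empty of codimension $2$ in $V_x$; in particular there is at least one non-standard curve through each general $x$.

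\smallskip

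\noindent First I would fix a general point $x\in X$ and recall from Theorem~\ref{tau_x}(3) that the closed subset $\Sigma_x\subset V_x$ where $\tau_x$ is not an immersion has codimension $2$, hence is non-empty (since $\dim V_x=n-2\ge 2$). Every point of $\Sigma_x$ corresponds, via Remark~\ref{standard}, to a non-standard curve of $V$ passing through $x$. Thus through every general point of $X$ there passes at least one non-standard curve. It remains to promote this pointwise statement to the conclusion that such curves \emph{cover} $X$, i.e. that their union is dense.

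\smallskip

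\noindent For this I would argue as follows. Consider the universal family $\mathcal{U}\to V$ with evaluation morphism $e\colon\mathcal{U}\to X$, and let $N\subset V$ denote the closure of the locus of non-standard curves; being the non-immersion locus of a morphism, $N$ is a closed subset of $V$. Restricting the universal family over $N$ gives a family $\mathcal{U}_N\to N$ with an evaluation map $e_N\colon\mathcal{U}_N\to X$, and the image $e_N(\mathcal{U}_N)$ is exactly the union of the non-standard curves. The previous paragraph shows that $x\in e_N(\mathcal{U}_N)$ for every general $x\in X$: indeed each such $x$ lies on a curve parametrized by $\Sigma_x\subset N$. Since $e_N(\mathcal{U}_N)$ is a constructible set containing a dense subset of $X$, its closure is all of $X$, which is the assertion that the non-standard curves cover $X$.

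\smallskip

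\noindent \textbf{The main obstacle} is making precise that the non-standard curves through \emph{general} $x$ are detected by the \emph{global} codimension-$2$ locus of Theorem~\ref{tau_x}(3), rather than merely by a fiberwise statement that might a priori vary wildly with $x$. The cleanest way around this is to phrase everything in terms of the non-immersion locus of the relative tangent map $\tau\colon\mathcal{U}\dashrightarrow\mathbb{P}(T_X^*)$ over the open set of $X$ where $V$ behaves well, and to invoke the non-emptiness and codimension count of Theorem~\ref{tau_x}(3) uniformly; since $x$ is general, the geometry of $V_x$ (and hence of $\Sigma_x$) is constant in a dense open subset of $X$, so the non-emptiness of $\Sigma_x$ for one general $x$ propagates to all general $x$. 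Once this uniformity is in place, the covering statement is immediate from the density of the general points and the constructibility of the image.
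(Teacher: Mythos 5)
Your proposal is correct and follows exactly the paper's argument: the paper proves this corollary in one line by combining Remark \ref{standard} (standard $\Leftrightarrow$ immersion point of $\tau_x$) with Theorem \ref{tau_x}(3) (the non-immersion locus in $V_x$ is non-empty of codimension $2$ when $2\le a\le d-2$ and $n\ge 4$), so that a non-standard curve passes through every general point of $X$. Your additional constructibility argument for the covering statement is just a careful spelling-out of the same final step.
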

\begin{proof}
This follows from 
Remark \ref{standard} and Theorem \ref{tau_x}.
\end{proof}
\begin{example}[Fano $3$-folds]
Let $X$ and $V$ be as in Proposition \ref{examples}, and consider the case $n=3$. Then $X$ is Fano if and only if $a\leq 2$ and $d-a\leq 2$, in particular $d\leq 4$. Thus the only case where $X$ is Fano and $\tau_x$ is not an isomorphism is for $d=4$ and $a=2$. This Fano $3$-fold $X$ is N.~9 in \cite[\S 12.4]{FanoEMS}.

In this case $A\subset \pr^2$ is a smooth quartic,  $V_x\cong \mathcal{H}_z=\Hilb^{[2]}(A/\pr^1)$ is a smooth connected curve of genus $7$, and $\Pi\colon \mathcal{H}_z\to\pr^1$ has degree $6$. Using Theorem \ref{genus_hilbert} (2), one can describe precisely the ramification of $\Pi$.

On the other hand, $\mathcal{C}_x\subset\pr^2$ is an irreducible curve of degree $6$ and arithmetic genus $10$. The normalization $\mathcal{H}_z\to\mathcal{C}_x$ is an immersion, but it is not injective.
\end{example}
\section{Appendix: the relative Hilbert scheme}\label{appendix}
The proof of Theorem \ref{V_x} relies on the following results of independent interest.
\begin{thm}\label{genus_hilbert}
Fix integers $m$, $a$, and $d$, such that 
$m\geq 1$ and
$1\leq a\leq d$. Let $A\subset \pr^{m+1}$
be a smooth hypersurface of degree $d$,
 $z\in \pr^{m+1}\smallsetminus A$ a general point, and  
$p_z\colon A \to \mathbb{P}^{m}$ the linear projection from $z$ (where we identify $\pr^m$ with the variety of lines through $z$ in $\pr^{m+1}$).
\begin{enumerate}[(1)]
\item The relative Hilbert scheme $\textup{Hilb}^{[a]}(A/\mathbb{P}^{m})$
is an integral local complete intersection scheme of dimension $m$, and the natural morphism 
$$\Pi\colon\textup{Hilb}^{[a]}(A/\mathbb{P}^{m})\to \mathbb{P}^{m}$$
is flat and finite of degree $\binom{d}{a}$.
\item Let $[\ell]\in\pr^m$ and $[W]\in\Pi^{-1}([\ell])$.
Write $\ell\cap A=h_1p_1+\cdots+ h_rp_r$ with  $h_i\geq 1$ and $p_i\neq p_j$ for $i\neq j$, and 
$W=k_1p_1+\cdots+ k_rp_r$
 with $0\leq k_i\leq h_i$. The Zariski tangent space of the fiber $\Pi^{-1}([\ell])$ at $[W]$ has dimension
$\sum_{i=1}^r\min(k_i,h_i-k_i)$.
\item $\Pi$ is smooth at $[W]$ if and only if $W$ is a union of irreducible components of $\ell\cap A$, equivalently: $W\cap(\ell\cap A-W)=\emptyset$. 
\item Suppose that $m=1$. Then the curve $\textup{Hilb}^{[a]}(A/\mathbb{P}^1)$ is smooth of genus 
$g=1+\frac{1}{2}\binom{d}{a}(a(d-a)-2)$.
\end{enumerate}
\end{thm}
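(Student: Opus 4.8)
The backbone of the proof is a local description of $\Hilb^{[a]}(A/\pr^m)$ as a \emph{factorization scheme}. First I would blow up $\pr^{m+1}$ at $z$, realizing $A$ as a divisor of relative degree $d$, disjoint from the exceptional section, inside a $\pr^1$-bundle over $\pr^m$; away from that section this bundle is the total space of a line bundle, so locally over $\pr^m$ the family $A\to\pr^m$ is cut out by a monic polynomial $f=t^d+c_{d-1}(u)t^{d-1}+\cdots+c_0(u)$ in the fibre coordinate $t$. Since every ideal of $\mathbb{C}[t]/(f)$ is principal, a length-$a$ subscheme of a fibre $\ell\cap A=\Spec\mathbb{C}[t]/(f)$ is the same as a monic degree-$a$ divisor $g$ of $f$; writing $f=gh$ with $g,h$ monic of degrees $a$ and $d-a$ and matching coefficients presents $\Hilb^{[a]}(A/\pr^m)$ locally as the subscheme of $\pr^m\times\mathbb{A}^d$ defined by $d$ equations. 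As the fibres of $\Pi$ are the finite sets of such factorizations, $\Pi$ is quasi-finite; hence the source has pure dimension $m$ and is a local complete intersection, $\Pi$ is finite (being proper and quasi-finite) and, by miracle flatness (lci source, regular base, zero-dimensional fibres), flat, and its degree is $\binom{d}{a}$ as computed on a general fibre of $d$ distinct points.

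For integrality I would use the generality of $z$: the projection of a smooth hypersurface from a general point has full symmetric monodromy $S_d$, so the induced action on $a$-element subsets of a general fibre is transitive, and the étale locus $\Pi^{-1}(U)$ over the open set $U\subset\pr^m$ where $\Pi$ is étale is connected, hence (being smooth) irreducible. Since $\Pi$ is finite flat, $\Pi^{-1}(\pr^m\smallsetminus U)$ has dimension $<m$, so $\Pi^{-1}(U)$ is dense and $\Hilb^{[a]}(A/\pr^m)$ is irreducible; being lci (so $S_1$) and generically reduced (so $R_0$) it is reduced, hence integral, proving (1). For (2) and (3) the fibre $\Pi^{-1}([\ell])$ is the Hilbert scheme of length-$a$ subschemes of $\ell\cap A=\Spec\mathbb{C}[t]/(f)$ with $f=\prod_i(t-t_i)^{h_i}$. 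By the Chinese Remainder Theorem this splits into local Artinian factors $\mathbb{C}[t]/(t^{h_i})$, and $W=\sum_ik_ip_i$ corresponds to the ideal $(t^{k_i})$ in each factor, so the tangent space $\Hom_{\mathcal{O}}(I_W,\mathcal{O}_W)$ decomposes as $\bigoplus_i\Hom_{\mathbb{C}[t]/(t^{h_i})}\!\big((t^{k_i}),\mathbb{C}[t]/(t^{k_i})\big)$; a direct computation gives each summand dimension $\min(k_i,h_i-k_i)$, which is (2). Then (3) follows because $\Pi$ is flat over a smooth base, hence smooth at $[W]$ iff its zero-dimensional fibre is reduced there, iff the tangent space in (2) vanishes, iff each $k_i\in\{0,h_i\}$.

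For (4) take $m=1$, so $A\subset\pr^2$ is smooth of degree $d$ and genus $\binom{d-1}{2}$. I would first upgrade the lci curve $H:=\Hilb^{[a]}(A/\pr^1)$ to a \emph{smooth} curve. Because $z$ is general, $p_z\colon A\to\pr^1$ is simply branched, so every fibre is either reduced or has a single double point; in the notation of (2) this means $\sum_i\min(k_i,h_i-k_i)\le 1$ on every fibre. Regularity of $H$ at $[W]$ amounts to the Jacobian of the $d$ factorization equations having full rank $d$: the block of partials in the factorization variables is the Sylvester map $(\delta g,\delta h)\mapsto h\,\delta g+g\,\delta h$, whose corank is $\deg\gcd(g,h)=\sum_i\min(k_i,h_i-k_i)\le 1$, and the remaining column $-\partial_u f$ restores full rank when that corank is $1$. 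The main obstacle is exactly this transversality: one must verify that at a split double point $\partial_u f$ lies outside the image of the Sylvester map, which is precisely where simple branching (the discriminant of $p_z$ vanishing to first order) enters.

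Once $H$ is smooth, the genus follows from Riemann--Hurwitz for $\Pi\colon H\to\pr^1$ of degree $\binom{d}{a}$. Riemann--Hurwitz on $A$ shows $p_z$ has $d(d-1)$ simple branch points; over each, the fibre $\ell\cap A=2p_1+p_2+\cdots+p_{d-1}$ produces ramification only at the $\binom{d-2}{a-1}$ subschemes with $k_1=1$ (by (2) these are the non-reduced points of $\Pi^{-1}([\ell])$, hence simply ramified). Therefore
\[
2g(H)-2=-2\binom{d}{a}+d(d-1)\binom{d-2}{a-1},
\]
and the identity $d(d-1)\binom{d-2}{a-1}=\binom{d}{a}\,a(d-a)$ turns this into $2g(H)-2=\binom{d}{a}\big(a(d-a)-2\big)$, giving $g(H)=1+\tfrac12\binom{d}{a}(a(d-a)-2)$, as claimed.
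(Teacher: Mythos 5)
Your proposal is correct, and in the two technical parts, (1) and (4), it follows a genuinely different route from the paper. For (1) the paper never writes down a local model: it computes $\Hom_F(\mathscr{I}_W,\mathscr{O}_W)$ and $\Ext^1_F(\mathscr{I}_W,\mathscr{O}_W)$ on the fibers (Lemma \ref{local}), observes that the two dimensions agree, and invokes Koll\'ar's obstruction theory to conclude that every component has dimension $m$ and that $\Pi$ is an lci, flat, finite morphism; you instead present $\Hilb^{[a]}(A/\pr^m)$ locally as the scheme of monic factorizations $f=gh$, which makes the lci property, the dimension count, and (via miracle flatness) the flatness immediate. Your route is more elementary, but it rests on the scheme-theoretic identification of the relative Hilbert scheme with the factorization scheme --- i.e.\ that a $T$-flat rank-$a$ quotient of $\mathscr{O}_T[t]/(f)$ has kernel generated by a monic degree-$a$ divisor of $f$, by Cayley--Hamilton --- which you should state as a lemma. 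Parts (2)--(3) and the integrality argument (full monodromy of a general projection, transitivity on $a$-subsets, then reducedness via Serre's criterion) are essentially identical to the paper's; for the monodromy fact the paper cites Cukierman. For (4) the paper proves smoothness at the ramification points by a different device, Lemma \ref{local2}: the Hilbert--Chow morphism plus an analytic-local identification of $\Hilb^{[a]}(A/\pr^1)_{red}$ near $[W]$ with $A$ near its double point, which yields the ramification index $2$ at the same time; you instead run the Jacobian criterion on the factorization model using the corank of the Sylvester map. The transversality you flag as ``the main obstacle'' is in fact a two-line check in your own model: when $\gcd(g,h)=(t-t_1)$, the image of the Sylvester map is exactly the space of polynomials of degree $<d$ vanishing at $t_1$, and $\partial_u f(t_1)\neq 0$ because $A$ is smooth (at the double point $\partial_t f=0$, so smoothness of the hypersurface forces $\partial_u f\neq 0$); generality of $z$ is needed only to ensure at most one double point per fiber, so that the corank is at most $1$. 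Likewise your parenthetical ``hence simply ramified'' deserves one line: by flatness the fiber has length $\binom{d}{a}=\binom{d-2}{a}+2\binom{d-2}{a-1}+\binom{d-2}{a-2}$, so each of the $\binom{d-2}{a-1}$ non-reduced points has local multiplicity exactly $2$ (the paper gets this from Lemma \ref{local2} instead). With these two short verifications supplied, your Hurwitz computation coincides with the paper's and the proof is complete.
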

The following results will be used in the proof of Theorem \ref{genus_hilbert}.
\begin{lemma}\label{local}
Let $h$ be a positive integer, and set:  
$$\Lambda:=\frac{\mathbb{C}[t]}{(t^{h})}\quad\text{ and }\quad F:=\Spec \Lambda.$$ Let $W$ be the non-empty closed subscheme 
of $F$ with ideal $I:=t^{k}\Lambda\subseteq \Lambda$, where $k\in\{1,\dotsc,h\}$ is an integer. Then:
\begin{enumerate}[(1)]
\item $\dim_{\mathbb{C}} \Hom_{F}(\mathscr{I}_{W},\mathscr{O}_{W}) 
= \dim_{\mathbb{C}} \Ext^1_{F}(\mathscr{I}_{W},\mathscr{O}_{W})=\min(k,h-k)$;
\item
$\Hilb(F)$ has dimension zero,  $\Obs(W)=\Ext^1_{F}(\mathscr{I}_{W},\mathscr{O}_{W})$, and the Zariski tangent space of $\Hilb(F)$ at $[W]$ has dimension $\min(k,h-k)$;
\item
$\Hilb(F)$ is smooth at $[W]$ if and only if $W=F$.
\end{enumerate}
\end{lemma}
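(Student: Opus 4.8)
The plan is to reduce everything to a direct computation of $\Hom$ and $\Ext^1$ over the Artinian local ring $\Lambda=\mathbb{C}[t]/(t^h)$, which is a principal ideal ring. First I would record the module structure of the two sheaves involved: since the annihilator of $t^k$ in $\Lambda$ is $t^{h-k}\Lambda$, we have $t^k\Lambda\cong\Lambda/(t^{h-k})$, so that $\mathscr{I}_W\cong\Lambda/(t^{h-k})$ and $\mathscr{O}_W=\Lambda/(t^k)$ as $\Lambda$-modules. Because $F$ is affine with Artinian coordinate ring, sheaf $\Ext$ over $F$ coincides with module $\Ext$ over $\Lambda$, so the whole of part $(1)$ becomes a computation in the category of $\Lambda$-modules.

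The key tool is the $2$-periodic minimal free resolution of a cyclic module over $\Lambda$. For $1\le k\le h-1$ the module $\mathscr{I}_W\cong\Lambda/(t^{h-k})$ has resolution
$$\cdots\xrightarrow{\;t^{h-k}\;}\Lambda\xrightarrow{\;t^{k}\;}\Lambda\xrightarrow{\;t^{h-k}\;}\Lambda\longrightarrow\mathscr{I}_W\longrightarrow 0,$$
with differentials alternating between multiplication by $t^{h-k}$ and by $t^{k}$. Applying $\Hom_\Lambda(-,\mathscr{O}_W)$ gives the complex $\mathscr{O}_W\xrightarrow{t^{h-k}}\mathscr{O}_W\xrightarrow{t^{k}}\mathscr{O}_W\to\cdots$, in which multiplication by $t^k$ is identically zero on $\mathscr{O}_W=\Lambda/(t^k)$. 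I would then split into the two cases $h\ge 2k$ and $h<2k$: when $h\ge2k$ multiplication by $t^{h-k}$ is also zero, so both $\Ext^0$ and $\Ext^1$ equal $\mathscr{O}_W$ and have dimension $k=\min(k,h-k)$; when $h<2k$ multiplication by $t^{h-k}$ on $\Lambda/(t^k)$ has kernel and cokernel each of dimension $h-k$, giving $\dim\Hom=\dim\Ext^1=h-k=\min(k,h-k)$. The degenerate case $k=h$ (where $\mathscr{I}_W=0$ and $W=F$) gives $0=\min(k,h-k)$ trivially. This establishes $(1)$.

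For $(2)$ and $(3)$ the remaining ingredients are formal. Since $\Lambda$ is a principal ideal ring, every closed subscheme of $F$ is cut out by a unique ideal $t^k\Lambda$, so each length occurs for exactly one subscheme and $\Hilb(F)$ is set-theoretically finite, hence of dimension zero. The standard deformation theory of the Hilbert scheme identifies the Zariski tangent space at $[W]$ with $\Hom_F(\mathscr{I}_W,\mathscr{O}_W)$ and places the obstructions in $\Ext^1_F(\mathscr{I}_W,\mathscr{O}_W)$; combined with $(1)$ this yields the tangent-space dimension $\min(k,h-k)$ and the identification $\Obs(W)=\Ext^1_F(\mathscr{I}_W,\mathscr{O}_W)$. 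Finally, a scheme that is zero-dimensional at a point is smooth there precisely when its tangent space vanishes, i.e. when $\min(k,h-k)=0$; since $k\ge 1$ this forces $k=h$, equivalently $t^k\Lambda=(0)$ and $W=F$, proving $(3)$.

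The computation itself is elementary; the only points requiring care are the bookkeeping in the two cases $h\ge 2k$ and $h<2k$, the correct identification of the abstract module structures of $\mathscr{I}_W$ and $\mathscr{O}_W$, and citing the right deformation-theoretic statement for the tangent and obstruction spaces. I do not expect any genuine obstacle beyond this routine verification.
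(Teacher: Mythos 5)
Your part (1) is correct: the identification $\mathscr{I}_W\cong\Lambda/(t^{h-k})$, the $2$-periodic free resolution, and the two cases $h\ge 2k$, $h<2k$ all check out (as does the degenerate case $k=h$). This is essentially the same computation as the paper's, packaged differently: the paper applies $\Hom_{\Lambda}(-,\Lambda/I)$ to the single short exact sequence $0\to t^{h-k}\Lambda\to\Lambda\to I\to 0$ and compares terms of the resulting four-term exact sequence, which amounts to truncating your periodic resolution after one step. Part (3) is also fine once (2) is in place. The problem is in part (2).

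The gap is your justification of the identification $\Obs(W)=\Ext^1_{F}(\mathscr{I}_{W},\mathscr{O}_{W})$. What you actually have in hand is the containment $\Obs(W)\subseteq\Ext^1_{F}(\mathscr{I}_{W},\mathscr{O}_{W})$ (this is what ``places the obstructions in $\Ext^1$'' gives, and it is how $\Obs(W)$ is defined in \cite[Definition I.2.6]{kollar}), together with the numerical equality $\dim_{\mathbb{C}}\Hom=\dim_{\mathbb{C}}\Ext^1$ from (1). These two facts do \emph{not} imply $\Obs(W)=\Ext^1$: a priori $\Obs(W)$ could be a proper subspace, even zero. The missing step is to play the deformation-theoretic lower bound
$$\dim_{[W]}\Hilb(F)\;\ge\;\dim_{\mathbb{C}}\Hom_{F}(\mathscr{I}_{W},\mathscr{O}_{W})-\dim_{\mathbb{C}}\Obs(W)$$
(\cite[Theorems I.2.8.1 and I.2.8.4]{kollar}, which is exactly what the paper invokes) against the zero-dimensionality of $\Hilb(F)$ that you did prove: since the left-hand side is $0$, one gets $\dim_{\mathbb{C}}\Obs(W)\ge\dim_{\mathbb{C}}\Hom=\dim_{\mathbb{C}}\Ext^1$, and only then does the containment force equality. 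This is not a decorative point: the equality $\dim\Hom=\dim\Obs(W)$ is precisely what the paper later feeds into \cite[Theorems I.2.10.3 and I.2.10.4]{kollar} to get that every component of $\textup{Hilb}^{[a]}(A/\mathbb{P}^{m})$ has dimension $m$ and that $\Pi$ is a local complete intersection morphism, so it has to be genuinely established. The fix is one line, and all its ingredients are already on your page, but as written the deduction ``containment plus equal ambient dimensions implies equality'' is a non sequitur.
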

\begin{proof}[Proof of Lemma \ref{local}]
We have a short sequence of $\Lambda$-modules:
$$0 \longrightarrow t^{h-k} \Lambda \longrightarrow  \Lambda \longrightarrow
t^{k}\Lambda=I \longrightarrow 0,$$
where the second morphism is given by $1\mapsto t^{k}$, so that $I\cong \Lambda/t^{h-k} \Lambda$ as $\Lambda$-modules. Using this isomorphism, 
a direct computation shows that $\dim_{\mathbb{C}} \Hom_{\Lambda}(I,\Lambda/I)
=\min(k,h-k)$.

Applying 
the functor $\Hom_{\Lambda}(-,\Lambda/I)$
to the above sequence, 
and using the vanishing of
$\Ext^1_{\Lambda}(\Lambda,\Lambda/I)$,
we get the exact sequence of $\Lambda$-modules:
\begin{multline*}
0 \longrightarrow 
\Hom_{\Lambda}(I,\Lambda/I)
\longrightarrow
\Hom_{\Lambda}(\Lambda,\Lambda/I)\\
\longrightarrow
\Hom_{\Lambda}(t^{h-k}\Lambda,\Lambda/I)
\longrightarrow
\Ext^1_{\Lambda}(I, \Lambda/I)
\longrightarrow
0.
\end{multline*}
Similarly as before, observe that 
 $t^{h-k} \Lambda\cong \Lambda/I$ as $\Lambda$-modules. Moreover,  if $\pi\colon \Lambda\to \Lambda/I$ is the quotient map,
it is easy to
see that   $\pi^*\colon \Hom_{\Lambda}(\Lambda/I,\Lambda/I)\to\Hom_{\Lambda}(\Lambda,\Lambda/I)$  is an isomorphism of $\Lambda$-modules. Therefore 
$$\Hom_{\Lambda}(t^{h-k} \Lambda,\Lambda/I)\cong\Hom_{\Lambda}(\Lambda/I,\Lambda/I)\cong \Hom_{\Lambda}(\Lambda,\Lambda/I),$$ and
we obtain (1):
$$
\dim_{\mathbb{C}} \Hom_{\Lambda}(I,\Lambda/I)
= \dim_{\mathbb{C}} \Ext^1_{\Lambda}(I,\Lambda/I)=\min(k,h-k).$$

The Hilbert scheme of $F$ is supported on finitely many points, thus it has dimension zero. Recall that by \cite[Definition I.2.6]{kollar}, the obstruction space $\Obs(W)$ is a subspace of $\Ext^1_{F}(\mathscr{I}_{W},\mathscr{O}_{W})\cong \Ext^1_{\Lambda}(I,\Lambda/I)$. Then (2) follows from (1) and
 \cite[Theorems I.2.8.1 and I.2.8.4]{kollar}.

Finally, (3) follows from (2).
\end{proof}
\begin{lemma}\label{local2}
Let $p\colon A\to T$ be a finite morphism between smooth quasi-projective varieties. 
Let $F$ be a fiber of $p$, and $W\subseteq F$ a reduced subscheme of length $a$.
Let $\Pi\colon\textup{Hilb}^{[a]}(A/T)_{red}\to T$ be the natural morphism.
Suppose that $F$ has a unique non-reduced point at $z_1$, and that $z_1\in \textup{Supp}(W)$.
 
Then the Hilbert scheme $\Hilb^{[a]}(A/T)_{red}$ is smooth at $[W]$, 
and there exists a neighbourhood for the Euclidean topology $U\subset \Hilb^{[a]}(A/T)_{red}$ (respectively, $U_1\subset A$) of $[W]$
(respectively, $z_1$) and an isomorphism 
$\iota\colon U_1\cong U$ (of complex manifolds)
such that the diagram: 
$$\xymatrix{
{U_1}\ar[dr]_{p_{|U_1}}\ar[rr]^{\iota} & & U\ar[dl]^{\Pi_{|U}}\\
& T &
}
$$
commutes.
\end{lemma}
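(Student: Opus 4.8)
The whole statement is local around the finitely many points of $\Supp(W)$, so the plan is to describe precisely how a length-$a$ subscheme parametrized by a point of $\Hilb^{[a]}(A/T)_{red}$ near $[W]$ distributes itself over $\Supp(W)$, and to see that only the point over $z_1$ carries any deformation. Since the conclusion concerns a neighbourhood of $z_1$, I take $z_1\in\Supp(W)$ and write $\Supp(W)=\{z_1,w_2,\dots,w_a\}$, the $w_i$ ($i\ge 2$) being reduced points of $F$. Since $A$ and $T$ are smooth of the same dimension and $p$ is finite, at each reduced point $w_i$ the morphism $p$ is unramified, hence \'etale. Passing to the Euclidean topology, I would therefore choose pairwise disjoint open sets $A_1,\dots,A_a\subset A$ with $z_1\in A_1$ and $w_i\in A_i$, small enough that $p_{|A_i}\colon A_i\xrightarrow{\ \sim\ }T_i$ is an isomorphism onto an open subset of $T$ for $i\ge 2$, with inverse section $s_i\colon T_i\to A_i$.

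With these sections in hand I would build $\iota$ by hand. Shrinking $U_1:=A_1$ if necessary so that $p(U_1)\subset\bigcap_{i\ge2}T_i$ and so that for every $u\in U_1$ the points $u,s_2(p(u)),\dots,s_a(p(u))$ are pairwise distinct, the assignment
$$\iota(u):=\bigl[\{u,\,s_2(p(u)),\dots,s_a(p(u))\}\bigr]$$
defines a morphism $U_1\to\Hilb^{[a]}(A/T)_{red}$, with $\Pi\circ\iota=p_{|U_1}$ and $\iota(z_1)=[W]$. Thus the triangle commutes by construction, and $\iota$ is injective, since $u$ is recovered as the unique point of the cluster $\iota(u)$ lying in $U_1$.

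The step I expect to be the real work is to produce an inverse morphism, that is, to show that $\iota$ maps $U_1$ \emph{onto a neighbourhood} $U$ of $[W]$. For this I would use the universal family: over a small enough neighbourhood $U\subset\Hilb^{[a]}(A/T)_{red}$ of $[W]$ the universal subscheme $\mathscr{W}\to U$ is finite and flat of degree $a$, with evaluation $\mathscr{W}\to A$ whose image may be assumed contained in $\bigsqcup_i A_i$. Disjointness then forces a decomposition $\mathscr{W}=\bigsqcup_i\mathscr{W}_i$ with each $\mathscr{W}_i\to U$ finite flat; the degrees are locally constant and equal $1$ at $[W]$, hence are identically $1$. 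A finite flat morphism of degree $1$ is an isomorphism, so composing its inverse with the evaluation $\mathscr{W}_i\hookrightarrow A_i$ produces morphisms $U\to A_i$; for $i=1$ this gives $\rho\colon U\to U_1$ with $p\circ\rho=\Pi_{|U}$ and $\rho([W])=z_1$. A direct check using the \'etale sections $s_i$ (the point of any $W'\in U$ lying near $w_i$ must be $s_i(\Pi(W'))$) then shows $\iota\circ\rho=\mathrm{id}_U$ and $\rho\circ\iota=\mathrm{id}_{U_1}$, so that $\iota\colon U_1\xrightarrow{\ \sim\ }U$ is an isomorphism compatible with the maps to $T$.

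Finally, smoothness comes for free: $U\cong U_1$ is an open subset of the smooth variety $A$, so $\Hilb^{[a]}(A/T)_{red}$ is smooth at $[W]$, which completes the proof. Conceptually, the argument is nothing but the fact that the relative Hilbert functor of a disjoint union is the fibre product over $T$ of the individual factors, together with $\Hilb^{[1]}(A/T)=A$ and the observation that an \'etale factor contributes only a local section; the single nontrivial factor is $\Hilb^{[1]}$ near $z_1$, namely $(A,p)$ itself near $z_1$. The genuine obstacle, and the only place where some care is needed, is establishing the degree-$1$ splitting of the universal family near $[W]$, which rests on finite flatness of $\mathscr{W}\to U$ together with the separation of the support into disjoint clusters.
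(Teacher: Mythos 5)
Your proof is correct, and its heart is the same map as in the paper's proof: $\iota(u)=\bigl[\{u,\,s_2(p(u)),\dots,s_a(p(u))\}\bigr]$, assembled from local holomorphic sections of $p$ through the reduced points of $W$, so that only the point near $z_1$ carries any deformation. The genuine difference lies in how one sees that $\iota$ is an isomorphism onto a \emph{neighbourhood} of $[W]$. The paper constructs $\iota$ through the symmetric product $A^{(a)}$ and the Hilbert--Chow morphism, quoting that Hilbert--Chow is an isomorphism near the reduced subscheme $[W]$ and that $\Hilb^{[a]}(A)$ is smooth there; it then remarks that $\iota$ is an injective immersion and that the claim ``follows easily'', leaving the surjectivity onto a neighbourhood implicit. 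You dispense with Hilbert--Chow entirely: $\iota$ comes from the modular interpretation (a disjoint union of sections is a finite flat degree-$a$ family), and you build the inverse explicitly, splitting the universal family $\mathscr{W}=\bigsqcup_i\mathscr{W}_i$ over a small connected $U$ according to the disjoint supports $A_i$, observing that each $\mathscr{W}_i\to U$ is finite flat of degree $1$, hence an isomorphism, and setting $\rho\colon U\cong\mathscr{W}_1\hookrightarrow A_1$; the identities $\iota\circ\rho=\mathrm{id}$ and $\rho\circ\iota=\mathrm{id}$ then follow because any subscheme parametrized by $U$ has length one in each $A_i$ and its point in $A_i$, $i\ge 2$, is forced to be $s_i(\Pi(\cdot))$ (modulo the routine shrinking of $U_1$ to $\iota^{-1}(U)$, which you leave tacit). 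The paper's route is shorter because it borrows standard facts about the punctual Hilbert scheme; yours is more self-contained precisely on the step the paper compresses. Two further points in your favour: you state explicitly the hypothesis $z_1\in\Supp(W)$, which the statement and the paper's proof use tacitly and without which the lemma is false (then $\Pi$ would be \'etale at $[W]$ while $p$ is ramified at $z_1$, so no such $\iota$ could exist); and you take sections only through the $a-1$ reduced points of $W$, which repairs the paper's notational conflation of the number $r$ of points of the whole fibre with the degree $a$ of the cycle.
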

\begin{proof}
Recall that  the 
Hilbert-Chow morphism 
$\textup{Hilb}^{[a]}(A) \to A^{(a)}:=A^a/\mathbb{S}_a$ 
maps a zero-dimensional subscheme of length $a$ of $A$ to the associated 
effective $0$-cycle of degree $a$.
Notice that
$[W]$ is contained in the open subset of $\Hilb^{[a]}(A)$ where the Hilbert-Chow morphism is an isomorphism, and that 
$\Hilb^{[a]}(A)$ is smooth at $[W]$
since $[W]$ is reduced.

Let $V\subset T$ be an open neighbourhood of $p(z_1)$ for the Euclidean topology such that $p^{-1}(V)=U_1\cup\cdots \cup U_r$,
$U_i\cap U_j=\emptyset$ if $i\neq j$, $U_1\subset A$ is an open neighbourhood of $z_1$,
$p_{|U_i}\colon U_i\to V$ is an isomorphism (of complex manifolds) 
for $i\ge 2$, and $W \cap U_i\neq\emptyset$ if and
only if $1 \le i\le a$.

Let us consider the map $f\colon U_1\to A^a$ given by
$$f(z)=\left(z,(p_{|U_2})^{-1}(p(z)),\ldots,(p_{|U_a})^{-1}(p(z))\right).$$
Then $f$ is a holomorphic immersion. Moreover, for every $z\in U_1$, the points 
$z,(p_{|U_2})^{-1}(p(z)),\ldots,(p_{|U_a})^{-1}(p(z))\in A$
are pairwise distinct, so that the composition of $f$ with the quotient map $A^a\to A^{(a)}$ is still an immersion. 
This yields a holomorphic immersion
$$\iota\colon U_1 \longrightarrow \textup{Hilb}^{[a]}(A),$$
such that  $U:=\iota(U_1)\subset \textup{Hilb}^{[a]}(A/T)_{red}$. Moreover, $\iota$ is injective, $\iota(z_1)=[W]$, and
$p_{|U_1}=\Pi_{|U}\circ \iota$. Our claim follows easily.
\end{proof}
\begin{proof}[Proof of Theorem \ref{genus_hilbert}]
Let $\ell\subset\pr^{m+1}$ be a line passing through $z$, and set $F:=\ell\cap A$, so that 
$F$ is a zero-dimensional subscheme of $\ell\smallsetminus \{z\}\cong\mathbb{A}^1$. We have:
 $$\Pi^{-1}([\ell])=\Hilb^{[a]}(F);$$ 
in particular $\Pi$ is a finite morphism, 
and $\dim\textup{Hilb}^{[a]}(A/\mathbb{P}^{m}) \le m$.

Let $[W]\in \textup{Hilb}^{[a]}(A/\mathbb{P}^{m})$ be a point over 
$[\ell] \in \mathbb{P}^{m}$. Applying  Lemma \ref{local} to every connected component of $\Pi^{-1}([\ell])$, we get (2) and (3), and also that
$\dim_{\mathbb{C}} \Hom_{F}(\mathscr{I}_{W},\mathscr{O}_{W}) 
= \dim_{\mathbb{C}} \Obs(W)$.
 Thus, by \cite[Theorems I.2.10.3 and I.2.10.4]{kollar},
any irreducible component of 
$\textup{Hilb}^{[a]}(A/\mathbb{P}^{m})$
through $[W]$ has dimension $m$,
and $\Pi$ is a local complete intersection morphism. In particular, $\textup{Hilb}^{[a]}(A/\mathbb{P}^{m})$
is a local complete intersection scheme, and $\Pi$ is a flat finite morphism. 

By (3),  $\Pi$ is \'etale over $[\ell]$ if and only if $\ell\cap A$ is reduced, \emph{i.e.} $p_z$ is \'etale over $[\ell]$. Therefore 
$\textup{Hilb}^{[a]}(A/\mathbb{P}^{m})$ is generically smooth over $\mathbb{P}^{m}$. In particular, $\textup{Hilb}^{[a]}(A/\mathbb{P}^{m})$ is generically reduced and hence reduced as it is a Cohen-Macaulay scheme.

\medskip

We proceed to show that the scheme $\textup{Hilb}^{[a]}(A/\mathbb{P}^{m})$ is irreducible. This follows from the fact that since $z$ is general, 
 the monodromy group
of the projection $p_z\colon A \to \mathbb{P}^m$ is the whole symmetric group 
$\mathbb{S}_d$, see \cite[Proposition 2.3]{cukierman}.

Let $U \subset \mathbb{P}^m$ be a dense open subset such that $p_z$ and $\Pi$ are \'etale over $U$.  
Set $A_0:=p_z^{-1}(U)\subseteq A$ and $H_0:=\Pi^{-1}(U)=\textup{Hilb}^{[a]}(A_0/U)$. 
Since $H_0$ is dense in $\textup{Hilb}^{[a]}(A/\mathbb{P}^{m})$, and $H_0$ is smooth, we are reduced to show that $H_0$ is connected. 

Notice that $H_0$ is a closed subscheme of $\Hilb^{[a]}(A_0)$, and that every $[W]$ in $H_0$ is a reduced subscheme of $A_0$, so that 
$H_0$ is contained in the open subset of  $\Hilb^{[a]}(A_0)$ where the Hilbert-Chow morphism 
$\textup{Hilb}^{[a]}(A_0) \to (A_0)^{(a)}$
is an isomorphism.

Let 
$[W_1]$ and $[W_2]$ be two points in $H_0$ that map to a given point in $U$. Since any irreducible component of $H_0$ maps onto $U$, it is enough to prove that 
there is a path in $H_0$ joining $[W_1]$ to $[W_2]$.

Let $[\wi{W}_i]\in (A_0)^a$ mapping to $[W_i]$. Here, we are considering the natural map 
$(A_0)^a \to (A_0)^{(a)}$, composed with the inverse of the Hilbert-Chow morphism. 
Since 
the monodromy group
of $p_z$ is the symmetric group 
$\mathbb{S}_d$, there is a
path $\gamma\colon [0,1]\to (A_0)^a$ joining $[\wi{W}_1]$ to $[\wi{W}_2]$, such that the points in $\gamma(t)$ are distinct and contained in a fiber of $p_z$, for every $t\in[0,1]$ .
This yields a path joining
$[W_1]$ to $[W_2]$ in $H_0$, and proves that   
$\textup{Hilb}^{[a]}(A/\mathbb{P}^{m})$ is integral.

\medskip

Finally, suppose that $m=1$. 
We show that $\textup{Hilb}^{[a]}(A/\mathbb{P}^1)$ is a smooth curve of genus 
$$g=1+\frac{1}{2}\binom{d}{a}\left(a(d-a)-2\right).$$

Notice that since the projection $p_z\colon A\to\pr^1$ is general, 
there are precisely $d(d-1)$ 
non-reduced fibers, and
every non-reduced fiber contains just one non-reduced point, with multiplicity 2. 

Let $[W]\in \textup{Hilb}^{[a]}(A/\mathbb{P}^1)$ be a point over
$[\ell]\in\pr^1$. By (3), either
$\Pi$ is \'etale at $[W]$, or
$\ell\cap A$ has a double point, 
$W$ is reduced, and contains this point in its support. 
Note that there are exactly $\binom{d-2}{a-1}$ such $[W]$'s, and that $\Pi$ has ramification index 2 at any of these points by Lemma \ref{local2}.

If $\Pi$ is \'etale at $[W]$, then $\textup{Hilb}^{[a]}(A/\mathbb{P}^1)$ is obviously smooth at $[W]$. Otherwise, $\textup{Hilb}^{[a]}(A/\mathbb{P}^1)$ is  smooth at $[W]$ by Lemma \ref{local2}. This shows that $\textup{Hilb}^{[a]}(A/\mathbb{P}^1)$ is a smooth curve.

By the Hurwitz formula, we have:
$$2g-2=-2\binom{d}{a}+d(d-1)\binom{d-2}{a-1}=\binom{d}{a}(a(d-a)-2).$$
This completes the proof of the theorem.
\end{proof}
\begin{remark}
The same proof shows that for \emph{every} $z\in\pr^{m+1}\smallsetminus A$, the  relative Hilbert scheme $\textup{Hilb}^{[a]}(A/\mathbb{P}^{m})$
is a reduced local complete intersection scheme of dimension $m$, and that the natural morphism 
$\Pi\colon\textup{Hilb}^{[a]}(A/\mathbb{P}^{m})\to \mathbb{P}^{m}$
is flat and finite of degree $\binom{d}{a}$. Moreover, (2) and (3) hold true.
\end{remark}
We need also a slightly more general version of the previous construction, as follows. 
Let $A\subset\pr^{m+1}$ be a smooth hypersurface of degree $d\geq 1$, and fix $a\in\{1,\dotsc,d\}$.
Set 
$$\mathcal{G}:=\left\{[\ell]\in G(1,m+1)\,|\,\ell\text{ is not contained in $A$}\right\},$$ with its universal family
$\mathcal{U}:=\{([\ell],z)\in\mathcal{G}\times\pr^{m+1}\,|\,z\in\ell\}$. 
Let us consider the intersection:
$$\mathcal{I}:=\mathcal{U}\cap (\mathcal{G}\times A).$$
The induced morphism $\mathcal{I}\to\mathcal{G}$ is finite and flat, of degree $d$; the fiber over a line $[\ell]$ is $\ell\cap A$. The relative Hilbert scheme  
$\textup{Hilb}^{[a]}(\mathcal{I}/\mathcal{G})$
parametrizes pairs $(\ell,W)$ where $\ell\subset\pr^{m+1}$ is a line not contained in $A$, and $W$ is a subscheme of length $a$ of $\ell\cap A$.
\begin{thm}\label{grass}
The Hilbert scheme $\textup{Hilb}^{[a]}(\mathcal{I}/\mathcal{G})$ is an integral scheme of dimension $2m$.
\end{thm}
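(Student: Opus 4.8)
The plan is to follow verbatim the strategy of the proof of Theorem \ref{genus_hilbert}, replacing the base $\pr^m$ (the lines through a fixed general point $z$) by the larger base $\mathcal{G}$ (all lines not contained in $A$). Here $\mathcal{G}$ is a dense open subset of the Grassmannian $G(1,m+1)$, hence an irreducible quasi-projective variety of dimension $2m$; this is the source of the claimed dimension, since $\Pi\colon\textup{Hilb}^{[a]}(\mathcal{I}/\mathcal{G})\to\mathcal{G}$ will be shown to be finite.

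First I would analyze the local structure of $\Pi$. Since $\mathcal{I}\to\mathcal{G}$ is finite flat of degree $d$ with fibre $\ell\cap A$ over $[\ell]$, the fibre of $\Pi$ over $[\ell]$ is $\textup{Hilb}^{[a]}(\ell\cap A)$, a finite scheme; hence $\Pi$ is finite and $\dim\textup{Hilb}^{[a]}(\mathcal{I}/\mathcal{G})\le 2m$. For a point $[W]$ over $[\ell]$, applying Lemma \ref{local} to each connected component of $\ell\cap A$ gives $\dim_{\mathbb{C}}\Hom_{\ell\cap A}(\mathscr{I}_W,\mathscr{O}_W)=\dim_{\mathbb{C}}\Obs(W)$. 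By \cite[Theorems I.2.10.3 and I.2.10.4]{kollar} this forces every irreducible component through $[W]$ to have dimension $2m=\dim\mathcal{G}$ and $\Pi$ to be a local complete intersection morphism; in particular $\textup{Hilb}^{[a]}(\mathcal{I}/\mathcal{G})$ is a Cohen--Macaulay scheme and $\Pi$ is flat and finite of degree $\binom{d}{a}$. Next I would establish reducedness exactly as before: by the smoothness criterion of Lemma \ref{local}(3), $\Pi$ is \'etale over $[\ell]$ precisely when $\ell\cap A$ is reduced, and since $A$ is smooth a general line meets it transversally in $d$ distinct points, so the locus $U:=\{[\ell]\in\mathcal{G}\,|\,\ell\cap A\text{ reduced}\}$ is dense open in $\mathcal{G}$ and $\Pi$ is \'etale over it. Thus $\textup{Hilb}^{[a]}(\mathcal{I}/\mathcal{G})$ is generically smooth, hence generically reduced, and being Cohen--Macaulay it is reduced.

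The main point, and the only step needing genuine input, is irreducibility, which I would again deduce from monodromy. I claim the monodromy group of the degree-$d$ cover $\mathcal{I}\to\mathcal{G}$ is the full symmetric group $\mathbb{S}_d$. Indeed, fixing a general $z\in\pr^{m+1}\smallsetminus A$, the lines through $z$ form a sub-$\pr^m\subset\mathcal{G}$, over which $\mathcal{I}\to\mathcal{G}$ restricts to the projection $p_z\colon A\to\pr^m$, whose monodromy is already $\mathbb{S}_d$ by \cite[Proposition 2.3]{cukierman} (as used in Theorem \ref{genus_hilbert}); hence the monodromy of the larger cover contains, and therefore equals, $\mathbb{S}_d$. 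With this in hand the path-connectedness argument of Theorem \ref{genus_hilbert} applies without change: over $U$ the scheme $H_0:=\Pi^{-1}(U)$ lies in the locus of $\textup{Hilb}^{[a]}(\mathcal{I}|_U)$ where the Hilbert--Chow morphism is an isomorphism, and any two points $[W_1],[W_2]\in H_0$ over the same $[\ell]$ are joined by a path induced by a monodromy path in $(\mathcal{I}|_U)^a$ with distinct coordinates lying in a common fibre. Since $H_0\to U$ is finite \'etale over the irreducible base $U$, every component of $H_0$ dominates $U$, and this shows $H_0$ is connected; being smooth and dense in $\textup{Hilb}^{[a]}(\mathcal{I}/\mathcal{G})$, it follows that $\textup{Hilb}^{[a]}(\mathcal{I}/\mathcal{G})$ is integral of dimension $2m$. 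The only nonroutine verification is the monodromy claim, and it reduces immediately to the already-cited case of projection from a general point, so I expect no serious obstacle beyond bookkeeping.
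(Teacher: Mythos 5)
Your proposal is correct and takes essentially the same approach as the paper: the paper proves Theorem \ref{grass} by simply invoking ``the same proof of Theorem \ref{genus_hilbert}'', which is precisely the adaptation you carry out (local analysis via Lemma \ref{local}, Koll\'ar's results for the lci/flatness and dimension count over the $2m$-dimensional base $\mathcal{G}$, generic reducedness, and connectedness via monodromy). Your explicit reduction of the monodromy claim for $\mathcal{I}\to\mathcal{G}$ to Cukierman's result, by restricting the cover to the sub-$\pr^m\subset\mathcal{G}$ of lines through a general point $z$, is exactly the right way to justify the one step that genuinely needs modification.
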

\begin{proof}
The proof is very similar to that of Theorem \ref{genus_hilbert}, 
and so we leave some details to the reader. First, one shows that 
$\textup{Hilb}^{[a]}(\mathcal{I}/\mathcal{G})$ is a reduced local complete intersection scheme
of dimension $2m$ equipped with a finite flat morphism   
$\textup{Hilb}^{[a]}(\mathcal{I}/\mathcal{G}) \to \mathcal{G}$. 

We remark that if $z\in\pr^{m+1}$ is a general point, and $P:=\{[\ell]\in\mathcal{G}\,|\,z\in\ell\}$, then $P\cong\pr^m$, and the inverse image of $P$ in $\textup{Hilb}^{[a]}(\mathcal{I}/\mathcal{G})$ is the relative Hilbert scheme $\textup{Hilb}^{[a]}(A/\mathbb{P}^{m})$ of the projection $p_z\colon A\to\pr^m$ from $z$. 
Then, the same argument used in the proof of Theorem \ref{genus_hilbert} shows that
$\textup{Hilb}^{[a]}(\mathcal{I}/\mathcal{G})$ is irreducible. This completes the proof of the theorem.
\end{proof}
\begin{proof}[Proof of Theorem \ref{app}]
Let $X$ be as in Example \ref{equiv}, with $n=m+2$. Then the statement follows from Proposition \ref{examples}, Theorem \ref{main}, and Theorem \ref{genus_hilbert}.
\end{proof}
\providecommand{\bysame}{\leavevmode\hbox to3em{\hrulefill}\thinspace}
\providecommand{\MR}{\relax\ifhmode\unskip\space\fi MR }
\providecommand{\MRhref}[2]{%
  \href{http://www.ams.org/mathscinet-getitem?mr=#1}{#2}
}
\providecommand{\href}[2]{#2}

\end{document}